\documentclass[a4paper,11pt]{article}
\usepackage{graphicx} 
\usepackage{amsmath,amssymb,amsthm,amsfonts,amstext,amsbsy,amscd,color,dsfont}
\usepackage{subcaption}
\graphicspath{{fig_report/}} 
\usepackage{float}
\usepackage{makecell}
\usepackage[utf8]{inputenc}
\usepackage{algorithm}
\usepackage{algorithmic}
\usepackage{textcomp}
\usepackage{stmaryrd}
\usepackage{accents}
\usepackage{hyperref}
\usepackage{tikz}
\usetikzlibrary{arrows.meta, positioning,calc,bending} 

\usepackage{cases}

\newcommand{\dd}{
		\mathop{}\mathopen{}\mathrm{d}
	}
 \usepackage{verbatim}
 
\newtheorem{thm}{Theorem}[section]
\newtheorem{assumption}{Assumption}

\newcommand{\R}{\mathbb{R}}

\newcommand{\no}{n_1}
\newcommand{\nt}{n_2}
\newcommand{\kd}{d}
\newcommand{\gamo}{c}
\newcommand{\gamt}{\tilde{c}}
\newcommand{\gamtot}{c_{\text{tot}}}

\newcommand{\CS}{\eta}
\newcommand{\CSNS}{\eta_{1}}
\newcommand{\CSS}{\eta_{2}}
\newcommand{\CtotS}{c_2^{\text{tot}}}
\newcommand{\CtotNS }{c_1^{\text{tot}}}
\newcommand{\CDNS}{c_{1}}
\newcommand{\CDS}{c_2}
\newcommand{\CDSNS}{\tilde{c}_{1}}
\newcommand{\CDSS}{\tilde{c}_{2}}
\newcommand{\ks}{k}

\newcommand{\one}{\mathds{1}}

\usepackage[top=2.5cm, bottom=3cm, left=3cm , right=3cm]{geometry}

\newcommand{\footremember}[2]{%
    \footnote{#2}
    \newcounter{#1}
    \setcounter{#1}{\value{footnote}}%
}
\newtheorem{definition}{Definition}[section]
\newtheorem{lemma}{Lemma}[section]
\newtheorem{prop}{Proposition}[section]
\newtheorem{remark}{Remark}[section]
\newtheorem{corollary}{Corollary}[section]
\begin{document}

\title{Stability analysis and long-time convergence of a partial differential equation model of two-phase ageing.}
\author{Luce Breuil \footremember{CMAP}{CMAP, CNRS, École polytechnique, Institut Polytechnique de Paris, Inria, Palaiseau, France; E-mail : luce.breuil@polytechnique.edu}}
\date{ }
\maketitle

\begin{abstract}
   Recent biological evidence suggests the presence of a two-phase ageing process in several species. We introduce a system of two age-structured partial differential equations (PDE) representing two phases of ageing of a wild population. The model includes a coupling of both equations through birth and transition between phases and non-linearities due to competition. We show the existence, positivity and uniqueness of weak solutions in a general setting. For a simplified system of ordinary differential equations (ODE), we show existence and uniqueness of a strictly positive steady state attracting all trajectories. We study another simplification, a coupled PDE-ODE model, for which we prove existence, uniqueness and local asymptotic stability of a strictly positive steady state. Under further assumptions, but without assuming weak non-linearities, we show the global asymptotic stability of that steady state. The uniqueness of steady states and absence of oscillations in these systems show that the proportion of individuals in each phase at equilibrium is a unique feature of the model. This paves the way to ecological applications as the experimental measure of such a proportion could help gain some insight on the health of a wild population.
\end{abstract}

\noindent \textbf{Keywords :}  Age-structured equations, General Relative Entropy, Stability analysis, Long-time asymptotics,  Aging, Smurf phenotype.

\section{Introduction}
Although ageing is traditionally modeled as a continuous decay of an organism throughout its life, recent biological evidence suggests the presence of two consecutive ageing phases in several species \cite{rera_intestinal_2012,tricoire_new_2015,ageing_cell}. The individuals, which are all initially healthy, systematically go through an irreversible state of systemic failure before dying. This phase preceding death is called the Smurf phase \cite{rera_intestinal_2012} and can be easily detected by monitoring certain health indicators.  
Following these biological discoveries, we wish to study the mathematical properties of this new framework of ageing through a system of two coupled age-structured non-linear transport equations modelling a population of non-Smurf (healthy) and Smurf (failing) individuals. 

Age-structured transport equations are a classical way of modelling a population, the most notable of these equations are the linear renewal equation known as the McKendrick-von Forster equation (see e.g \cite{physio_equations_diek_joh}) and the non-linear Gurtin-McCamy equation \cite{gurtin1974non}.
Few articles study systems of equations representing several phases of ageing of a same population, and the presence of discrete age groups often replaces a continuous age variable (see e.g. \cite{CARRILLO2007137, Qualit_prey_pred_stage_DU_08,wikan2012nonlinear}). Most of these stage-structured population models consider a transition happening at a fixed age \cite{Ackleh_amphi_09,cushing1991juvenile,Farkas_asympt_08,Liu2017644}, which allows the system to be reduced to a single delay differential equation  \cite{KOSTOVA199965,review_Robertson_18}.
Another ingredient that is rarely present in age-structured population models is the assumption that the transition rate is density dependent, as what can be found for different differentiating cell models such as hematopoiesis \cite{ Adimy_Crauste_hema_05,Doumic_Leukemia_09,Hema_Kou_Adimy_Ducrot_2009} .
The juvenile-adult ODE models that assume a density dependent transition rate interpreted it as a maturation rate that is negatively impacted by the density-dependence, unlike competition \cite{Baer_bifurc_06,Changguo201274,KOSTOVA199965}.

To our knowledge, the only existing paper studying a PDE system directly inspired by the Smurf phenotype is \cite{meleard_roget_bd}. They present a general model where an individual is characterised by two traits : its fertility period and its survival period. They show that the model converges in time to a configuration where both periods are consecutive and non-overlapping, thus creating two distinct phases and motivating the evolutionary emergence of a two-phase ageing framework. The discontinuity of ageing being captured through the individual traits, their model does not include a transition from one phase to the next. 

 In this article, we wish to determine whether a system of equations modelling two life phases of a same population, which is a generalised Lotka-Volterra model \cite{Hopf_YAN_19,Implication_Perasso_19} and shares similarities with SIR models \cite{Li_21_SIR}, can share some of their properties. In particular, does the system have several stable steady states? Can it present sustained oscillations? We also wish to compare the system with one-phase models with the perspective of understanding the potential evolutionary advantages of such a two-phase model, and the impact of the parameters on the dynamics of the system. The recent biological experimental observations in which the model is rooted allow for a novel mathematical model, for which the behavior has not yet been studied and with results that can be interpreted from a biological perspective. In particular, knowing to what extent the proportion of Smurf individuals of a population is a unique, stable feature of the model would help gain some insight on wether or not the experimental measure of a Smurf proportion in the wild yields some information on the health of a population. This could have applications in ecology, when monitoring the health of wild populations close to human constructions. \\

\textbf{Organisation of the paper.} In the following, we first prove general properties of existence, uniqueness and positivity of weak solutions in Section \ref{sec:gen_2tudy}. We then study two simplifications of the system, for which we are able to prove stronger results. First, in Section \ref{sec:ODE},  we study an ODE reduction of our model and show using tools from stability analysis and bifurcation theory that it  has a single strictly positive globally asymptotically stable steady state. Secondly, in Section \ref{sec:PDE_ODE}, we introduce a PDE-ODE model, for which we study general existence and local stability of steady states using semi-group theory. We also show some comparison results with one-phase models. Under further assumptions we show our main result in Theorem \ref{thm:conv_global} : every trajectory of the system converges to the non-trivial steady state. This result is shown by studying a renormalised version of the system and introducing a Lyapunov type function and remains true even without assuming weak non-linearities.

\section{Presentation of the model and existence of solutions}
\label{sec:gen_2tudy}
In this section, we introduce a PDE model of two-phase ageing and prove general properties of existence, uniqueness and positivity of weak solutions.

\subsection{Presentation of the model}
We consider an age-structured model, which describes the evolution of a population in two phases. Individuals are born in phase~1, for which the density of population of age $a$ at time $t$ is represented by $n_1(t,a)$, and transition to phase~2. The density of individuals in phase~2 is represented by $n_2(t,a)$. The age $a$ corresponds to the age of an individual in its current phase. For phase~1, it is equal to the chronological age as individuals are directly born in the first phase. For phase~2 however, this corresponds to the time spent since transition and the age of an individual in phase~2 is set to 0 upon transition. From a biological perspective, the time spent since transition is key in understanding to what extent the second phase is a strong predictor of death. In particular, previous statistical studies of two-phase ageing consider the age since transition as the main variable affecting the probability of death \cite{Breuil2026.02.18.706552,tricoire_new_2015}. 

A schematic representation of the model is shown in Figure \ref{fig:schema}. 

\begin{figure}[h]

\begin{tikzpicture}[
    box/.style={draw, rectangle, minimum width=3cm, minimum height=1.6cm},
    arrow/.style={->, thick},
    node distance=5cm
]

\node[box] (P1) {phase~1};
\node[box, right=of P1] (P2) {phase~2};

\draw[arrow]
    (P1.west) to[out=220,in=160,looseness=3]
    node[left] { birth $b(a)$}
    (P1.north west);

\draw[arrow]
    (P1.east) -- ++(5,0.25)
    node[midway, above] {\shortstack{$k(a)$\\natural transition}};

\draw[arrow]
    (P1.east) -- ++(5,-0.25)
        node[midway, below] {\shortstack{$\eta\big(S_1(t),S_2(t)\big)$\\ competition transition}};

\draw[arrow]
    (P2.north) to[out=120,in=60,looseness=0.5]
    node[above] { birth $\tilde b(a)$}
    (P1.north);

\draw[arrow]
    (P1.south) -- ++(0,-2)
    node[midway, right] {$\gamo \big(S_1(t),S_2(t)\big)$}
    node[below] {\shortstack{accidental\\ death}};

\draw[arrow]
    (P2.south) -- ++(-1.5,-2)
    node[midway, left] {$\kd(a)$}
        node[below] {\shortstack{natural \\ death}};

\draw[arrow]
    (P2.south) -- ++(1.5,-2)
    node[midway, right] {$\gamt \big(S_1(t),S_2(t)\big)$}
        node[below] {\shortstack{accidental\\ death}};

\end{tikzpicture}

\caption{Schematic representation of the model.}
\label{fig:schema}
\end{figure}

The system we consider is the following
\begin{align}
\label{syst:edp}
    \begin{cases}
        &\frac{\partial \no(t,a) }{\partial t} + \frac{\partial \no(t,a) }{\partial a} = -(\ks(a) + \CS(S_2(t),S_1(t)) + \gamo(S_2(t),S_1(t)))\no(t,a)\\
        &\no(0,t) = \int_0^{+\infty}\no(t,a)b(a)+\nt(t,a)\tilde{b}(a) \dd a\\
        &\frac{\partial \nt(t,a) }{\partial t} + \frac{\partial \nt(t,a) }{\partial a} = -(\kd(a) +\gamt(S_2(t),S_1(t)) )\nt(t,a)\\
        &\nt(0,t) = \int_0^{+\infty}\no(t,a)(\ks(a)+\CS(S_2(t),S_1(t)))\dd a\\
        &\no(0,.) = n_{1,0}(.) \in L^1(\R_+, \R_+),\quad \nt(0,.) = n_{2,0}(.) \in L^1(\R_+,\R_+).
    \end{cases}
\end{align}
where there exist two competition kernels $\psi_1, \psi_2$ such that 
\begin{equation}
     S_1(t) = \int_0^{+\infty} \psi_1(a) \no(t,a) \dd a \quad \text{and} \quad  S_2(t) = \int_0^{+\infty} \psi_2(a) \nt(t,a) \dd a.
    \label{eq:def_2}
\end{equation}
In this model, 
\begin{itemize}
\item $\no(t,a)$ represents the density of individuals in phase 1 of chronological age $a$ at time $t$.
\item $\nt(t,a)$ represents the density of individuals in phase 2 of age since transition $a$ at time $t$ (when an individual transitions from phase~1 to 2, its age since transition is set to 0).
\item $\ks$ and $\kd$ are respectively the natural rates of transition from phase~1 to 2 and death, which we assume to be age-dependent.
\item $b$ and $\tilde{b}$ are respectively the reproduction rates of individuals in phase~1 and 2. Biological evidence suggests that $\tilde{b} \ll b$, and we exploit similar assumptions in this article.
\item $\CS$ represents the competition affecting the transition from phase~1 to 2, $\gamo$ the competition causing accidental death of individuals in phase~1 without going through phase~2 and $\gamt$ the accidental death of individuals in phase~2. All these competition functions are assumed to depend only on $S_1$ and $S_2$, which are environmental factors linked to the size of each population. 
\end{itemize}

For ease of notation, we introduce a notation for the total competition affecting phase~1
\begin{equation*}
    \gamtot(.,.) = \gamo(.,.) + \CS(.,.).
\end{equation*}
We begin by introducing the main assumptions, which ensure that the system makes biological sense and that we model a population in which everyone transitions from phase~1 to 2 and dies.  

\begin{assumption}\label{assump:bio}
\begin{align*}
     &\ks \geq 0, \kd \geq 0, b\geq 0 , \tilde{b} \geq 0, \\
     &\int_0^{+\infty}\ks(a) \dd a = +\infty \qquad \int_0^{+\infty}\kd(a) \dd a = +\infty.
    \end{align*}
\end{assumption}

\begin{assumption}\label{assump:compet_0}
\begin{align*} 
     &\gamt, \gamo, \eta \in L^{\infty}_{\text{loc}}(\R^2),\\
     & \gamt(0,0) =\gamo(0,0) =  \CS(0,0) = 0,\\
    &\forall S_2, S_1 \geq 0, \gamt(S_1, S_2)\geq 0, \gamo(S_1, S_2)\geq 0 , \CS(S_1, S_2) \geq 0.
    \end{align*}
\end{assumption}

\begin{assumption}\label{assump:bounds}
\begin{align}
&b, \tilde{b} \in L^{\infty}(\R_+) \label{bound:b} \tag{i},\\
& \ks \in  L^{\infty}(\R_+). \label{bound:ks} \tag{ii}
\end{align}
\end{assumption}
In the following, we denote by $||.||_{\infty}$ the $L^{\infty}$ norm and $||.||_1$ the $L^1$ norm.

Finally, we introduce the classical non-extinction condition. 

\begin{assumption} \label{ass:for_lambda_0}
    \begin{equation*}
         1< \int_0^{+\infty} \Big(b(a) e^{-\int_0^a \ks(u) \dd u} + \tilde{b}(a)e^{-\int_0^a \kd(u) \dd u} \Big) \dd a< +\infty.
    \end{equation*}
\end{assumption}

\subsection{Eigenvalue problem for the linearised system}

We introduce the following eigenvalue problem for the eigenvalue $\lambda_0$ corresponding to $S_1 = S_2 = 0$, 

\begin{equation}
\begin{cases}
\label{eq:nns_0}
   & \frac{\dd \no^{0}}{\dd a}(a) +(\ks(a) +\lambda_0)\no^{0}(a) =0\\
   &\no^{0}(0) =  \int_0^{+\infty} (b(a) \no^{0}(a)+ \tilde{b}(a) \nt^0(a)) \dd a = 1 \\
    &-\frac{\dd \nt^{0}}{\dd a}(a) +(\kd(a) +\lambda_0)\nt^{0}(a) =0 \\
    &\nt^{0}(0) = \int_0^{+\infty} \ks(a) \no^{0}(a) \dd a.
    \end{cases}
\end{equation}
The associated adjoint equation is
\begin{equation}
\begin{cases}
\label{eq:phi_0}
   & -\frac{\dd \phi_1^{0}}{\dd a}(a) +(\ks(a) +\lambda_0)\phi_1^{0}(a) =\phi_1^{0}(0) b(a) + \phi_2^{0}(0)\ks(a), \quad \phi_1^{0} \geq 0 \\
    &-\frac{\dd \phi_2^{0}}{\dd a}(a) +(\kd(a) +\lambda_0)\phi_2^{0}(a) =\phi_1^{0}(0) \tilde{b}(a), \quad \phi_2^{0} \geq 0\\ 
    &\int_0^{+\infty} \phi_1(a) n^0_1(a) + \phi_2(a) n^0_2(a) \dd a = 1.
    \end{cases}
\end{equation}

We begin with an existence and boundedness result inspired from \cite{Perthame_Tumulari2008}. 

\begin{prop}\label{prop:ex_phi}
  Suppose Assumptions \ref{assump:bio} and \ref{ass:for_lambda_0} are verified. Then, systems \eqref{eq:nns_0} and \eqref{eq:phi_0} have unique solutions with $\lambda_0 >0$ and the solutions to \eqref{eq:phi_0} are positive. Provided Assumption \ref{assump:bounds}\eqref{bound:b} is also verified, the solutions to \eqref{eq:phi_0} are bounded. 
  Finally, if $b$ and $\tilde{b}$ are bounded from below by strictly positive constants $\underline{b}$ and $\underline{\tilde{b}}$ and  $\kd$ and $\ks$ are bounded, the solutions to \eqref{eq:phi_0} are also bounded from below by strictly positive constants. 
\end{prop}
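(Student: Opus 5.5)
The plan is to solve both systems explicitly by integrating along $a$, which reduces everything to a scalar characteristic equation in $\lambda$. I read the $\nt^0$ equation in \eqref{eq:nns_0} as the transport equation $\frac{\dd \nt^0}{\dd a}+(\kd+\lambda_0)\nt^0=0$, consistent with \eqref{syst:edp}.

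\textbf{Direct problem.} Write $K(a)=\int_0^a\ks$ and $D(a)=\int_0^a\kd$. Then
\[
\no^0(a)=e^{-\lambda a-K(a)},\qquad \nt^0(a)=\nt^0(0)\,e^{-\lambda a-D(a)},\qquad \nt^0(0)=\int_0^{+\infty}\ks\, e^{-\lambda a-K}\dd a .
\]
The boundary condition becomes $F(\lambda)=1$, where
\[
F(\lambda)=\int_0^{+\infty} b\,e^{-\lambda a-K}\dd a+\Big(\int_0^{+\infty}\ks\, e^{-\lambda a-K}\dd a\Big)\Big(\int_0^{+\infty}\tilde b\, e^{-\lambda a-D}\dd a\Big).
\]
By Assumption \ref{assump:bio}, $\int_0^\infty \ks e^{-K}=[-e^{-K}]_0^\infty=1$. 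Hence $F(0)$ equals the quantity in Assumption \ref{ass:for_lambda_0}, so $1<F(0)<\infty$. On $[0,\infty)$, $F$ is finite, continuous and strictly decreasing, by monotone and dominated convergence; strictness holds because $F(0)>0$ forces some integrand to be nontrivial. Dominated convergence also gives $F(\lambda)\to0$ as $\lambda\to\infty$.

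For $\lambda<0$ we have $F(\lambda)\ge F(0)>1$ wherever $F$ is finite, so no root lies there. Hence there is a unique root $\lambda_0>0$. The normalisation $\no^0(0)=1$ then fixes $(\no^0,\nt^0)$ uniquely.

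\textbf{Adjoint problem.} I would solve \eqref{eq:phi_0} backward from $+\infty$, which is the only choice compatible with the normalisation. Set $\alpha=\phi_1^0(0)$ and $\beta=\phi_2^0(0)$. Any solution is the particular solution plus $C\,e^{\int_0^a(\cdot+\lambda_0)}$. Pairing that exponential part with $\no^0$ (resp.\ $\nt^0$) gives a constant integrand, hence an infinite integral, so the normalisation forces $C=0$. This yields
\[
\phi_2^0(a)=\alpha\int_a^{+\infty}\tilde b(s)e^{-\int_a^s(\kd+\lambda_0)}\dd s,\qquad
\phi_1^0(a)=\int_a^{+\infty}\big(\alpha b(s)+\beta\ks(s)\big)e^{-\int_a^s(\ks+\lambda_0)}\dd s .
\]
Evaluating at $a=0$ gives two consistency relations. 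The second-equation relation gives $\beta=\alpha\int\tilde b e^{-\lambda_0 s-D}$. Substituting this into the first-equation relation reduces it to $\alpha=\alpha F(\lambda_0)$, which holds automatically. So the family of solutions is one-dimensional in $\alpha$, and the normalisation, whose left-hand side is $\alpha$ times a positive finite number, picks a unique $\alpha>0$. Positivity of $\phi_1^0$ and $\phi_2^0$ is then immediate from the formulas.

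I expect this step to be the main obstacle: justifying that the exponentially growing homogeneous parts are excluded and that the normalisation integral is finite and positive.

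\textbf{Bounds.} Under Assumption \ref{assump:bounds}\eqref{bound:b}, dropping $\kd$ in the exponent gives $\phi_2^0\le \alpha\|\tilde b\|_\infty/\lambda_0$. For $\phi_1^0$, dropping $\ks$ in the first part gives at most $\alpha\|b\|_\infty/\lambda_0$. For the second part, dropping $\lambda_0$ gives
\[
\beta\int_a^\infty \ks(s)e^{-\int_a^s\ks}\dd s=\beta\big(1-e^{-\int_a^\infty \ks}\big)\le\beta .
\]
For the lower bounds, assume $b\ge\underline b$, $\tilde b\ge\underline{\tilde b}$, and $\ks,\kd$ bounded. Then
\[
\phi_2^0\ge \alpha\,\underline{\tilde b}/(\|\kd\|_\infty+\lambda_0)>0,\qquad
\phi_1^0\ge \alpha\,\underline b/(\|\ks\|_\infty+\lambda_0)>0,
\]
which concludes the proof.
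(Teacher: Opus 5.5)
Your proposal is correct and follows essentially the same route as the paper. Both integrate \eqref{eq:nns_0} explicitly, use a strictly decreasing characteristic function that crosses $1$ exactly once, solve the adjoint backward from $+\infty$, and obtain the same upper and lower $L^\infty$ estimates. You are in fact more careful than the paper about $F(0)$, about $\lambda<0$, and about the automatic consistency relation $\alpha=\alpha F(\lambda_0)$.

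One step does not work as written: the exclusion of the homogeneous parts. The normalisation is a single condition on the sum. The two exponential pieces together contribute the constant integrand $C_1+C_2\,n_2^0(0)$, which vanishes when $C_1=-C_2\,n_2^0(0)$. So normalisation alone does not force $C_1=C_2=0$.

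To repair it, use the sign constraints in \eqref{eq:phi_0}:
\begin{itemize}
\item $e^{-\int_0^a(k+\lambda_0)}$ times the particular part of $\phi_1^0$ tends to $0$, since it is the tail of a convergent integral; the same holds for $\phi_2^0$.
\item Hence a negative $C_i$ makes $\phi_i^0$ eventually negative, so $C_1,C_2\ge 0$.
\item Since $n_2^0(0)=\int_0^{+\infty}k\,n_1^0\,\dd a>0$, the normalisation integral then diverges unless $C_1=C_2=0$.
\end{itemize}

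The finiteness of the normalisation integral, which you flagged, follows from Fubini. The integral equals $\int_0^{+\infty}s\,(\alpha b+\beta k)(s)\,e^{-\lambda_0 s-K(s)}\,\dd s+\alpha\,n_2^0(0)\int_0^{+\infty}s\,\tilde b(s)\,e^{-\lambda_0 s-D(s)}\,\dd s$. This is finite because $s\,e^{-\lambda_0 s}\le 1/(e\lambda_0)$, $\int_0^{+\infty}k\,e^{-K}\,\dd a=1$, and Assumption \ref{ass:for_lambda_0} bounds $\int_0^{+\infty} b\,e^{-K}\,\dd a$ and $\int_0^{+\infty}\tilde b\,e^{-D}\,\dd a$.
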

\begin{proof}
    We can explicitly solve \eqref{eq:nns_0} which yields for $a \geq 0$, 
\begin{align}
    &\no^0(a) = e^{-\int_0^a \ks(u) + \lambda_0 \dd u}\nonumber, \quad \nt^0(a) = \nt^0(0) e^{-\int_0^a \kd(u) + \lambda_0 \dd u}\nonumber, \quad \nt^0(0) = \int_0^{+\infty}\ks(a) \no^0(a) \dd a\nonumber,\\
    &1 = \int_0^{+\infty} b(a) e^{-\int_0^a (\ks(u) + \lambda_0) \dd u} +\Big(\int_0^{+\infty}\ks(u) e^{-\int_0^u (\ks(x) + \lambda_0) \dd x} \dd u\Big)\tilde{b}(a)e^{-\int_0^a (\kd(u) + \lambda_0) \dd u} \dd a\label{eq:lambda_0}.
\end{align}
The map $\lambda \mapsto  \int_0^{+\infty} b(a) e^{-\int_0^a (\ks(u) + \lambda) \dd u} +\Big(\int_0^{+\infty}\ks(u) e^{-\int_0^u (\ks(x) + \lambda) \dd x} \dd u\Big)\tilde{b}(a)e^{-\int_0^a (\kd(u) + \lambda) \dd u} \dd a$ is strictly decreasing in $\lambda$ and continuous on $\R_+$. 
With Assumption \ref{ass:for_lambda_0}, we have 
\begin{align*}
   & \lim_{\lambda \to 0}  \int_0^{+\infty} b(a) e^{-\int_0^a (\ks(u) + \lambda) \dd u} +\Big(\int_0^{+\infty}\ks(u) e^{-\int_0^u (\ks(x) + \lambda) \dd x} \dd u\Big)\tilde{b}(a)e^{-\int_0^a (\kd(u) + \lambda) \dd u} \dd a> 1,\\
    &  \lim_{\lambda \to +\infty } \! \int_0^{+\infty} b(a) e^{-\int_0^a (\ks(u) + \lambda) \dd u} \!+\! \Big(\int_0^{+\infty}\ks(u) e^{-\int_0^u (\ks(x) + \lambda) \dd x} \dd u\Big)\tilde{b}(a)e^{-\int_0^a (\kd(u) + \lambda) \dd u} \dd a \!= \! 0. 
\end{align*}
Hence, there is a unique strictly positive solution $\lambda_0$ to \eqref{eq:lambda_0}.
For the adjoint problem solution to system \eqref{eq:phi_0}, we have
\begin{align*}
    &\phi_1^{0}(a)\! =\!\Big( \phi_1^{0}(0)\int_a^{+\infty}\! b(u)e^{-\int_0^u (\ks(x)+\lambda_0) \dd x} \!\dd u \\
    & \qquad \qquad +\! \phi_2^{0}(0) \int_a^{+\infty} \ks(u) e^{-\int_0^u (\ks(x)+\lambda_0) \dd x} \dd u\Big)e^{\int_0^a (\ks(x)+ \lambda_0) \dd x} \leq \phi_1^{0}(0)\frac{||b||_{\infty}}{\lambda_0} + \phi_2^{0}(0) ,\\
    &\phi_2^{0}(a) = \phi_1^{0}(0) \int_a^{+\infty} \tilde{b}(u) e^{-\int_0^u (\kd(x)+\lambda_0) \dd x} \dd u \times e^{\int_0^a (\kd(x)+\lambda_0) \dd x} \leq \phi_1^{0}(0)\frac{||\tilde{b}||_{\infty}}{\lambda_0}. 
\end{align*}
With 
\begin{equation*}
    \phi_2^{0}(0) = \phi_1^{0}(0)\int_0^{+\infty} \tilde{b}(u)e^{-\int_0^u( \kd(x) +\lambda_0 )\dd x}  \dd u \leq \phi_1^{0}(0)\frac{||\tilde{b}||_{\infty}}{\lambda_0},
\end{equation*}
and $\phi^0_1(0)$ is fixed by the last condition in \eqref{eq:phi_0}. 
These solutions are therefore positive and bounded. Furthermore, under the assumption that $b$ and $\tilde{b}$ are bounded from below by strictly positive constants $\underline{b}$ and $\underline{\tilde{b}}$, and $\ks$ and $\kd$ are bounded, we have
\begin{align*}
    &\phi_1^{0}(a) \geq \frac{\underline{b}}{\lambda_0 + ||k||_{\infty}} \phi_1^{0}(0)\int_a^{+\infty} (\ks(u) + \lambda_0)e^{-\int_0^a (\ks(x) +\lambda_0)\dd x} \dd u = \frac{\underline{b}}{\lambda_0 + ||\ks||_{\infty}} \phi_1^{0}(0),\\
    & \phi_2^{0}(a) \geq  \frac{\underline{\tilde{b}}}{\lambda_0 + ||\kd||_{\infty}} \phi_1^{0}(0) \int_a^{+\infty} (\kd(x) + \lambda_0)e^{-\int_a^u( \kd(x)+\lambda_0) \dd x} \dd u \geq \frac{\underline{\tilde{b}}}{\lambda_0 + ||\kd||_{\infty}} \phi_1^{0}(0) . 
\end{align*}
 And the solutions to \eqref{eq:phi_0} are also bounded from below by strictly positive constants. 
\end{proof}

\subsection{Study of the linear system}\label{sec:gen_lin}

In order to study the existence of solutions to the non-linear system \eqref{syst:edp}, we need to first study the corresponding linear system. 
We follow the same steps as what is done in Chapter 3 of \cite{Perthame2007_transport} for the renewal equation. 

Consider the following system, where $S_1$ and $S_2$ are locally bounded functions of time, independent of $\no$ and $\nt$. 
\begin{align}
\label{syst:edp_lin}
    \begin{cases}
        &\frac{\partial \no(t,a) }{\partial t} + \frac{\partial \no(t,a) }{\partial a} = -(\ks(a) + \CS(S_1(t),S_2(t)) + \gamo(S_1(t),S_2(t)))\no(t,a)\\
        &\no(0,t) = \int_0^{+\infty}\no(t,a)b(a)+\nt(t,a)\tilde{b}(a) \dd a\\
        &\frac{\partial \nt(t,a) }{\partial t} + \frac{\partial \nt(t,a) }{\partial a} = -(\kd(a) +\gamt(S_1(t),S_2(t)) )\nt(t,a)\\
        &\nt(0,t) = \int_0^{+\infty}\no(t,a)(\ks(a)+\CS(S_1(t),S_2(t)))\dd a\\
        &\no(a,0) = n_{1,0}(a)  \quad \nt(a,0) = n_{2,0}(a), n_{1,0}, n_{2,0} \in (L^{1}(\R_+))^2.
    \end{cases}
\end{align}

\begin{definition}[Weak solution] \label{def:weak_2ol}
    A pair of functions $(\no, \nt) \in L^1_{\text{loc}}(\R_+ \times \R_+)^2$ satisfies system \eqref{syst:edp_lin} in a weak sense if $\int_0^{+\infty} b(a) |\no(t,a)| + \tilde{b}(a) |n(t,a)| \dd a$ and $ \int_0^{+\infty} (\ks(a) + \CS(S_1(t), S_2(t)))|\no(t,a)| \dd a \in L^1_{\text{loc}}(\R_+)$ and for any $T>0$, for all pairs of test functions $(\phi_1, \phi_2) \in C^1_{\text{comp}}([0,T] \times [0, \infty))^2$ such that $\phi_1(T,x) \equiv 0, \phi_2(T,x) \equiv 0$, we have
    \begin{align*}
        -\int_0^T &\int_0^{+\infty} \no(t,a) \Big( \frac{\partial \phi_1(t,a)}{\partial t} + \frac{\partial  \phi_1(t,a)}{\partial a} - (\ks(a) + \gamtot(S_1(t), S_2(t)))  \phi_1(t,a) \Big) \dd a \dd t \\
        & = \int_0^{+\infty}n_{1,0}(a) \phi_1(a,0) \dd a + \int_0^T  \phi_1(0,t) \int_0^{+\infty} (b(a) \no(t,a) + \tilde{b}(a) \nt(t,a)) \dd a. 
    \end{align*}
    And 
    \begin{align*}
        -\int_0^T &\int_0^{+\infty} \nt(t,a) \Big( \frac{\partial \phi_2(t,a)}{\partial t} + \frac{\partial  \phi_2(t,a)}{\partial a} - (\kd(a) + \gamt(S_1(t), S_2(t)))  \phi_2(t,a) \Big) \dd a \dd t \\
        & = \int_0^{+\infty}n_{2,0}(a) \phi_2(a,0) \dd a + \int_0^T  \phi_2(0,t) \int_0^{+\infty} (\ks(a) + \CS(S_1(t), S_2(t)))\no(t,a) \dd a. 
    \end{align*}
\end{definition}
We now prove the existence and positivity of solutions to \eqref{syst:edp_lin}, as well as a comparison principle, for which the proof is presented in the Appendix \ref{sec:app_lin}. 
\begin{thm}\label{thm:exis_lin}
  Assume \ref{assump:bio},  \ref{assump:compet_0} and \ref{assump:bounds} are verified and $S_1, S_2 \in L_{\text{loc}}^{\infty}(\R_+)$. Then, there is a unique weak solution $(\no, \nt) \in  C(\R_+; L^1(\R_+))^2$  to system \eqref{syst:edp_lin}. Furthermore if $(n_1, n_2)$ and $(\tilde{n}_1, \tilde{n}_1)$ are solutions to \eqref{syst:edp_lin} starting respectively from $(n_{1,0},n_{1,0}) $ and $(\tilde{n}_{1,0} , \tilde{n}_{2,0} )$, we have the following comparison principle 
   \begin{align*}
       n_{1,0} \leq  \tilde{n}_{1,0} \text{ and }n_{2,0} \leq  \tilde{n}_{2,0} \implies  \no(t,.) \leq  \tilde{\no}(t,.) \text{ and }\nt(t,.) \leq  \tilde{\nt}(t,.). 
   \end{align*}
   In particular, this means that the solutions to \eqref{syst:edp_lin} stay positive provided the initial conditions are positive.
   And 
   \begin{equation}
       \int_0^{+\infty}(|\no(t,a)| + |\nt(t,a)| )\dd a \leq e^{\max(||b||_{\infty},||\tilde{b}||_{\infty})t}\int_0^{+\infty}(|n_{1,0}(a)| + |n_{2,0}(a)|) \dd a. \label{eq:exp_bound_L1}
   \end{equation}
\end{thm}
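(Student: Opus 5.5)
Since $S_1,S_2$ are given functions in $L^\infty_{\text{loc}}(\R_+)$, the coefficients $\mu_1(t):=\gamtot(S_1(t),S_2(t))$, $\eta(t):=\CS(S_1(t),S_2(t))$ and $\mu_2(t):=\gamt(S_1(t),S_2(t))$ are nonnegative and locally bounded in time by Assumption \ref{assump:compet_0}. System \eqref{syst:edp_lin} is therefore a linear coupled renewal system. I would follow the strategy of Chapter 3 of \cite{Perthame2007_transport} and reduce it to a fixed point on the boundary fluxes. Set $B_1(t)=\no(t,0)$ and $B_2(t)=\nt(t,0)$. Integrating along characteristics gives, for $a>t$,
\[
\no(t,a)=n_{1,0}(a-t)e^{-\int_0^t (\ks(a-t+s)+\mu_1(s))\dd s},
\]
and for $a<t$,
\[
\no(t,a)=B_1(t-a)e^{-\int_{t-a}^t(\ks(s-t+a)+\mu_1(s))\dd s}.
\]
The analogous formulas hold for $\nt$ with $\kd,\mu_2,B_2$. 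Substituting these into the two boundary conditions yields a linear Volterra system $B=F+\mathcal{K}B$ on $[0,T]$. Here $F$ collects the initial-data terms, and $\mathcal{K}$ has kernels bounded by $\|b\|_\infty$, $\|\tilde b\|_\infty$ and $\|\ks\|_\infty+\|\eta\|_{L^\infty(0,T)}$, using Assumption \ref{assump:bounds}. The source $F$ lies in $L^1(0,T)$ because $n_{i,0}\in L^1$ and the rates are bounded.

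For existence and uniqueness, I would show that $\mathcal{K}$ is a contraction on $L^1(0,\tau)^2$ for $\tau$ small. This works because $\|\mathcal{K}B\|_{L^1(0,\tau)}\le C\tau\|B\|_{L^1(0,\tau)}$, with $C$ depending only on the bounds above. Since $\tau$ does not depend on the data, I would iterate on successive intervals to get a unique $B\in L^1_{\text{loc}}$. Defining $(\no,\nt)$ by the characteristic formulas then gives functions in $C(\R_+;L^1(\R_+))$: continuity follows from continuity of translations in $L^1$ and from $B\in L^1_{\text{loc}}$. Checking the weak formulation of Definition \ref{def:weak_2ol} is a change of variables along characteristics. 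For uniqueness among all weak solutions, I would take the difference of two solutions and test against solutions of the backward dual problem. Alternatively, I would show that any weak solution must satisfy the Duhamel representation, by testing with $\phi$ concentrated along characteristics. I expect this identification of arbitrary weak solutions with the mild ones to be the most delicate step; I would use the duality argument of \cite{Perthame2007_transport} adapted to the two components.

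The comparison principle and positivity come from monotonicity of the fixed-point map. All kernels of $\mathcal{K}$ and all coefficients in $F$ are nonnegative, so nonnegative data give $F\ge0$. The Picard iterates starting from $0$ then stay nonnegative and converge to $B$, hence $B\ge0$ and $\no,\nt\ge0$. By linearity, applying this to the difference of two solutions gives the ordering.

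For the bound \eqref{eq:exp_bound_L1}, I would first treat nonnegative data. Integrating the equations in $a$, which is justified by the mild formula, gives
\[
\frac{\dd}{\dd t}\int(\no+\nt)=B_1+B_2-\int(\ks+\mu_1)\no-\int(\kd+\mu_2)\nt .
\]
Because $\mu_1=\eta+\gamo\ge\eta$, the term $B_2$ is compensated by the loss $\int(\ks+\mu_1)\no$. What remains is at most $\int(b\no+\tilde b\nt)\le\max(\|b\|_\infty,\|\tilde b\|_\infty)\int(\no+\nt)$, and Gronwall's lemma gives the bound. For signed data, $|\no|,|\nt|$ are bounded by the solution started from $(|n_{1,0}|,|n_{2,0}|)$, using comparison and linearity. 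This yields \eqref{eq:exp_bound_L1} in general.
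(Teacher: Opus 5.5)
Your proposal is correct, but the fixed point is organised differently from the paper's.

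\textbf{The paper's route.} The paper never writes the characteristic formulas. It works in $C([0,T];L^1(\R_+))^2$ with a map $\mathcal S[m_1,m_2]$ in which only the birth condition of $\no$ is frozen at $(m_1,m_2)$; the boundary value of $\nt$ uses the freshly computed $\no$. It gets the contraction factor $3/4$ by integrating the distributional equations for $|\no-\tilde\no|$ and $|\nt-\tilde\nt|$. It deduces comparison from the ordering of $\mathcal S$ in the initial data ``by iteration''. It proves \eqref{eq:exp_bound_L1} by differentiating $\int(|\no|+|\nt|)$ directly.

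\textbf{What your route buys.} You instead contract on the boundary fluxes $(B_1,B_2)\in L^1(0,\tau)^2$. This gives an explicit mild representation. It also makes positivity and comparison immediate from nonnegative kernels. The paper's iteration step silently also needs $\mathcal S$ to be monotone in $(m_1,m_2)$, which holds since $b,\tilde b\ge0$. Your treatment of the $L^1$ bound makes explicit the cancellation $\int(\ks+\CS)\no-\int(\ks+\gamtot)\no=-\int\gamo\no\le 0$, which the paper's Gr\"onwall step uses without comment. The paper's route, in turn, stays at the PDE level and mirrors the difference estimates reused in the proof of Theorem \ref{them:ex}.

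\textbf{Uniqueness.} Here you aim higher than the paper. Its contraction only gives uniqueness of the fixed point of $\mathcal S$, and leaves implicit that every weak solution in the sense of Definition \ref{def:weak_2ol} is such a fixed point. Your duality plan is the right tool. However, with merely bounded rates the backward dual solutions are at best Lipschitz and are not compactly supported in $a$. They must therefore be regularised and truncated before they can serve as $C^1_{\text{comp}}$ test functions.

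\textbf{A shared tacit assumption.} Like the paper, you tacitly use $S_1,S_2\ge0$. Assumption \ref{assump:compet_0} only fixes the sign of $\CS,\gamo,\gamt$ at nonnegative arguments. Otherwise $\ks+\CS$ could be negative, and positivity could fail.
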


\begin{remark}  
As $k$ plays the role of a "birth" rate for phase~2 in our model, we need to assume its boundedness for mathematical reasons. However, this does not seem likely from a biological perspective, as it is also a "death" rate for individuals in phase~1. Assuming a bounded birth rate is the most classical way to determine the existence of weak solutions \cite{gurtin1974non, Perthame2007_transport,pruss1981equilibrium}. However, it can easily be replaced by assuming that the initial condition has a compact support as in \cite{Huang_HIV_12} and that the rates are locally bounded. 
\end{remark}

Finally, we state the following result, which shows that the solutions to the system when $\gamtot = \gamt = 0$ i.e without competition, in the linear and homogeneous case, converge to a fixed age distribution and for which the proof is presented in the Appendix \ref{sec:app_lin}. This results relies on general relative entropy (GRE) methods \cite{MICHEL20051235,Perthame2007_transport}. These methods allow to get results on long time convergence of the system, mainly for linear systems although some results tackle the non-linear case \cite{Pakdaman_09,perthame2025stronglynonlinearagestructured,Torres_2022,TUMULURI20111420}. 

\begin{thm}[Long time convergence]\label{thm:asymp_linear_b}
Under Assumptions \ref{assump:bio}, \ref{assump:bounds} and \ref{ass:for_lambda_0}, if $\gamtot \equiv \gamt \equiv 0$ and if there exists $\mu > 0$ such that $\forall a \geq 0$,
\begin{equation*}
    b(a) \geq \mu \frac{\phi^0_1(a)}{\phi^0_1(0)} \quad \text{and}  \quad \tilde{b}(a) \geq \mu \frac{\phi^0_2(a)}{\phi^0_2(0)},
\end{equation*}
where $\phi_1^0$ and $\phi_2^0$ are defined as in Proposition \ref{prop:ex_phi}. 
Then the weak solutions $(\no, \nt) \in  C(\R_+; L^1(\R_+))^2$ to \eqref{syst:edp_lin} verify
  \begin{equation*}
    \int_0^{+\infty} \!\Big(|h_1(t,a)| \phi^0_1(a) \!+ \!|h_2(t,a)| \phi^0_2(a) \Big)\dd a  \! \leq \! e^{-\mu t} \! \Big(\int_0^{+\infty} \!\Big( |h_1(a,0)| \phi^0_1(a)\!+\! |h_2(a,0)| \phi^0_2(a) \Big) \! \dd a \!\Big),
\end{equation*}
where $m^0 = \int_0^{+\infty} \no(a,0) \phi^0_1(a) + \nt(a,0) \phi^0_2(a) \dd a$ and
\begin{align*}
   &h_1(t,a) = e^{-\lambda_0 t} \no(t,a) - m^0n^0_1(a), \qquad h_2(t,a) = e^{-\lambda_0 t} \nt(t,a) - m^0n^0_2(a).
\end{align*}
\end{thm}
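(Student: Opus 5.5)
The plan is to follow the general relative entropy (GRE) argument of Chapter 3 of \cite{Perthame2007_transport}, adapted to the two-component system. Since $\gamtot \equiv \gamt \equiv 0$, the system \eqref{syst:edp_lin} is linear with time-independent coefficients. The renormalised functions $\tilde n_i(t,a) = e^{-\lambda_0 t} n_i(t,a)$ then solve the same system with an extra term $-\lambda_0 \tilde n_i$ on the right-hand side. The stationary profile $m^0(n_1^0,n_2^0)$ solves this renormalised system as well. Hence $h_1,h_2$ solve the renormalised linear system too, with boundary conditions
\begin{equation*}
h_1(t,0)=\int_0^{+\infty} (b h_1 + \tilde b h_2)\dd a, \qquad h_2(t,0)=\int_0^{+\infty} \ks h_1 \dd a .
\end{equation*}

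The first step is conservation. Testing the renormalised equations against $(\phi_1^0,\phi_2^0)$ and using the adjoint system \eqref{eq:phi_0}, I will show that
\begin{equation*}
\frac{\dd}{\dd t}\int_0^{+\infty} (h_1\phi_1^0 + h_2\phi_2^0)\dd a = 0 .
\end{equation*}
Concretely, after integrating by parts, the boundary terms $\phi_1^0(0)h_1(t,0)+\phi_2^0(0)h_2(t,0)$ cancel exactly against the source terms $\phi_1^0(0)b+\phi_2^0(0)\ks$ and $\phi_1^0(0)\tilde b$ of the adjoint equations. By the choice of $m^0$ and the normalisation in \eqref{eq:phi_0}, this quantity is $0$ at $t=0$, so it stays $0$ for all $t$. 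Rigorous manipulations are first done for smooth, compactly supported data, which is enough since the general case follows by the $L^1$ density and contraction estimates coming from Theorem \ref{thm:exis_lin}.

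The second step computes the derivative of $\int (|h_1|\phi_1^0+|h_2|\phi_2^0)\dd a$. Since $|h_i|$ satisfies the same transport equation, only boundary and source contributions survive:
\begin{equation*}
\frac{\dd}{\dd t}\int (|h_1|\phi_1^0+|h_2|\phi_2^0) = \phi_1^0(0)\Big(\Big|\int (b h_1+\tilde b h_2)\Big| - \int (b|h_1|+\tilde b|h_2|)\Big) + \phi_2^0(0)\Big(\Big|\int \ks h_1\Big|-\int \ks |h_1|\Big).
\end{equation*}
The last bracket is $\le 0$, so I drop it.

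The main obstacle is turning the first bracket into $-\mu\int(|h_1|\phi_1^0+|h_2|\phi_2^0)$. I will use the trick $|\int f| = |\int (f - \mu g)|$, which holds because $\int g = 0$ by the conservation law. Here I take $f = b h_1+\tilde b h_2$ and
\begin{equation*}
\mu g = \mu\Big(h_1\frac{\phi_1^0}{\phi_1^0(0)} + h_2 \frac{\phi_1^0(0)^{-1}\phi_2^0}{1}\Big)\cdot(\ldots),
\end{equation*}
with the weights chosen so that $\int g$ is proportional to $\int(h_1\phi_1^0+h_2\phi_2^0)=0$. The hypotheses $b \ge \mu\phi_1^0/\phi_1^0(0)$ and $\tilde b\ge \mu \phi_2^0/\phi_2^0(0)$ make the coefficients $b-\mu(\cdot)$ and $\tilde b - \mu(\cdot)$ nonnegative. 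Then
\begin{equation*}
\Big|\int (f-\mu g)\Big| \le \int (b|h_1|+\tilde b|h_2|) - \mu\int(\cdot)|h|,
\end{equation*}
and the first bracket is bounded by $-\mu\phi_1^0(0)\int(\ldots)$.

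The delicate point is matching normalisations. The hypothesis for $\tilde b$ uses $\phi_2^0(0)$, whereas the boundary term carries the factor $\phi_1^0(0)$. If that mismatch fails to give the weight $\phi_2^0$ exactly, I will combine with the $\ks$ bracket or rescale $\mu$ by $\min(1,\phi_2^0(0)/\phi_1^0(0))$. I expect that, after this bookkeeping, one obtains
\begin{equation*}
\frac{\dd}{\dd t}E\le -\mu E, \qquad E(t)=\int_0^{+\infty} (|h_1|\phi_1^0+|h_2|\phi_2^0)\dd a .
\end{equation*}
Gronwall's lemma then gives the claimed estimate.
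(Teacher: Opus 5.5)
Your route is the paper's own: it refers to the GRE proof of Theorem 3.5 in Perthame's book, i.e.\ renormalise, use the conservation law $\int_0^{+\infty}(h_1\phi_1^0+h_2\phi_2^0)\dd a=0$, differentiate $E$, and apply Gr\"onwall. Your first two steps and your formula for $\frac{\dd E}{\dd t}$ are correct.

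\textbf{The gap is the final step.} The only zero-mean quantity available is $h_1\phi_1^0+h_2\phi_2^0$, so your $g$ must be $(h_1\phi_1^0+h_2\phi_2^0)/\phi_1^0(0)$. The coefficient of $h_2$ is then $\tilde{b}-\mu\phi_2^0/\phi_1^0(0)$. The stated hypothesis $\tilde{b}\geq\mu\phi_2^0/\phi_2^0(0)$ makes this nonnegative only when $\phi_2^0(0)\leq\phi_1^0(0)$. But $\phi_2^0(0)/\phi_1^0(0)=\int_0^{+\infty}\tilde{b}(u)e^{-\int_0^u(\kd+\lambda_0)}\dd u$ can exceed $1$. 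Neither fallback repairs this:
\begin{itemize}
\item the $\ks$-bracket contains no $h_2$ and vanishes whenever $h_1$ has constant sign;
\item rescaling $\mu$ proves a weaker theorem, and your factor is inverted: the argument actually gives the rate $\mu\min(1,\phi_1^0(0)/\phi_2^0(0))$.
\end{itemize}

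In fact no bookkeeping can recover the rate $\mu$ under the hypothesis as written. Take $h_1\geq0\geq h_2$ with $\int h_1\phi_1^0=\int|h_2|\phi_2^0=E/2$, concentrated near ages $a_1,a_2$. This is an admissible $h(0)$: it comes from the datum $h(0)+m\,(n_1^0,n_2^0)$. The $\ks$-bracket then vanishes and
$\frac{\dd E}{\dd t}=-2\phi_1^0(0)\min\big(\int b h_1,\int\tilde{b}|h_2|\big)\approx-E\,\phi_1^0(0)\min\big(b(a_1)/\phi_1^0(a_1),\tilde{b}(a_2)/\phi_2^0(a_2)\big)$.
For a concrete case, take $\ks=\kd=1$, $b=1/2$, $\tilde{b}=\varepsilon$ on $[0,1)$ and $\tilde{b}=B$ on $[1,+\infty)$, with $B$ chosen so that $\lambda_0=1$. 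Then:
\begin{itemize}
\item $\phi_1^0\equiv\phi_1^0(0)$ and $\phi_2^0(0)=\frac32\phi_1^0(0)$;
\item for small $\varepsilon$, the largest $\mu$ allowed by the stated hypotheses is $3\varepsilon/B$;
\item letting $a_2\uparrow1$, the instantaneous rate above is only $2\varepsilon/B$.
\end{itemize}
So $E(t)\leq e^{-\mu t}E(0)$ fails for small $t$. The estimate holds, and your computation closes exactly with rate $\mu$, under the hypothesis $\phi_1^0(0)\tilde{b}\geq\mu\phi_2^0$, i.e.\ $\tilde{b}\geq\mu\phi_2^0/\phi_1^0(0)$, which is presumably what is intended.
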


\subsection{Existence and uniqueness of solutions in the general case}
\label{sec:ex_un}
In this section, we turn back to the full non-linear model \eqref{syst:edp} and rely on the results shown in Section \ref{sec:gen_lin} for the linear case to prove existence and uniqueness of solutions along with a priori bounds as is done in \cite{Perthame_Tumulari2008}. 
We introduce the following assumption. 
\begin{assumption}\label{assump:lip}
$\exists L>0, \forall S_{1}, S_{2},M_{1}, M_{2} \geq 0 ,$
    \begin{align*}
      &|\gamt(S_{1}, S_{2}) - \gamt(M_1, M_2)| \leq L(| S_{1}- M_{1}| +|S_{2} -M_{2}|), \\
    &|\gamtot(S_{1}, S_{2}) - \gamtot(M_1, M_2)| \leq L(| S_{1}- M_{1}| +|S_{2} -M_{2}|).
    \end{align*}
\end{assumption}
We begin with an existence result, for which the proof is postponed to the Appendix \ref{sec:app_ex}. 
\begin{thm}\label{them:ex}
 Suppose $\psi_1, \psi_2$ (as defined by \eqref{eq:def_2}) are positive and bounded. Under Assumptions \ref{assump:bio}, \ref{assump:compet_0}, \ref{assump:bounds} and \ref{assump:lip}, system \eqref{syst:edp} has a unique weak solution $(\no, \nt) \in  C(\R_+; L^1(\R_+))^2$.
\end{thm}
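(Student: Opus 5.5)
We will obtain the solution of \eqref{syst:edp} as a fixed point of the map that sends given environmental signals to the solution of the linear problem \eqref{syst:edp_lin}. Fix $T>0$ and consider the space $X_T = C([0,T];\R)^2$, or simply $L^\infty(0,T)^2$, of pairs $(S_1,S_2)$. Given $(S_1,S_2)\in X_T$, Theorem \ref{thm:exis_lin} provides a unique weak solution $(\no,\nt)\in C([0,T];L^1(\R_+))^2$ of \eqref{syst:edp_lin}. It is nonnegative by the comparison principle, and it satisfies the a priori bound \eqref{eq:exp_bound_L1}. We then define
\[
\Phi(S_1,S_2)(t) = \Big(\int_0^{+\infty}\psi_1(a)\no(t,a)\dd a,\ \int_0^{+\infty}\psi_2(a)\nt(t,a)\dd a\Big).
\]
Since the $\psi_i$ are bounded and nonnegative, \eqref{eq:exp_bound_L1} gives $0\le \Phi(S)\le R_T := \max(\|\psi_1\|_\infty,\|\psi_2\|_\infty)e^{\max(\|b\|_\infty,\|\tilde b\|_\infty)T}\|(n_{1,0},n_{2,0})\|_1$. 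This bound does not depend on $(S_1,S_2)$, because the competition terms only add nonnegative losses, or nonnegative inflow into phase~2 that is exactly compensated by losses in phase~1. Hence $\Phi$ maps the closed set $B_T=\{0\le S_i\le R_T\}$ into itself, and on $B_T$ the Lipschitz constant $L$ of Assumption \ref{assump:lip} applies.

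The key step, and the main obstacle, is a contraction estimate. Take $S=(S_1,S_2)$ and $M=(M_1,M_2)$ in $B_T$ with corresponding solutions $(\no,\nt)$ and $(m_1,m_2)$, and set $u_i = \no - m_1$, $\nt - m_2$ respectively. The difference satisfies the linear system with coefficients frozen at $S$, plus source terms $-(\gamtot(S)-\gamtot(M))m_1$ in the phase-1 equation and $-(\gamt(S)-\gamt(M))m_2$ in the phase-2 equation. It also carries the extra boundary term $(\CS(S)-\CS(M))\int m_1$ at $a=0$ for phase~2. Here we will need $\CS$ itself to be Lipschitz, not only $\gamtot$. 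I would either note that this follows if one reads Assumption \ref{assump:lip} as applying to each rate, or handle it by bounding $|\CS(S)-\CS(M)|\le L'|S-M|$ on the bounded set $B_T$. I expect this subtlety to be the delicate point.

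Using the mild/Duhamel formulation (or the weak formulation with test functions approximating $\mathrm{sign}(u_i)$, as in the proof of \eqref{eq:exp_bound_L1}), we will derive
\[
\frac{\dd}{\dd t}\big(\|u_1\|_1+\|u_2\|_1\big)\le \max(\|b\|_\infty,\|\tilde b\|_\infty)\big(\|u_1\|_1+\|u_2\|_1\big) + C L |S-M|(t)\big(\|m_1\|_1+\|m_2\|_1\big).
\]
Since $u(0)=0$, Gronwall then gives $\sup_{[0,T']}\|u\|_1\le C_T T'\|S-M\|_{L^\infty(0,T')}$. Therefore $\|\Phi(S)-\Phi(M)\|_{L^\infty(0,T')}\le C_T\|\psi\|_\infty T'\|S-M\|$, and $\Phi$ is a contraction for $T'$ small, with $T'$ depending only on $T$ and the data.

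Banach's fixed point theorem then yields a unique fixed point on $[0,T']$. The corresponding $(\no,\nt)$ is a weak solution of \eqref{syst:edp}. Since $T'$ depends only on the uniform bound $R_T$, we iterate on $[T',2T']$ and so on, restarting from $(\no,\nt)(T')$, whose $L^1$ norm is still controlled by \eqref{eq:exp_bound_L1}. This covers $[0,T]$, and since $T$ is arbitrary it covers all of $\R_+$. Uniqueness follows from the same contraction estimate: any weak solution is nonnegative, so its $S$ lies in $B_T$, and it must be a fixed point of $\Phi$. Continuity in time with values in $L^1$ is inherited from Theorem \ref{thm:exis_lin}.
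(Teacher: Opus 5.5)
Your proposal is essentially the paper's own proof: a Banach fixed point on the environmental factors $(S_1,S_2)$ through the linear problem of Theorem~\ref{thm:exis_lin}, an $L^1$ estimate on the difference obtained by multiplying by the sign and applying Gr\"onwall with the exponential bound \eqref{eq:exp_bound_L1}, a contraction for small time, and iteration on successive intervals. The subtlety you flag is genuine, and the paper passes over it as well: in its bound on $|h_2(0,t)|$ it writes a $\gamt$-difference where the $\CS$-difference belongs, and then bounds that term by $L$. So you should simply add the hypothesis that $\CS$ is Lipschitz. Your alternative of extracting a constant $L'$ on the bounded set $B_T$ does not work, because Assumption~\ref{assump:compet_0} only gives $\CS\in L^\infty_{\text{loc}}$, which provides no modulus of continuity.
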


We can also deduce from the comparison principle of the linear system the positivity of the solutions. The proof is presented in the Appendix \ref{sec:app_ex}. 

\begin{prop}\label{prop:positivity}
Suppose $\psi_1, \psi_2$ are positive and bounded. Under Assumptions \ref{assump:bio}, \ref{assump:compet_0}, \ref{assump:bounds} and \ref{assump:lip}, the solutions to system \eqref{syst:edp} are positive.
\end{prop}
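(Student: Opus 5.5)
The plan is to deduce positivity from Theorem \ref{them:ex} together with the comparison principle of Theorem \ref{thm:exis_lin}. The key idea is to \emph{freeze the non-linearity}. Let $(\no,\nt)\in C(\R_+;L^1(\R_+))^2$ be the unique weak solution of \eqref{syst:edp} given by Theorem \ref{them:ex}, with nonnegative initial data $n_{1,0},n_{2,0}\in L^1(\R_+,\R_+)$. From it, define the environmental functions
\begin{equation*}
S_1(t)=\int_0^{+\infty}\psi_1(a)\no(t,a)\dd a,\qquad S_2(t)=\int_0^{+\infty}\psi_2(a)\nt(t,a)\dd a .
\end{equation*}

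First, these frozen coefficients are admissible for Theorem \ref{thm:exis_lin}. Since $\psi_1,\psi_2$ are bounded and $\no,\nt$ are continuous in time with values in $L^1$, we have
\begin{equation*}
|S_i(t)|\le \|\psi_i\|_\infty\|n_i(t,\cdot)\|_1 ,
\end{equation*}
and the right-hand side is bounded on every compact time interval. Hence $S_1,S_2\in L^\infty_{\text{loc}}(\R_+)$. Because $\gamo,\gamt,\CS\in L^\infty_{\text{loc}}(\R^2)$, the coefficients $\gamtot(S_1(t),S_2(t))$, $\gamt(S_1(t),S_2(t))$ and $\CS(S_1(t),S_2(t))$ are then locally bounded in time.

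Next, with these $S_1,S_2$ fixed, the non-linear solution $(\no,\nt)$ is by construction a weak solution of the \emph{linear} system \eqref{syst:edp_lin}. Indeed, the weak formulations in Definition \ref{def:weak_2ol} coincide term by term once the competition arguments are evaluated along the trajectory. The integrability requirements of that definition also hold, since $b,\tilde b,\ks$ are bounded and $\CS(S_1,S_2)$ is locally bounded. By the uniqueness part of Theorem \ref{thm:exis_lin}, $(\no,\nt)$ is therefore \emph{the} solution of \eqref{syst:edp_lin} with these coefficients.

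Now I compare with the zero solution. The pair $(0,0)$ solves \eqref{syst:edp_lin} with zero initial data. Since $0\le n_{1,0}$ and $0\le n_{2,0}$, the comparison principle of Theorem \ref{thm:exis_lin} yields $\no(t,\cdot)\ge0$ and $\nt(t,\cdot)\ge0$ for all $t\ge0$.

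The only delicate point is the sign of the transition rate in the linear problem. The comparison principle relies on the boundary term $\int(\ks+\CS(S_1,S_2))\no\,\dd a$ being monotone in $\no$, which requires $\CS(S_1(t),S_2(t))\ge0$. Assumption \ref{assump:compet_0} only guarantees this for nonnegative arguments, and a priori we do not know $S_i\ge0$ before proving positivity. Since $\CS\ge0$ on the quadrant is what the linear theory actually uses, I would handle this circularity by working with truncated coefficients. Set
\begin{equation*}
\bar\CS(S_1,S_2)=\CS(S_1^+,S_2^+),
\end{equation*}
and define $\bar\gamo,\bar\gamt$ in the same way. These are still nonnegative, locally bounded, vanish at $(0,0)$, and satisfy Assumption \ref{assump:lip}. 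Theorem \ref{them:ex} gives a unique solution of the truncated system. The argument above then shows it is nonnegative, since the truncated $\CS\ge0$ everywhere. Because $\psi_i\ge0$, its $S_i$ are then nonnegative, so the truncation is inactive and this solution also solves \eqref{syst:edp}. By uniqueness in Theorem \ref{them:ex}, it coincides with $(\no,\nt)$, which is therefore positive.
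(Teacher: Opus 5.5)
Your proof is correct and follows the same route as the paper: freeze $S_1,S_2$ along the nonlinear solution, view that solution as the solution of the linear system \eqref{syst:edp_lin}, and compare it with the zero solution using the comparison principle of Theorem \ref{thm:exis_lin}. Your truncation $\CS(S_1^+,S_2^+)$ is a sound extra precaution for the sign of $\ks+\CS$ along the trajectory, a point the paper's proof does not address (it is implicitly covered there because the fixed point in Theorem \ref{them:ex} is constructed among nonnegative $S$).
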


We introduce the following assumptions ensuring the fact that competition increases with the environmental factor (typically, a population size), in line with its biological intepretation. 
\begin{assumption} 
\label{assump:partial}
\begin{align}
    & \frac{\partial \gamtot}{\partial S_1} > 0, \frac{\partial \gamtot}{\partial S_2} \geq 0 \text{ and }\frac{\partial \gamt}{\partial S_1} \geq 0, \frac{\partial \gamt}{\partial S_2} > 0 \label{eq:partial_compet}\\
    & \forall S_2, S_1 \geq 0, \gamt(S_1, x) \xrightarrow[x \to + \infty]{}+\infty, \gamo(x, S_2)\xrightarrow[x \to + \infty]{}+\infty \label{eq:compet_infinity} 
\end{align}
\end{assumption}

\begin{assumption}\label{assump:psi_phi}
Let $\phi^0_1$ and $\phi^0_2$ be the adjoint eigenfunctions defined by \eqref{eq:phi_0}, 
\begin{align*}
    &\exists c_1^0, C_1^0 > 0, c_1^0\phi^0_1(a) \leq \psi_1(a) \leq C_1^0\phi^0_1(a) \\
     &\exists c_2^0, C_2^0 > 0, c_2^0\phi^0_2(a) \leq \psi_2(a) \leq C_2^0\phi^0_2(a). 
\end{align*}
\end{assumption}

\begin{assumption}\label{assump:psi_const}
\begin{align*}
    &\exists c_1, C_1 > 0, c_1 \leq \psi_1(a) \leq C_1\\
     &\exists c_2, C_2 > 0, c_2\leq \psi_2(a) \leq C_2. 
\end{align*}
\end{assumption}

 In the case where $\phi_1^0$ and  $\phi_2^0$ are bounded from above and below by a strictly positive constant, Assumption \ref{assump:psi_phi} is not a very strong requirement and is trivially verified for  competition kernels that are also bounded from above and below by a strictly positive constant. This is the case under the assumptions of Proposition \ref{prop:ex_phi}.

\begin{assumption}\label{assump:sub_tot_compet}
$\forall A >0, \exists \alpha_A,\beta_A >0,  \CS(A,.) \leq  \beta_A+ \alpha_A \gamt(0, .)$. 
\end{assumption}

We follow up with a result on the boundedness of $S_1$ and $S_2$. 

\begin{prop} \label{prop:bound_wcompker}
    Let $(S_1(t), S_2(t))$ be defined by \eqref{eq:def_2} where $\no,\nt \in  C(\R_+; L^1(\R_+))^2$ are solutions of \eqref{syst:edp}. Suppose $\int_0^{+\infty} \phi^0_1(a)\no(a,0)+ \phi^0_2(a)\nt(a,0)\dd a > 0$ and Assumptions \ref{assump:bio} to \ref{assump:sub_tot_compet} are verified. Then, 
    \begin{equation*}
        \exists m, M \geq0, \forall t \geq 0, m < S_1(t) + S_2(t) < M.
    \end{equation*}
\end{prop}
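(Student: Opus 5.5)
We work with the weighted mass
\[
m(t)=\int_0^{+\infty}\big(\phi^0_1(a)\no(t,a)+\phi^0_2(a)\nt(t,a)\big)\dd a .
\]
By Assumption \ref{assump:psi_phi}, $m(t)$ is comparable to $S_1(t)+S_2(t)$ up to constants: $\min(c_1^0,c_2^0)\,m\le S_1+S_2\le \max(C_1^0,C_2^0)\,m$. It therefore suffices to bound $m$ from above and below by strictly positive constants.

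\textbf{Evolution of $m$.} Test the weak formulation (Definition \ref{def:weak_2ol}) against $\phi^0_1,\phi^0_2$; a standard truncation argument is needed since these test functions are not compactly supported. Then use the adjoint equations \eqref{eq:phi_0}. The boundary terms $\phi^0_1(0)\no(t,0)$ and $\phi^0_2(0)\nt(t,0)$ cancel the source terms of the adjoint. What remains is, formally,
\[
\frac{\dd m}{\dd t}=\lambda_0 m-\gamtot\!\int\!\phi^0_1\no-\gamt\!\int\!\phi^0_2\nt+\CS\,\phi^0_2(0)\!\int\!\no ,
\]
where the competitions are evaluated at $(S_1,S_2)$. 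The last term, transitions induced by competition feeding phase~2, has the wrong sign. By Proposition \ref{prop:positivity} all quantities are nonnegative.

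\textbf{Lower bound.} Positivity of $\CS\phi_2^0(0)\int\no$ gives $\dot m\ge(\lambda_0-\max(\gamtot,\gamt))m$. Since the competitions vanish at $(0,0)$ and are Lipschitz (Assumption \ref{assump:lip}), there is $\delta>0$ such that $S_1+S_2<\delta$ implies $\max(\gamtot,\gamt)<\lambda_0/2$. Whenever $m$ is small enough that $S_1+S_2<\delta$, $m$ therefore increases. Since $m(0)>0$, this yields $m(t)\ge\min(m(0),\delta')$ for a suitable $\delta'>0$, and hence a positive lower bound on $S_1+S_2$. The strict inequality is obtained by shrinking the constant.

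\textbf{Upper bound (main obstacle).} Here the bad term $\CS\phi^0_2(0)\int\no$ must be absorbed. We split according to the size of $S_1$.

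\emph{Large $S_1$.} If $S_1\ge A$ with $A$ large, Assumption \ref{assump:psi_const} gives $\int\no\le S_1/c_1$, and the same assumption combined with Assumption \ref{assump:psi_phi} shows that $\int\phi^0_1\no$ is comparable to $S_1$. The bad term is $\CS\phi^0_2(0)\int\no$, while the good term $\gamtot\int\phi_1^0\no$ contains $\CS\int\phi_1^0\no$. The estimate $\phi^0_1(a)\ge \phi^0_2(0)\int_a^\infty \ks e^{-\int_a^u(\ks+\lambda_0)}$ is only partial, so instead we rely on $\gamo(x,S_2)\to\infty$ from \eqref{eq:compet_infinity}, together with monotonicity, to make $\gamo\int\phi_1^0\no$ dominate $\lambda_0\int\phi^0_1\no$. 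The difficulty is that $\CS$ may also grow with $S_1$. We handle this by bounding $\CS\int\no$ by a multiple of $\CS\int\phi^0_1\no$, using the lower bound on $\phi_1^0$ from Assumptions \ref{assump:psi_phi} and \ref{assump:psi_const}, and then checking that the combined good term wins. This is the step where the precise constants matter.

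\emph{Bounded $S_1$.} If $S_1\le A$, then $\int\no$ is bounded. By Assumption \ref{assump:sub_tot_compet}, $\CS\le\beta_A+\alpha_A\gamt(0,S_2)\le\beta_A+\alpha_A\gamt(S_1,S_2)$. The bad term is thus at most $C(\beta_A+\alpha_A\gamt)$, whereas the good term $\gamt\int\phi^0_2\no$ is of order $\gamt S_2$. Hence for $S_2$ large, where $\gamt\to\infty$ by \eqref{eq:compet_infinity}, $\dot m<0$.

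\textbf{Conclusion.} Combining the two cases, there is $M_0$ such that $m\ge M_0$ implies $\dot m<0$. Therefore $m(t)\le\max(m(0),M_0)$ for all $t$, which gives the bound $S_1+S_2<M$.
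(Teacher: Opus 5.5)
Your lower bound is correct and follows the paper's route: weight by the adjoint eigenfunctions and drop the nonnegative term $\CS\,\phi^0_2(0)\int\no$. Your Lipschitz version even gives the explicit inequality $\dot m\ge m\big(\lambda_0-L\max(C^0_1,C^0_2)\,m\big)$. The upper bound, however, has a genuine gap, located exactly at the step you flagged.

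\textbf{The gap.} Your conclusion needs a threshold $M_0$ such that $m\ge M_0$ implies $\dot m<0$. Under Assumptions \ref{assump:bio}--\ref{assump:sub_tot_compet} no such threshold exists in general. Each competition-induced transition changes $m$ by $\phi^0_2(0)-\phi^0_1(a)$, which can be positive. Nothing in the hypotheses makes $\gamo$ dominate $\CS$.

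\textbf{Counterexample.} Take constant rates $b=0$, $\ks=\kd=1$, $\tilde b=4$, and kernels $\psi_1=\psi_2\equiv1$. Then:
\begin{itemize}
\item $(1+\lambda_0)^2=4$, so $\lambda_0=1$;
\item $\phi^0_1$ is constant, and $\phi^0_2\equiv\phi^0_1\,\tilde b/(\kd+\lambda_0)=2\phi^0_1$.
\end{itemize}
Choose $\gamo=\tfrac12 S_1$, $\CS=S_1$, $\gamt=S_2$; every assumption up to \ref{assump:sub_tot_compet} holds. At any state with $\nt\equiv0$ your evolution formula gives
\[
\dot m=\phi^0_1N_1\Big(\lambda_0-\tfrac32N_1+2N_1\Big)=\phi^0_1N_1\Big(1+\tfrac12N_1\Big)>0
\]
with $N_1$ arbitrarily large. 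So the step ``bound $\CS\int\no$ by a multiple of $\CS\int\phi^0_1\no$ and check that the good term wins'' cannot be carried out. Along actual trajectories $m$ does stay bounded, but not because of a sign condition on $\dot m$ at large $m$. (Separately, your bounded-$S_1$ case uses $\CS(S_1,\cdot)\le\beta_A+\alpha_A\gamt(0,\cdot)$ for all $S_1\le A$. The assumption only gives this at $S_1=A$, and $\CS$ is not assumed monotone.)

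\textbf{The missing idea.} For the upper bound, use the unweighted mass $N=\int\no+\int\nt$, as the paper does. For this functional, the transition outflow $\int(\ks+\CS)\no$ from phase~1 is exactly the boundary inflow $\nt(t,0)$ into phase~2. The bad term therefore cancels identically:
\[
\frac{\dd N}{\dd t}\le\max(\|b\|_\infty,\|\tilde b\|_\infty)\,N-\gamo\,N_1-\gamt\,N_2 .
\]
Then \eqref{eq:compet_infinity} together with Assumption \ref{assump:psi_const} forces decay once $N$ is large. Both $S_1+S_2$ and $m$ are controlled by $N$, so this also closes your argument. This route needs neither Assumption \ref{assump:sub_tot_compet} nor your case split on $S_1$.
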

\begin{proof}
We introduce $N(t) := \int_0^{+\infty} n_1(t,a) \dd a +\int_0^{+\infty} n_2(t,a) \dd a := N_1(t) + N_2(t) $. We have by Assumptions \ref{assump:partial} and \ref{assump:psi_const}, 
\begin{align*}
    \frac{\dd N(t)}{\dd t} &\leq \max(||b||_{\infty},||\tilde{b}||_{\infty} ) N(t)  - \gamt(0, S_2(t)) N_2(t) - \gamo(S_1(t), 0) N_1(t)\\
    & \leq  \max(||b||_{\infty},||\tilde{b}||_{\infty} ) N(t)  - \gamt(0, c_2 N_2(t)) N_2(t) - \gamo(c_1 N_1(t), 0) N_1(t). 
\end{align*}
The right hand side of the previous equation goes to $-\infty$ when $N_0 \to \infty$, and has a unique strictly positive root (by Assumptions \ref{assump:compet_0} and \ref{assump:partial}). Thus, we can apply Lemma \ref{lemma:eq_diff} in the Appendix to show that  $N(t) \leq 2M$ where $M$ is such that $2\max(||b||_{\infty},||\tilde{b}||_{\infty} ) - \min(\gamt(0, c_2 M), \gamo(c_1 M, 0)) < 0$. Finally,  Assumption \ref{assump:psi_const} yields
\begin{equation*}
    S_1(t) + S_2(t) \leq \max(C_1, C_2) N(t) \leq  2\max(C_1, C_2)M. 
\end{equation*}

We now prove the lower bound.
This part of the proof relies on a technique based on the adjoint problem, also used in \cite{Perthame_Tumulari2008}.  
We begin by introducing
      \begin{equation*}
          N_{1,0}(t) = \int_0^{+\infty} \phi^0_1(a) \no(t,a) \dd a, \qquad N_{2,0} = \int_0^{+\infty} \phi^0_2(a) \nt(t,a) \dd a, \qquad N_0 = N_{1,0} + N_{2,0}, 
      \end{equation*}
where  $\phi^0_1$ and $\phi^0_2$ are defined by \eqref{eq:phi_0}. We have
      \begin{align*}
          \frac{\dd N_0(t)}{\dd t} 
           & \geq  \lambda_0 N_0(t) - \gamtot(S_1(t),S_2(t)) N_{1,0}(t) - \gamt(S_1(t),S_2(t)) N_{2,0}(t) \\
           & \geq \lambda_0 N_0(t) - (\gamtot(C_1^0 N_0(t),C_2^0 N_{0}(t)) + \gamt(C_1^0 N_0(t),C_2^0 N_{0}(t)))N_0(t). 
      \end{align*}
       By Assumptions \ref{assump:compet_0} and \ref{assump:partial}, the right hand side of the previous equation has a unique strictly positive root $\tilde{N_0}$ and is positive on $[0, \tilde{N_0}]$. Thus, we can apply Lemma \ref{lemma:eq_diff} in the Appendix, which yields
      \begin{equation*}
          N_0(t) \geq \min\Big(N_0(0), \Tilde{N}_0 \Big) > 0. 
      \end{equation*}
     Therefore finally,
      \begin{equation*}
          S_1(t) + S_2(t) \geq \frac{1}{\max(c^{0}_1, c^0_2 ) }N_0(t) \geq  \frac{\min\Big(N_0(0), \Tilde{N}_0 \Big)}{\max(c^{0}_1, c^0_2 )},
      \end{equation*}
which shows the expected lower bound. 
\end{proof}

The proof for the lower bound in the previous proposition requires the assumption $\tilde{b} \not\equiv 0$, else $\phi^0_2 \equiv 0$ and the upper bound for $\psi_2$ in \ref{assump:psi_phi} does not hold. As this assumption is not needed for the upper bound, it remains true even when $\tilde{b} \equiv 0$. 

 \subsection{Characterisation of steady states}
 
Now that we have obtained existence, uniqueness and positivity of solutions to \eqref{syst:edp}, we wish to study the existence of steady state solutions.
In the rest of the article, we assume that the competition kernels are identically one i.e
\begin{align*}
    &S_1(t) = N_1(t) := \int_0^{+\infty} \no(t,a) \dd a,\qquad S_2(t) = N_2(t) := \int_0^{+\infty} \nt(t,a) \dd a.
\end{align*}
Defined as such, $N_1$ and $N_2$ represent respectively the total population in phases 1 and 2. Furthermore, constant competition kernels verify Assumption \ref{assump:psi_const} as well as \ref{assump:psi_phi} if the solutions to the adjoint system \eqref{eq:phi_0} are bounded from above and below (which is true under the assumptions stated in Proposition \ref{prop:ex_phi}).

By solving \eqref{syst:edp} at steady state and plugging back into the birth condition, we find that a non-trivial steady state $(N_1^*,N_2^*)$ to \eqref{syst:edp} exists iff it verifies the following system

\begin{subnumcases}
    {} \int_0^{+\infty}\!\tilde{b}(a) e^{-\int_0^{a} \kd(u) \dd u - \gamt(N_1,N_2 )a} \dd a\! \cdot\! \int_0^{+\infty}\!(\ks(a) \! + \!\CS(N_1,N_2)) e^{-\int_0^{a} \ks(u) \dd u -\gamtot(N_1,N_2) a} \dd a \nonumber\\
    \qquad +  \int_0^{+\infty}b(a) e^{-\int_0^{a} \ks(u) \dd u-\gamtot(N_1,N_2)a} \dd a = 1 \label{eq:steady_full}\\
    N_1 \int_0^{+\infty} e^{-\int_0^{a} \kd(u) \dd u - \gamt(N_1,N_2)a} \dd a \cdot \int_0^{+\infty}(\ks(a) \! +\!\gamt(N_1,N_2)) e^{-\int_0^{a} \ks(u)  \dd u-\gamtot(N_1,N_2)a} \dd a \nonumber \\
   \qquad - N_2 \int_0^{+\infty}e^{-\int_0^{a} \ks(u)  \dd u-\gamtot(N_1,N_2)a} \dd a  = 0
\end{subnumcases}

We begin with a short proposition comparing the (potential) steady state population size of the 2-phase model with the population size of the same model obtained by setting $\nt \equiv  0$ in \eqref{syst:edp}.
We introduce the following system
\begin{equation}
\label{eq:syst_one_phase}
    \begin{cases}
         &\frac{\partial n(t,a) }{\partial t} + \frac{\partial n(t,a) }{\partial a} = -(\ks(a) + \gamtot(N(t),0))n(t,a)\\
        &n(0,t) = \int_0^{+\infty}b(a)n(t,a)\dd a\\
        &n(a,0) = n_{0}(a).
    \end{cases}
\end{equation}

Such models have been extensively studied (for example in  \cite{FARKAS2004771,Iannelli_17,Perthame2007_transport,pruss1981equilibrium,webb1985theory})
and under the assumption that $\int_0^{+\infty} b(a) e^{-\int_0^{a} \ks(u) \dd u } \dd a >1$, the model admits a strictly positive stable steady state for which the total population size verifies $ \gamtot(N^*, 0) = \lambda_0$ where $\lambda_0>0$ is such that  $\int_0^{+\infty} b(a) e^{-\int_0^{a} \ks(u) \dd u - \lambda_0 a} \dd a = 1$. If $\int_0^{+\infty} b(a) e^{-\int_0^{a} \ks(u) \dd u } \dd a < 1$, the only stable steady state is such that $\gamtot(N^*,0) = 0$ (see \cite{pruss1981equilibrium} Theorem 3 and Corollary 2).

\begin{prop}\label{prop:comp_2s}
Under Assumptions \ref{assump:bio}, \ref{assump:compet_0}, \ref{ass:for_lambda_0}, \ref{assump:partial} and if 
    \begin{equation*}
        \left \lVert \frac{\partial \gamtot}{\partial y} \right \rVert_{\infty} \leq \inf_{(x,y)\in \R_+^2}\left \{ \frac{\partial \gamtot}{\partial x}(x,y) \right \},
    \end{equation*}
    all the solutions $(N_1^*, N_2^*)$ to \eqref{eq:steady_full} are such that 
    \begin{equation*}
        N_1^* + N_2^* \geq N^*,
    \end{equation*}
where $N^*$ is the population size of the unique steady state of \eqref{eq:syst_one_phase}.    
\end{prop}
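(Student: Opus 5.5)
Let $(N_1^*,N_2^*)$ solve the steady-state system. The plan is to show $\gamtot(N_1^*,N_2^*)\ge \lambda_0^{(1)}$, where $\lambda_0^{(1)}$ is the one-phase growth rate ($\int b e^{-\int_0^a\ks-\lambda_0^{(1)}a}\dd a=1$ when this integral at $0$ exceeds $1$, and $\lambda_0^{(1)}=0$ otherwise), so that $\gamtot(N^*,0)=\lambda_0^{(1)}$. We then convert this inequality on competition values into an inequality on population sizes using the hypothesis on the partial derivatives.

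\emph{Step 1.} Write $g=\gamtot(N_1^*,N_2^*)\ge 0$. The second product in the first steady-state equation is nonnegative, since $\tilde b\ge0$ and $\ks+\CS\ge0$. Hence
\begin{equation*}
\int_0^{+\infty} b(a)e^{-\int_0^a\ks(u)\dd u-g a}\dd a\le 1 .
\end{equation*}
The map $\lambda\mapsto\int b e^{-\int_0^a \ks-\lambda a}\dd a$ is nonincreasing, and strictly decreasing wherever it is positive. In the case $\int b e^{-\int\ks}>1$ it equals $1$ exactly at $\lambda_0^{(1)}$, so $g\ge\lambda_0^{(1)}=\gamtot(N^*,0)$. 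In the other case $N^*$ satisfies $\gamtot(N^*,0)=0$, hence $N^*=0$ because $\gamtot(0,0)=0$ and $\partial_{S_1}\gamtot>0$, and the claim is trivial. Thus in all cases $\gamtot(N_1^*,N_2^*)\ge\gamtot(N^*,0)$.

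\emph{Step 2.} Set $\alpha=\inf\partial_x\gamtot$ and $\beta=\|\partial_y\gamtot\|_\infty\le\alpha$. Integrate along the path $(N^*,0)\to(N_1^*,0)\to(N_1^*,N_2^*)$:
\begin{equation*}
\gamtot(N_1^*,N_2^*)-\gamtot(N^*,0)=\big[\gamtot(N_1^*,0)-\gamtot(N^*,0)\big]+\big[\gamtot(N_1^*,N_2^*)-\gamtot(N_1^*,0)\big].
\end{equation*}
Argue by contradiction and suppose $N_1^*+N_2^*<N^*$. Then $N_1^*<N^*$, so the first bracket is at most $-\alpha(N^*-N_1^*)$. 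The second bracket is at most $\beta N_2^*\le\alpha N_2^*$. The sum is therefore at most $\alpha(N_1^*+N_2^*-N^*)$, which is $<0$ provided $\alpha>0$. This contradicts Step 1, so $N_1^*+N_2^*\ge N^*$.

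\emph{Obstacle.} The delicate point is the degenerate case $\alpha=0$: $\partial_x\gamtot>0$ does not guarantee a positive infimum. In that case the hypothesis forces $\beta=0$, so $\gamtot$ does not depend on $y$. Then Step 1 reads $\gamtot(N_1^*,0)\ge\gamtot(N^*,0)$, and strict monotonicity in $x$ gives $N_1^*\ge N^*$ directly, which again yields $N_1^*+N_2^*\ge N^*$. One must also check that $\gamtot$ is regular enough (say $C^1$, as Assumption \ref{assump:partial} implicitly requires) for the path integration in Step 2 to be valid.
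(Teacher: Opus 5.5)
Your proof is correct and follows the paper's argument. You first get $\gamtot(N_1^*,N_2^*)\ge\lambda_0=\gamtot(N^*,0)$ from the birth condition, using that the $\tilde b$-product is nonnegative, and then integrate along $(N^*,0)\to(N_1^*,0)\to(N_1^*,N_2^*)$ with the derivative bounds. Your separate treatment of the case $\inf\partial_x\gamtot=0$ is a genuine improvement, since the paper's final step $0\le(N_1^*+N_2^*-N^*)\inf\partial_x\gamtot$ silently requires that infimum to be positive.
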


\begin{proof}
    We begin by showing that non-trivial solutions to \eqref{eq:steady_full} exist. We first introduce $f(x, y)$ such that \eqref{eq:steady_full} can be rewritten $f(N_1^*, N_2^*) = 1$. We notice that $f$ is continuous in $x$ and $y$ on $\R_+$ as the exponential functions under the integrals can be dominated by their value when $x = y = 0$ and the term in $\CS$ out of the exponential can be factorised out of the integral. 
    By Assumption \ref{ass:for_lambda_0}, $f(0,0) >1$ and $f(x,0) \xrightarrow[x \to + \infty]{} 0$. Thus there is a strictly positive value $N_1^*$ such that $f(0,N_1^*) =1$. 
    This shows the existence of non-trivial solutions to \eqref{eq:steady_full}.

Let $(N_1^*, N_2^*)$ be a non-trivial solution of \eqref{eq:steady_full}. 

\textbf{Case 1 : $\int_0^{+\infty} b(a) e^{-\int_0^{a} \ks(u) \dd u } \dd a >1$}.

Suppose $\gamtot( N_1^*, N_2^*) <\lambda _0$, then we have 
    \begin{equation*}
        \int_0^{+\infty}b(a) e^{-\int_0^{a} \ks(u)\dd u - \gamtot( N_1^*, N_2^*) a} \dd a > 1 
    \end{equation*}
    hence $(N_1^*, N_2^*)$ cannot be a solution of \eqref{eq:steady_full}. Thus, any solution of \eqref{eq:steady_full} is such that $\gamtot( N_1^*, N_2^*) \geq \lambda_0 = \gamtot(N^*, 0)$. By monotonicity by Assumption \eqref{eq:partial_compet}, there cannot be both $N_1^* < N^*$ and $N_2^* = 0$. If  $N_1^* \geq N^*$,  then $N_1^* + N_2^* \geq  N^*$ and the result is shown. If $N_1^* < N^*$ and $N_2^* > 0$, we have
\begin{align*}
     0 &\leq  \gamtot( N_1^*, N_2^*) - \gamtot(N^*, 0) \\
    & =  \gamtot(N_1^*, N_2^*) - \gamtot(N_1^*,0) + \gamtot(N_1^*,0)  - \gamtot(N^*, 0)\\
    & = \int_0^{N_2^*}\frac{\partial \gamtot}{\partial y}(N_1^*,y) \dd y - \int_{N_1^*}^{N^*}\frac{\partial \gamtot}{\partial x}(x,0) \dd x \\
    & \leq N_2^*   \left \lVert \frac{\partial \gamtot}{\partial y} \right \rVert_{\infty} - (N^* - N_1^*) \inf_{(x,y) \in \R_+^2}\left \{ \frac{\partial \gamtot}{\partial x}(x,y) \right \}\\
    & \leq (N_2^* + N_1^* - N^* )\inf_{(x,y) \in \R_+^2}\left \{ \frac{\partial \gamtot}{\partial x}(x,y) \right \}.
\end{align*}
    Thus $N_2^* + N_1^* \geq  N^*$ and the result follows. 
    
\textbf{Case 2 : $\int_0^{+\infty} b(a) e^{-\int_0^{a} \ks(u) \dd u } \dd a \leq 1$}.
In this case, the 1-dimensional system only admits $(0,0)$ as a positive steady state (see Theorem 3 in \cite{pruss1981equilibrium}) and thus $N_1^* + N_2^* \geq N^* = 0$. 
\end{proof}

Proposition \ref{prop:comp_2s} can be interpreted as follows: 
the presence of older diminished individuals that are less competitive in a population results in a larger population size at equilibrium, compared to the same population where everyone has the same competitivity.

The result shown in Proposition \ref{prop:comp_2s} is true for the solutions of \eqref{eq:steady_full} and therefore true for any steady state of the system, provided it exists. However, the proposition does not  prove the existence of such a steady state. 
Indeed, the presence of $\tilde{b} \neq 0$ as well as the different competition terms make this system impossible to study directly with monotonicity arguments as is the case in \cite{covei2023nonlinearpopulationmodel,TUMULURI20111420} for example.  

In the following, we assume that the competition functions are linear in population sizes i.e.

\begin{assumption}\label{assump:positive_compet}
\begin{equation}
\label{eq:lin_compet}
\begin{split}
    & \exists \CSS, \CSNS \in \R_+^2, \CS(N_1, N_2) = \CSS N_2 + \CSNS N_1\\
    & \exists \CDS, \CDNS \in \R_+^2, \gamo(N_1, N_2) = \CDS N_2 + \CDNS N_1\\
    & \exists \CDSS, \CDSNS \in \R_+^2, \gamt(N_1, N_2) = \CDSS N_2 + \CDSNS N_1. 
    \end{split}
\end{equation}
\end{assumption}
For ease of notation, we introduce 
\begin{align*}
    & \CtotNS  = \CSNS + \CDNS \text{ and } \CtotS  = \CSS + \CDS.
\end{align*}

\begin{remark} \label{rem:compet_less}
\begin{itemize}
    \item  Linear competition rates verify Assumptions \ref{assump:compet_0}, \ref{assump:lip} and \ref{assump:sub_tot_compet} as well as  \ref{assump:partial} provided $\CtotNS  > 0$ and $ \CDSS >0$.
    \item The biological motivation behind this work is that individuals in phase 2 are old, diminished and close to death. As such, it would be reasonable to assume that individuals in phase 1 are not very affected by the competition of individuals in phase 2, meaning that $\CSS \ll \CSNS$ and $\CDS \ll \CDNS$. 
\end{itemize}
   
\end{remark}

Proposition \ref{prop:comp_2s} can be directly applied to linear competition rates: 
\begin{corollary}\label{cor:comp_2s_lin}
    Under Assumption \ref{ass:for_lambda_0}, if $\CtotNS  > 0, \CDSS >0$ and $\CtotNS  \geq \CtotS $. Then, all the solutions $(N_1^*, N_2^*)$ to \eqref{eq:steady_full} are such that 
    \begin{equation*}
        N_1^* + N_2^* \geq N^*.
    \end{equation*}
\end{corollary}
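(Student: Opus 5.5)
The plan is to deduce Corollary \ref{cor:comp_2s_lin} directly from Proposition \ref{prop:comp_2s}. To do so, I will check that linear competition rates of the form \eqref{eq:lin_compet} satisfy all of its hypotheses. The remaining Assumptions \ref{assump:bio} and \ref{ass:for_lambda_0} are implicitly in force, as they are throughout the section.

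\textbf{Standing assumptions.} By Remark \ref{rem:compet_less}, linear rates with nonnegative coefficients satisfy Assumption \ref{assump:compet_0}: they are locally bounded, vanish at $(0,0)$, and are nonnegative on $\R_+^2$. Assumption \ref{assump:partial} requires more care. We have $\partial \gamtot/\partial N_1 = \CtotNS > 0$, $\partial \gamtot/\partial N_2 = \CtotS \geq 0$, $\partial \gamt/\partial N_1 = \CDSNS \geq 0$ and $\partial \gamt/\partial N_2 = \CDSS > 0$. For \eqref{eq:compet_infinity}, $\gamt(N_1,x) \to +\infty$ follows from $\CDSS>0$. The condition on $\gamo(x,N_2)$ needs $\CDNS>0$, which is not strictly implied by $\CtotNS>0$.

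I would remark that the proof of Proposition \ref{prop:comp_2s} only uses two facts. The first is monotonicity of $\gamtot$ in its first argument, used to exclude $N_1^*<N^*$ with $N_2^*=0$ and in the integral estimate. The second is that $f(x,0)\to 0$ as $x\to\infty$, which only matters for existence and follows from $\gamtot(x,0)=\CtotNS x\to\infty$. So the divergence of $\gamo$ alone is not needed, and $\CtotNS>0$ suffices.

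\textbf{Derivative condition.} The condition
\begin{equation*}
\left\lVert \frac{\partial \gamtot}{\partial y}\right\rVert_\infty \leq \inf_{(x,y)\in\R_+^2} \frac{\partial \gamtot}{\partial x}(x,y)
\end{equation*}
becomes, for linear rates, the inequality $\CtotS \leq \CtotNS$, which is exactly the hypothesis.

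Applying Proposition \ref{prop:comp_2s} then gives $N_1^*+N_2^*\geq N^*$.

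I could instead rerun the computation from the proof of Proposition \ref{prop:comp_2s} directly. In Case 1 we have $\gamtot(N_1^*,N_2^*)\geq \lambda_0=\CtotNS N^*$, which gives
\begin{equation*}
\CtotNS N_1^* + \CtotS N_2^* \geq \CtotNS N^*,
\end{equation*}
and hence $\CtotNS (N_1^*+N_2^*) \geq \CtotNS N^*$. Dividing by $\CtotNS>0$ yields the claim. Case 2 is trivial because $N^*=0$.

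The only delicate point is the mismatch between Assumption \ref{assump:partial} and the hypothesis $\CtotNS>0$. This direct computation avoids it entirely, so it is the version I would write.
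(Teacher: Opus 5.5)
Your proof is correct, and the version you choose to write takes a more elementary route than the paper. The paper gives no separate argument. It treats the corollary as an immediate application of Proposition~\ref{prop:comp_2s}, with Remark~\ref{rem:compet_less} asserting that linear rates satisfy the required assumptions and the derivative condition reducing to $\CtotS\le\CtotNS$; that is your first route.

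Your written version instead starts from $\gamtot(N_1^*,N_2^*)\geq\lambda_0$. This uses only two facts: the $\tilde b$-term in \eqref{eq:steady_full} is nonnegative, and $\lambda\mapsto\int_0^{+\infty}b(a)e^{-\int_0^a\ks(u)\dd u-\lambda a}\dd a$ is strictly decreasing. From it you get $\CtotNS N_1^*+\CtotS N_2^*\geq\CtotNS N^*$ and conclude in one line. That line uses $N_2^*\geq 0$, which is what makes $\CtotS N_2^*\le\CtotNS N_2^*$ valid, so you should state it explicitly.

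In the linear setting the paper's chain of inequalities collapses to exactly your line. Your route therefore removes the subcase split on whether $N_1^*\geq N^*$ and does not need Assumption~\ref{assump:partial} at all. Your observation that \eqref{eq:compet_infinity} for $\gamo$ requires $\CDNS>0$, which $\CtotNS>0$ does not guarantee, is accurate. So the paper's route, as written, relies on a slightly overstated remark. What the paper's route buys is generality: nonlinear rates compared through their partial derivatives. Yours buys a self-contained argument whose hypotheses match the corollary exactly.

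One of your side remarks is inaccurate, though harmless. The limit $f(x,0)\to 0$ does not follow from $\CtotNS x\to\infty$ alone, because the factor $\CS(x,0)=\CSNS x$ sits outside the exponential. When $\CDSNS=0$, the $\tilde b$-term tends to $\frac{\CSNS}{\CtotNS}\int_0^{+\infty}\tilde b(a)e^{-\int_0^a\kd(u)\dd u}\dd a$. This only bears on existence of solutions, which the corollary does not assert, so your argument is unaffected.
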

In this section, we have shown general existence results for system \eqref{syst:edp}. We now turn to the study of simplified versions of the system, for which we can show results on the existence, uniqueness and stability of steady states.

\section{Long-time asymptotics in the ODE case}
\label{sec:ODE}
In this section, we show existence, uniqueness and stability of a strictly positive steady state for a simplified version of system \eqref{syst:edp} were all rates are constant (and positive) thus reducing the system to two ODEs. The system we consider is the following
\begin{align}
    \begin{cases}
        & \frac{\dd N_1(t)}{\dd t} = -(\ks + (\CSNS + \CDNS) N_1(t) + (\CSS + \CDS) N_2(t)) N_1(t)) + bN_1(t) + \tilde{b} N_2(t)\\
        &\frac{\dd N_2(t)}{\dd t} = -(\kd+ (\CDSNS N_1(t)+\CDSS N_2(t)) N_2(t)) + (\ks+\CSNS N_1(t) + \CSS N_2(t))N_1(t)\\
       &N_1(0) = N_{1,0} \geq 0, \quad N_2(0) = N_{2,0} \geq 0. \label{eq:syst_const}
    \end{cases}
\end{align}

To simplify the computations, and as biological evidence shows that $\tilde{b}$ is very small, we allow ourselves to take $\tilde{b}$ as small as necessary compared to the other constants in the system during the proofs.

\begin{thm} \label{thm:orbit}
     Under Assumptions \ref{assump:bio} and \ref{assump:positive_compet}, system \eqref{eq:syst_const} has no periodic orbit lying entirely in $\R_+^2$. 
\end{thm}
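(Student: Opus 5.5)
The plan is to apply the Bendixson--Dulac criterion on the open quadrant $(0,+\infty)^2$, and then handle separately the periodic orbits that could touch the boundary of $\R_+^2$. Write \eqref{eq:syst_const} as $\dot N = F(N)$ with
\begin{align*}
F_1(N_1,N_2) &= bN_1+\tilde b N_2-(\ks+\CtotNS N_1+\CtotS N_2)N_1,\\
F_2(N_1,N_2) &= (\ks+\CSNS N_1+\CSS N_2)N_1-(\kd+\CDSNS N_1+\CDSS N_2)N_2.
\end{align*}
This reading of the (slightly misparenthesised) system is the one consistent with \eqref{syst:edp} under Assumption \ref{assump:positive_compet}. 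I will use the Dulac function $B(N_1,N_2)=\frac{1}{N_1N_2}$, which is smooth and positive on the open quadrant.

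\textbf{Step 1: the Dulac divergence.} A direct computation gives
\begin{equation*}
\frac{BF_1}{1}=\frac{b-\ks}{N_2}+\frac{\tilde b}{N_1}-\CtotNS\frac{N_1}{N_2}-\CtotS,
\qquad
\frac{BF_2}{1}=\frac{\ks}{N_2}+\CSNS\frac{N_1}{N_2}+\CSS-\frac{\kd}{N_1}-\CDSNS-\CDSS\frac{N_2}{N_1}.
\end{equation*}
Hence
\begin{equation*}
\partial_{N_1}(BF_1)+\partial_{N_2}(BF_2)=-\frac{\tilde b}{N_1^2}-\frac{\CtotNS}{N_2}-\frac{\ks}{N_2^2}-\CSNS\frac{N_1}{N_2^2}-\frac{\CDSS}{N_1}.
\end{equation*}
All constants are nonnegative. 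Since $\ks$ is constant and $\int_0^{+\infty}\ks\,\dd a=+\infty$ by Assumption \ref{assump:bio}, we have $\ks>0$. The divergence is therefore strictly negative on $(0,+\infty)^2$. This set is simply connected, so the Bendixson--Dulac theorem rules out any periodic orbit contained in the open quadrant.

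\textbf{Step 2: orbits meeting the boundary.} I expect this to be the only delicate point. Let $\Gamma$ be a periodic orbit contained in $\R_+^2$. It is not the equilibrium $(0,0)$ and cannot pass through it.
\begin{itemize}
\item On $\{N_2=0,\ N_1>0\}$ we have $F_2=\ks N_1>0$, so the field points strictly into the quadrant.
\item On $\{N_1=0,\ N_2>0\}$ we have $F_1=\tilde b N_2$.
\end{itemize}
If $\tilde b>0$, every boundary point other than the origin has a strictly inward field. A trajectory lying in the interior just before time $t_0$ cannot reach such a point at $t_0$, since that would require the corresponding coordinate to be nonincreasing there. So a periodic orbit through a boundary point would leave the boundary and never return, a contradiction.

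If $\tilde b=0$, the axis $\{N_1=0\}$ is invariant, and on it $\dot N_2=-(\kd+\CDSS N_2)N_2<0$, so it carries no periodic orbit. By uniqueness of solutions, an orbit with a point in the interior never hits that axis. The argument for the $N_2=0$ axis is the same as in the case $\tilde b>0$.

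In all cases a periodic orbit in $\R_+^2$ would lie entirely in the open quadrant, which Step 1 excludes. This proves Theorem \ref{thm:orbit}.
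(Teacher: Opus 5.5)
Your proof is correct and follows the same route as the paper: the Bendixson--Dulac criterion with $B=1/(N_1N_2)$ on the open quadrant, using $\ks>0$ from Assumption~\ref{assump:bio}. Your divergence is the correct one, since the paper's computation drops the harmless term $-\CSNS N_1/N_2^2$. Your treatment of orbits touching the axes away from the origin fills in what the paper dismisses by only noting that $(0,0)$ is an equilibrium. The one small slip is that on $\{N_2=0\}$ one has $F_2=\ks N_1+\CSNS N_1^2$ rather than $\ks N_1$, which is still strictly positive, so the argument is unaffected.
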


\begin{proof}
    To prove the result, we use Dulac's Theorem (see Proposition 7.2.6. in \cite{schaeffer2016ordinary}). The theorem states that if there is a $C^1$ function $g:(\R_+^*)^2 \to \R^2$ such that the divergence $\frac{\partial }{\partial N_1}(g(N_1,N_2) \frac{\dd N_1}{\dd t}) +\frac{\partial }{\partial N_2}(g(N_1,N_2) \frac{\dd N_2}{\dd t})$ is non-negative and not identically $0$ on any open subset of $(\R_+^*)^2$, then system \eqref{eq:syst_const} has no periodic solution lying entirely in $(\R_+^*)^2$. 

    We introduce $g(x,y) := \frac{1}{xy}$, $g$ is defined and $C^1$ on $(\R_+^*)^2$ and we have 
    \begin{align*}
        \frac{\partial}{\partial N_1}(g(N_1,N_2) \frac{\dd N_1}{\dd t}) &=  \frac{\partial}{\partial N_1}\Big(-\frac{\ks}{N_2} - \frac{\CtotNS N_1}{N_2} - \CtotS + \frac{b}{N_2} + \frac{\tilde{b}}{N_1}\Big)\\
        & = -\frac{\CtotNS}{N_2} - \frac{\tilde{b}}{N_1^2} < 0. 
    \end{align*}
Furthermore, since Assumption \ref{assump:bio} implies $k > 0$, 
 \begin{align*}
        \frac{\partial}{\partial N_2}(g(N_1,N_2) \frac{\dd N_2}{\dd t}) &=  \frac{\partial}{\partial N_2}\Big(-\frac{\kd}{N_1} - \frac{\CDSS N_2}{N_1} - \CDSNS + \frac{\ks}{N_2} + \frac{\CSNS}{N_1} + \CSS \Big)\\
        & = -\frac{\CDSS}{N_1} - \frac{\ks}{N_2^2} < 0. 
    \end{align*}

Thus $\frac{\partial d}{\partial N_1}(g(N_1,N_2) \frac{\dd N_1}{\dd t}) +\frac{\partial d}{\partial N_2}(g(N_1,N_2) \frac{\dd N_2}{\dd t}) < 0$ and system \eqref{eq:syst_const} has no periodic orbit in $(\R_+^*)^2$. Since $(0,0)$ is a steady state of \eqref{eq:syst_const}, it cannot be on a periodic orbit. Thus, there is no periodic orbit entirely within $\R_+^2$. 
\end{proof}

\begin{thm} \label{thm:ODE_pos}
     Assume $\CtotNS  > 0, \CDSS >0$,and  $b/\ks + \tilde{b}/\kd > 1 $, and $N_1(0) > 0$.
     Under Assumptions \ref{assump:bio} and \ref{assump:positive_compet} and for $\tilde{b}$ small enough, there is a single globally stable positive steady state $(N_1^*, N_2^*)$ to \eqref{eq:syst_const}.
     Furthermore, if $b-\ks < 0$, $N_1^* $ and $N_2^*$ are of order $O(\tilde{b})$ when $\tilde{b} \to 0$. 

\end{thm}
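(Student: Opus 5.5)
Note first that the system \eqref{eq:syst_const} as printed has misplaced parentheses. I read it in its intended form
\begin{align*}
\dot N_1 &= (b-\ks)N_1 + \tilde b N_2 - (\CtotNS N_1+\CtotS N_2)N_1,\\
\dot N_2 &= \ks N_1 - \kd N_2 + (\CSNS N_1+\CSS N_2)N_1 - (\CDSNS N_1+\CDSS N_2)N_2 .
\end{align*}
The plan has three parts: locate the positive steady states as intersections of two nullclines, prove uniqueness and local stability for $\tilde b$ small, then conclude global stability with Theorem \ref{thm:orbit} and Poincaré--Bendixson.

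\textbf{Invariance and boundedness.} On $N_1=0$ we have $\dot N_1=\tilde b N_2\ge 0$, and on $N_2=0$ we have $\dot N_2=(\ks+\CSNS N_1)N_1\ge0$. Hence $\R_+^2$ is invariant. If $N_1(0)>0$, then $N_1(t)>0$ for all $t$, and $N_2(t)>0$ for $t>0$. For boundedness, the same argument as in Proposition \ref{prop:bound_wcompker} applies: since $\CtotNS,\CDSS>0$, the sum $N_1+N_2$ satisfies a differential inequality with a quadratic negative term, so Lemma \ref{lemma:eq_diff} gives a compact absorbing set $K\subset\R_+^2$. This is routine.

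\textbf{Steady states.} The $N_1$-nullcline gives
\[
N_2=\frac{N_1(\CtotNS N_1-(b-\ks))}{\tilde b-\CtotS N_1},
\]
and substituting into the $N_2$-nullcline yields a polynomial equation in $N_1$ of bounded degree. Rather than solve it, I would argue perturbatively in $\tilde b$.
\begin{itemize}
\item \emph{Case $b>\ks$.} At $\tilde b=0$ the positive equilibrium is $N_1^0=(b-\ks)/\CtotNS$, with $N_2^0>0$ the positive root of the second equation. It is hyperbolic: the Jacobian is lower triangular with diagonal entries $-(b-\ks)<0$ and $\partial_{N_2}\dot N_2<0$. The implicit function theorem then gives a unique nearby equilibrium for small $\tilde b$, which is locally asymptotically stable.
\item \emph{Case $b<\ks$.} The condition $b/\ks+\tilde b/\kd>1$ forces $\tilde b>\kd(1-b/\ks)$, which is \emph{not} small relative to $\kd$. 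So "small" must be understood as small compared to the competition constants, with $\kd$ of comparable size. I would rescale $N_i=\tilde b\, x_i$. To leading order the equations become linear in $x$ plus quadratic terms carrying a factor $\tilde b$. The equilibrium is then determined by the linear balance $(b-\ks)x_1+x_2=\ldots$, which yields $N_i^*=O(\tilde b)$. This is the claimed scaling, and it comes with a unique positive root and a negative-trace, positive-determinant Jacobian.
\end{itemize}
Uniqueness of the positive intersection I would verify by monotonicity: on the region where both $N_1,N_2>0$, the $N_1$-nullcline is an increasing graph and the $N_2$-nullcline is a graph whose crossing with it is unique for small $\tilde b$.

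\textbf{Global stability.} The origin is a saddle or unstable node. Its Jacobian is
\[
\begin{pmatrix} b-\ks & \tilde b\\ \ks & -\kd\end{pmatrix},
\]
with determinant $-(b-\ks)\kd-\tilde b\ks<0$ exactly when $b/\ks+\tilde b/\kd>1$, so it is a saddle. Its stable manifold is tangent to an eigenvector with components of opposite sign, so it lies outside the open quadrant, and no trajectory with $N_1(0)>0$ can converge to $(0,0)$.

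Every trajectory with $N_1(0)>0$ stays in the compact set $K$, so its $\omega$-limit set is nonempty and compact. Theorem \ref{thm:orbit} excludes periodic orbits. A homoclinic or heteroclinic cycle would have to pass through the origin, which is impossible since the saddle's stable manifold avoids the quadrant; its only other equilibrium is the interior one, which is a sink. Hence, by Poincaré--Bendixson, the $\omega$-limit set is $\{(N_1^*,N_2^*)\}$.

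\textbf{Main obstacle.} The hardest step is proving uniqueness of the positive equilibrium, given that the resultant polynomial can have up to three or four roots. I would first try the nullcline monotonicity argument above, and fall back on the perturbation from $\tilde b=0$ with a uniform bound that keeps spurious roots away from the positive quadrant.
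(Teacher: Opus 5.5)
\textbf{Verdict: genuine gap.} Your invariance step, the saddle at the origin and the Dulac/Poincar\'e--Bendixson conclusion are fine. But all of it rests on the interior equilibrium being unique and a sink, and that is exactly what you leave unproved.

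\textbf{Case $b>\ks$.} The two facts you rely on are wrong as stated.
\begin{itemize}
\item Take $\CtotS>0$ and $\tilde{b}\CtotNS<\CtotS(b-\ks)$. The positive branch of the $N_1$-nullcline, $N_2=N_1(\CtotNS N_1-(b-\ks))/(\tilde{b}-\CtotS N_1)$, lives on $(\tilde{b}/\CtotS,(b-\ks)/\CtotNS)$. It is strictly \emph{decreasing} from $+\infty$ to $0$, not increasing: the numerator of its derivative, $-\CtotNS\CtotS N_1^2+2\CtotNS\tilde{b} N_1-(b-\ks)\tilde{b}$, has negative discriminant.
\item At $\tilde{b}=0$ the $N_1$-nullcline is the line $\CtotNS N_1+\CtotS N_2=b-\ks$, not $N_1=(b-\ks)/\CtotNS$. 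So the Jacobian has $(1,2)$ entry $-\CtotS N_1^0\neq 0$ and is not triangular.
\end{itemize}
This case is repairable, and more simply than in the paper. The $N_2$-nullcline is a graph over $N_1\ge 0$ through the origin. On it one has $\partial_{N_1}\dot N_2=(\CSNS N_1^2+\kd N_2+\CDSS N_2^2)/N_1>0$ and $\partial_{N_2}\dot N_2=-(\ks N_1+\CSNS N_1^2)/N_2-\CDSS N_2<0$. Hence it is strictly increasing and meets the decreasing branch exactly once. At that point $\partial_{N_1}\dot N_1=-\CtotNS N_1-\tilde{b} N_2/N_1$ and $\partial_{N_2}\dot N_1=\tilde{b}-\CtotS N_1<0$, so the trace is negative and the determinant positive. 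This would replace the paper's resultant/Descartes analysis for Case~1 (subcases on the sign of $A_3$, evaluating $P$ at $(b-\ks)/\CtotS$, eliminating the larger root). The case $\CtotS=0$ would need a separate slope comparison. But you have to state and prove this; as written it is not there.

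\textbf{Case $b<\ks$.} Here the gap is more serious. Both positive branches are increasing (the $N_1$-nullcline now lives on $(0,\tilde{b}/\CtotS)$), so monotonicity gives a crossing but not its uniqueness.

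Moreover, with $b,\ks,\kd$ fixed the hypothesis forces $\tilde{b}>\kd(1-b/\ks)$, so some other parameter must move as $\tilde{b}\to 0$. The paper takes $0\le \ks-b\le \ks\tilde{b}/\kd$, a near-threshold regime, and proves uniqueness there from the cubic $P$:
\begin{itemize}
\item roots bounded away from $0$ force $N_1\sim\tilde{b}/\CtotS$, and then $\CDSS N_2^2+\kd N_2\approx 0$, a contradiction;
\item two roots tending to $0$ are excluded, because a critical point of $P$ between them stays above a fixed $S>0$ (since $A_1<-D<0$).
\end{itemize}
This then yields $N_1^*\sim(\tilde{b}\ks+\kd(b-\ks))/(\CtotNS\kd+\CtotS\ks)$ and $N_2^*\sim(\ks/\kd)N_1^*$.

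Your rescaling $N_i=\tilde{b} x_i$ does not substitute for this. After dividing by $\tilde{b}$, the linear part is the origin's Jacobian. Its determinant $\kd(\ks-b)-\tilde{b}\ks$ is of order one unless you are near threshold, so the leading-order equation only gives $x=0$. A nontrivial $O(\tilde{b})$ equilibrium requires that determinant itself to be $O(\tilde{b})$. Under your own reading ($\kd$ comparable to $\tilde{b}$) one gets $N_2^*=O(\tilde{b})$ but $N_1^*=O(\tilde{b}^2)$, so the uniform scaling is off. The uniqueness and Jacobian signs you announce are asserted, not derived.

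\textbf{Minor points.} The case $b=\ks$ is not covered. In $N_1+N_2$ the $\CSNS N_1^2$ terms cancel, leaving only $\CDNS N_1^2$ as damping, so your boundedness step needs $\CDNS>0$ or a two-step argument.
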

\begin{proof}
\textbf{Step 1: strict positivity of the steady state(s)}. Since $\frac{b}{\ks} + \frac{\tilde{b}}{\kd}>1$, Assumption \ref{ass:for_lambda_0} is verified and we can apply the result for the upper bound shown in Proposition \ref{prop:bound_wcompker} and by Proposition \ref{prop:positivity}, the solutions to \eqref{eq:syst_const} stay positive. Thus, the system is non-negative and cannot go to infinity. By the Poincaré-Bendinxson Theorem (see e.g. \cite{strogatz2024nonlinear} p.150) there is either at least one stable positive steady state or a periodic orbit. As the solutions to \eqref{eq:syst_const}  are positive and  periodic orbits in $\R_+^2$ were ruled out by Theorem \ref{thm:orbit}, there is at least one stable positive steady state. 

We first rule out $(0,0)$ as a stable steady state.
If $\tilde{b} \neq 0$ and $b \neq 0$, the strictly positive lower bound of Proposition \ref{prop:bound_wcompker} also holds and $(0,0)$ cannot be a stable steady state. 
If $\tilde{b} = 0$, the linearisation of the system around $(0,0)$ shows that the eigenvalues are $b-\ks$ and $\kd$, which are both positive (since $\tilde{b} = 0$, $\frac{b}{\ks} > 1$ by assumption) and thus $(0,0)$ is not stable. 
If $b = 0$, the eigenvalues of the linearised system at $(0,0)$ are $\frac{1}{2}( \pm \sqrt{(\ks - \kd)^2 +4\ks \tilde{b}} - (\ks + \kd))$, one of which is positive and $(0,0)$ is not a stable steady state.   

Pairs where either $N_1^* = 0$ or $N_2^* = 0$ are never steady states of our system. We are therefore looking for steady states $(N_1^*, N_2^*)$ where each component is strictly positive.\\ 

\textbf{Step 2: uniqueness of the steady states.}
By setting system \eqref{eq:syst_const} to steady state, we find that potential steady state(s) $(N_1^*, N_2^*)$ are characterised by the following polynomial system 
    \begin{subnumcases}
       {} \CtotNS  (N_1^*)^2 + N_1^*(\ks + \CtotS  N_2^* - b) -\tilde{b} N_2^* = 0     \label{eq:syst_2tead_const1}\\
       -\CSNS(N_1^*)^2 + N_1^*(N_2^*(\CDSNS - \CSS) - \ks) +(N_2^*)^2\CDSS + N_2^* \kd = 0.     \label{eq:syst_2tead_const2}
    \end{subnumcases}

We now study the existence of positive values $N_2^*$ such that equations \eqref{eq:syst_2tead_const1} and \eqref{eq:syst_2tead_const2} considered as polynomials in $N_1^*$ have a common root. To do so, we study the resultant of \eqref{eq:syst_2tead_const1} and \eqref{eq:syst_2tead_const2} (considered as polynomials in $N_1^*$). The resultant is a polynomial function of the coefficients, and thus a polynomial in $N_2^*$. The values of $N_2^*$ for which the resultant is $0$ are exactly the values of $N_2^*$ for which \eqref{eq:syst_2tead_const1} and \eqref{eq:syst_2tead_const2} have at least one common root $N_1^*$, positive or negative. There will then remain to determine for which positive values of $N_2^*$ such that the resultant cancels out the common root is positive. 

The resultant of the two polynomials is 
\begin{align*}
     &\begin{vmatrix}
         \CtotNS  &  0& -\CSNS &0\\
        \ks-b + \CtotS  N_2^* & \CtotNS & N_2^*(\CDSNS - \CSS) -\ks& -\CSNS\\
        -\tilde{b} N_2^* & \ks-b + \CtotS N_2^*& (N_2^*)^2 \CDSS + N_2^* \kd&N_2^*(\CDSNS - \CSS) -\ks \\
        0 & -\tilde{b}N_2^* & 0 & (N_2^*)^2 \CDSS + N_2^* \kd
    \end{vmatrix} \\
    & =: -N_2^* [A_3 (N_2^*)^3 + A_2(N_2^*)^2 + A_1 N_2^* + A_0]\\
    & =: -N_2^* P(N_2^*), 
\end{align*}
where 
\begin{align*}
    &A_3 = \CtotNS \CtotS  \CDSS(\CDSNS - \CSS) - (\CtotNS )^2 (\CDSS)^2+ \CSNS(\CtotS )^2\CDSS,\\
    &A_2 = 2\CtotNS \CSNS \tilde{b} \CDSS+ \CtotNS \CtotS \kd(\CDSNS - \CSS) - \CtotNS \CDSS((b-\ks)(\CDSNS - \CSS) + \ks\CtotS ) \\
    &\quad - 2 \kd (\CtotNS )^2 \CDSS + \tilde{b}\CtotNS (\CDSNS - \CSS)^2 +\tilde{b}(\CDSNS - \CSS)\CSNS\CtotS  
   + \CSNS \kd(\CtotS )^2 \\
   & \quad - 2(b-\ks)\CtotS \CSNS\CDSS, \\
    &A_1 = \CtotNS \tilde{b}\ks\CSNS -\CtotNS \kd((b-\ks)(\CDSNS - \CSS) + \ks \CtotS ) +  \CtotNS \CDSS \ks(b-\ks) \\
    & \quad - \kd^2 (\CtotNS )^2 - 2\tilde{b}\ks\CtotNS (\CDSNS - \CSS) -  \CSNS \tilde{b} ((b-\ks)(\CDSNS - \CSS) + \ks\CtotS ) \\
    & \quad + \CSNS \kd \tilde{b} \CtotNS  - 2 (b-\ks)\kd \CSNS\CtotS  + \CSNS\CDSS (b-\ks)^2 +(\tilde{b})^2(\CSNS)^2,\\
    & A_0 = \ks \kd\Big(\frac{\tilde{b}}{\kd}+ \frac{b}{\ks}-1\Big)(\CDNS\ks + b\CSNS).
\end{align*}
We notice that $A_0 > 0$ since $\frac{\tilde{b}}{\kd} + \frac{b}{\ks} > 1$ by assumption.

\textbf{Case 1 : $b-k > 0$}\\
\textbf{Subcase 1 : $A_3 > 0$.}
This condition implies $\CtotS  > 0$. By Descartes' rule of sign, if $A_0$ and $A_3$ have the same sign, there are either $0$ or $2$ positive roots to the equation $A_3 X^3 + A_2X^2 + A_1 X + A_0=0$ depending on the sign of the discriminant. However, as we know that there must be a positive steady state, there must be 2 roots to the resultant in that case. We denote these two roots $M_2 \leq  N_2$ and $M_1, N_1$ the corresponding positive solutions of \eqref{eq:syst_2tead_const1} (one can easily check that $M_1$ and $N_1$ are unique).

 We notice that $P$ evaluated at $Y = \frac{b-\ks}{\CtotS }$ is such that
\begin{align*}
    P\Big(\frac{b-\ks}{\CtotS }\Big) =&  -Y\Big( \CtotNS  (Y\CDSS + \kd) + \tilde{b}\CSNS \Big)^2 + \tilde{b}\CtotNS (Y(\CDSNS - \CSNS) - \ks)^2  
\end{align*}
which is strictly smaller than a negative constant for $\tilde{b}$ small enough. 
Hence, since we are considering the case $A_3 >0$ and $b-\ks > 0$, necessarily $\frac{b-\ks}{\CtotS} \in ]M_2, N_2[$. More precisely, we have two positive constants $\alpha, \beta$ such that 
\begin{equation*}
    M_2 < \alpha < \frac{b-\ks}{\CtotS } < \beta < N_2. 
\end{equation*}
We know that to $N_2$ and $M_2$ corresponds at least one common root of \eqref{eq:syst_2tead_const1} and \eqref{eq:syst_2tead_const2} seen as polynomials in $N_1^*$, but this common root is not necessarily positive. 
We now study the positive solutions $M_1$ and $N_1$ to \eqref{eq:syst_2tead_const1} corresponding respectively to $M_2$ and $N_2$, and check wether they also verify \eqref{eq:syst_2tead_const2}. We have, 
\begin{align}
    &M_1 = \frac{b-\ks- \CtotS  M_2}{2\CtotNS }\Big(1+\sqrt{1 + 4\frac{\CtotNS \tilde{b}M_2}{(\ks - b + \CtotS  M_2)^2}} \Big) \geq \frac{b - \ks - \CtotS  \alpha}{\CtotNS } \label{eq:M1},\\
    &N_1 =\frac{b - \ks - \CtotS  N_2}{2\CtotNS }\Big(1-\sqrt{1 + 4\frac{\CtotNS \tilde{b}N_2}{(\ks - b + \CtotS  N_2)^2}} \Big) \leq  \frac{2\tilde{b}N_2}{\ks - b + \CtotS  \beta}.
\end{align}
Plugging back $N_1$ into \eqref{eq:syst_2tead_const2} and using the previous upper bound yields
\begin{align*}
    0 \geq &N_2^2\Big(\CDSS - 4\CSNS(\tilde{b})^2 \frac{1}{(\ks-b +\CtotS \beta)^2} + \frac{4\tilde{b}}{\ks-b +\CtotS \beta}\min(0, (\CDSNS - \CSS)) \Big)\\
    & + N_2\Big(\kd - \frac{2\tilde{b}\ks}{\ks-b +\CtotS \beta}\Big). 
\end{align*}
We see that for $\tilde{b}$ small enough such that both coefficients are positive, the previous inequality cannot hold since $N_2 >0$ and thus $N_1$ cannot be a root of \eqref{eq:syst_2tead_const2} (the common root of both equations for that value of $N_2$ is therefore a negative root), hence $(N_1, N_2)$ cannot be a steady state. 

Once again invoking the Poincaré-Bendinxson Theorem,  $(M_1, M_2)$ has to be a steady state, and thus solution of \eqref{eq:syst_2tead_const1} and \eqref{eq:syst_2tead_const2}. Since there are no periodic orbits, it is globally asymptotically stable. 
\textbf{Subcase 2 :$A_3 < 0$.} We notice that
\begin{align*}
    A_2 &= \frac{\kd}{\CDSS}A_3 + \frac{\CDSS}{\kd}A_1 -\frac{(b-\ks)(\CDSS)^2}{\kd}(b\CSNS + \ks\CDNS)\\
    &  + ( \CDSS \CSNS \CtotNS  +(\CDSNS - \CSS)[\CtotNS (\CDSNS - \CSS) + \CtotS  \CSNS + 2\CtotNS \frac{\ks}{\kd}\CDSS])\tilde{b}\\
    &+ \CSNS\frac{\CDSS}{\kd} ((b-\ks)(\CDSNS - \CSS) + \ks\CtotS )  \tilde{b} -(\tilde{b})^2\frac{\CDSS}{\kd} (\CSNS)^2. 
\end{align*}
Hence for $\tilde{b}$ small enough, we cannot have $ A_3 \leq 0, A_1 \leq 0,$ and $A_2 \geq 0$ since $b-\ks > 0$. Thus either $A_1 \geq 0$ and $A_2 \leq 0$ or $A_2$ and $A_1$ have the same sign.

In any case, by Descarte's rule of sign, there is only one positive root $M_2$ to the resultant hence one positive steady state.

\textbf{Subcase 3 : $A_3 = 0$}. Since by assumption $ \CtotNS  \neq 0 , \CDSS \neq 0 \implies \CtotS  \neq 0$,  $A_3 = 0 \implies \CtotS  \neq 0$. Thus at first order in $\tilde{b}$, we can express $A_2$ as 
\begin{equation*}
    A_2 = \Big(\frac{\kd}{\CDSS}-\frac{(b-\ks)}{\CtotS } \Big)A_3 - (b-\ks) \CtotS  \CSNS \CDSS -\frac{(b-\ks)}{\CtotS }(\CDSS)^2(\CtotNS )^2-\CDSS\CtotNS (\kd \CtotNS + \ks\CtotS ).
\end{equation*}
Thus, $A_3 = 0 \implies A_2 < 0$ since $b-\ks >0$ and Descarte's rule of sign allows to conclude that there is still only one positive steady state, which is necessarily globally stable.\\

\textbf{Case 2 : $b-k \leq 0$.} In that case, we have by assumption $\ks-b \leq \ks\frac{\tilde{b}}{\kd}$. Hence we notice that $\tilde{b}$ and $\ks-b$ are of the same order when $\tilde{b} \to 0$. 
The positive solutions of \eqref{eq:syst_2tead_const1} corresponding to a value $N_2$ are such that 
\begin{align}
    N_1 =-\frac{\ks - b + \CtotS  N_2}{2\CtotNS }\Big(1-\sqrt{1 + 4\frac{\CtotNS \tilde{b}N_2}{(\ks - b + \CtotS  N_2)^2}} \Big)\label{eq:N1}.
\end{align}

Notice that with the expression of $N_1$ in \eqref{eq:N1}, if $N_2$ stays strictly greater than a positive constant when $\tilde{b}\to 0$ we have $\ks - b + \CtotS  N_2 \sim \CtotS  N_2 $ when $\tilde{b} \to 0$. Therefore at first order in $\tilde{b}$, we have 
\begin{equation*}
  N_1 \sim  \frac{\tilde{b}}{\CtotS }.
\end{equation*}
Plugging back into \eqref{eq:syst_2tead_const2}, at first order in $\tilde{b}$, this yields
\begin{equation*}
    (N_2)^2\CDSS + N_2\kd = 0
\end{equation*}
which is a contradiction with the fact that $N_2$ stays greater than a strictly positive constant and $(N_1,N_2)$ cannot be a steady state. 

We now show that as $\tilde{b} \to 0$, there can be at most one positive solution of $P$ that does not stay strictly greater than a positive constant independent of $\tilde{b}$, which will then allow us to conclude that there is only one positive solution to \eqref{eq:syst_2tead_const1} and \eqref{eq:syst_2tead_const2} with the argument above.

Indeed, if $M_2$ and $N_2$ are two positive roots of $P$ such that $N_2 > M_2$ there is a positive solution $S(\tilde{b})$ to the equation $P'(X) = 0$ in the interval $[M_2, N_2]$. We will show that $S(\tilde{b})$ stays greater than a positive constant independent of $\tilde{b}$ when $\tilde{b} \to 0$.
By differentiating $P$, we have
\begin{align*}
     3A_3S(\tilde{b})^2 + 2A_2S(\tilde{b}) + A_1 = 0.
\end{align*}

However,
\begin{align*}
   A_1 =&  -\kd \ks\CtotNS \CtotS  - \kd^2(\CtotNS )^2  \\
   & + \tilde{b}\Big(\CtotNS \ks\CSNS- 2\ks\CtotNS (\CDSNS - \CSS) -  \CSNS ((b-\ks)(\CDSNS - \CSS) + \ks\CtotS ) \CSNS \kd \CtotNS  +\tilde{b}(\CSNS)^2 \Big)\\
   & + (\ks-b) \Big(\CtotNS \kd(\CDSNS - \CSS) -\CtotNS \CDSS \ks + 2 \kd \CSNS\CtotS  +\CSNS\CDSS (\ks-b)\Big).
\end{align*}

Thus, we can consider $\tilde{b}$ small enough (and thus $k-b$ also) such that 
\begin{align*}
    &A_1 < - \frac{1}{2}(\kd \ks\CtotNS \CtotS  + \kd^2(\CtotNS )^2) := -D < 0 \text{ and }\\
    &|A_2| < 2 |\CtotNS \CtotS \kd(\CDSNS - \CSS) - \CtotNS \CDSS \ks\CtotS  - 2 \kd (\CtotNS )^2 \CDSS  + \CSNS \kd(\CtotS )^2 |:= C
\end{align*}
 where $C$ and $D$ are positive constants that do not depend on $\tilde{b}$. We have 
\begin{align*}
  &0 < D < -A_1 = 3A_3S(\tilde{b})^2 + 2A_2S(\tilde{b})  \leq  3|A_3|S(\tilde{b})^2 + 2|A_2| S(\tilde{b})  < 3|A_3|S(\tilde{b})^2 +2C S(\tilde{b}). 
\end{align*}
Let $S >0$ be such that $3|A_3|S^2 +2C S = D$, $S$ is strictly positive and does not depend on $\tilde{b}$ and we have $S(\tilde{b}) \geq S$ and thus $N_2 \geq S$.


Hence $(N_1,N_2)$ cannot be a solution of \eqref{eq:syst_2tead_const1} and \eqref{eq:syst_2tead_const2} and the (stable) steady state corresponds to the smallest positive root of $P$, $M_2$.

\textbf{Step 3: asymptotic behavior when $b-k \leq 0$.} We now prove that the positive steady state $(N_1^*, N_2^*)$ converges to $0$ at speed $O(\tilde{b})$  when $b-\ks \leq 0$. Indeed, we first suppose that $N_1^*$ does not converge to $0$. This means that 
\begin{equation*}
    \exists C >0, \forall \varepsilon, \exists \tilde{b}_{\varepsilon} \leq \varepsilon, N_1^*(\tilde{b}_{\varepsilon}) > C.
\end{equation*}
Equation \eqref{eq:syst_2tead_const1} implies that 
\begin{align*}
     \varepsilon N_2^*(\tilde{b}_{\epsilon}) \geq \tilde{b}N_2^*(\tilde{b}_{\epsilon}) + (b-\ks)N_1^*(\tilde{b}_{\epsilon}) = \CtotNS (N_1^*(\tilde{b}_{\epsilon}))^2 + \CtotS  N_1^*(\tilde{b}_{\epsilon})N_2^*(\tilde{b}_{\epsilon})  \geq C^2\CtotNS .
\end{align*}
It follows that $N_2^*(\tilde{b}_{\epsilon}) \xrightarrow[\varepsilon \to 0]{} + \infty$.
If $\CtotS \neq 0$ we have,
\begin{equation*}
    N_1^*(\tilde{b}_{\epsilon}) = -\frac{\ks - b + \CtotS  N_2(\tilde{b}_{\epsilon})}{2\CtotNS }\Big(1-\sqrt{1 + 4\frac{\CtotNS \tilde{b}_{\varepsilon }N_2(\tilde{b}_{\epsilon})}{(\ks - b + \CtotS  N_2(\tilde{b}_{\epsilon}))^2}} \Big)\sim \frac{\tilde{b}_{\epsilon}}{\CtotS }
\end{equation*}
which is a contradiction with $N_1^*(\tilde{b}_{\epsilon}) > C$.
If $\CtotS = 0$, 
\begin{equation*}
    N_1^*(\tilde{b}_{\epsilon}) \sim \sqrt{ \frac{\tilde{b}(k-b) N_2^*(\tilde{b}_{\varepsilon})}{\CtotNS}} = o(N_2^*(\tilde{b}_{\varepsilon}))
\end{equation*}
and \eqref{eq:syst_2tead_const2} yields $(N_2^*(\tilde{b}_{\varepsilon}))^2 \CDSS = 0$, which is also a contradiction. Hence $N_1^*(\tilde{b}) \xrightarrow[\tilde{b} \to 0]{} 0$.

With \eqref{eq:syst_2tead_const2}, we obtain 
\begin{equation*}
    N_2^* = -\frac{\kd + N_1^*(\CDSNS- \CSS)}{2 \CDSS}\Big( 1 - \sqrt{1 + 4\frac{\CDSS (\ks N_1^* + \CSNS(N_1^*)^2}{(\kd + N_1^*(\CDSNS- \CSS))^2 }} \Big)\sim \frac{\ks}{\kd}N_1^*.
\end{equation*}
Thus $N_2^*(\tilde{b}) \xrightarrow[\tilde{b} \to 0]{} 0$.
Plugging that into \eqref{eq:syst_2tead_const1}, we obtain
\begin{align*}
    (N_1^*)^2\Big(\CtotNS  + \CtotS   \frac{\ks}{\kd}\Big) \sim \CtotNS  (N_1^*)^2 + \CtotS  N_1^* N_2^* = \tilde{b} N_2^* + N_1^*(b-\ks) \sim (\tilde{b}\frac{\ks}{\kd} + b- \ks) N_1^*. 
\end{align*}
And since by assumption $\tilde{b}\frac{\ks}{\kd} + b- \ks > 0$ and $\CtotNS  + \CtotS   \frac{\ks}{\kd} > 0$, this means that 
\begin{equation*}
    N_1^* \sim \frac{\tilde{b}\ks + \kd(b- \ks) }{\CtotNS \kd + \CtotS   \ks } \qquad N_2^* \sim \frac{\ks}{\kd}\frac{\tilde{b}\ks + \kd(b- \ks) }{\CtotNS \kd + \CtotS   \ks }. 
\end{equation*}
Thus, the steady state value if $O(\tilde{b})$.

\end{proof}

\begin{remark}\label{rem:comp_condi_classical}
    In a classical one phase age-structured model, with no competition and with birth and death rates denoted respectively $b$ and $d$, the non-extinction condition is $\int_0^{+\infty} b(a) e^{-\int_0^{a} d(u) \dd u} \dd a > 1$ (i.e. $b \geq d$ if the parameters are constant), see e.g. \cite{Perthame_Tumulari2008}. In comparison here, the condition holds on all four coefficients $b, \ks, \tilde{b}, \kd$ and not just $b,\tilde{b}$ and $\kd$. The condition $b/\ks + \tilde{b}/\kd >1$ can be interpreted, as follows: the condition $b/\ks >1$ ensures non extinction of the population in phase 1 and  $\tilde{b}/\kd >1$ non-extinction of the population in phase 2, which are both sufficient conditions for non-extinction of the total population. 
  In particular, the presence of $\tilde{b}\neq 0$ allows non-extinction of the population even if $b < \ks$. 
\end{remark}

We now consider the case where the non-extinction assumption \ref{ass:for_lambda_0} is not verified and the population goes to extinction. 
\begin{thm}\label{thm:ODE_0}
     Assume $\CtotNS  > 0, \CDSS >0,b/\ks + \tilde{b}/\kd < 1 $.
     Under Assumptions \ref{assump:bio} and \ref{assump:positive_compet} and for $\tilde{b}$ small enough, $(0,0)$ is the unique steady state and it is stable. Furthermore, all trajectories converge to $0$. 
\end{thm}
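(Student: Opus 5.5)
The plan is to build a linear Lyapunov function $V(t) = N_1(t) + \alpha N_2(t)$ with a weight $\alpha>0$ chosen so that $V$ decays exponentially along every trajectory. All three claims (uniqueness of the steady state, its stability, global convergence to $0$) then follow at once.

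First, by Proposition \ref{prop:positivity}, whose hypotheses hold for linear competition by Remark \ref{rem:compet_less}, trajectories starting in $\R_+^2$ stay in $\R_+^2$. So it is enough to work with $N_1,N_2\geq 0$, where $V$ is equivalent to the $\ell^1$ norm. Differentiating $V$ along \eqref{eq:syst_const} and grouping terms gives
\begin{align*}
\frac{\dd V}{\dd t} = &\big(b-\ks+\alpha\ks\big)N_1 + \big(\tilde{b}-\alpha\kd\big)N_2 + \big(\alpha\CSNS-\CtotNS\big)N_1^2 \\
&+ \big(\alpha\CSS-\CtotS-\alpha\CDSNS\big)N_1N_2 - \alpha\CDSS N_2^2 .
\end{align*}

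The key observation concerns the only positive nonlinear contribution, the competition-induced transition $(\CSNS N_1+\CSS N_2)N_1$ into phase 2. It is exactly compensated by the corresponding loss from phase 1, because $\CtotNS=\CSNS+\CDNS\geq\CSNS$ and $\CtotS=\CSS+\CDS\geq\CSS$. So for any $\alpha\in(0,1]$ all quadratic coefficients are nonpositive.

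It remains to make the linear part strictly negative. We need
\begin{equation*}
b-\ks+\alpha\ks<0 \quad\text{and}\quad \tilde{b}-\alpha\kd<0, \quad\text{i.e.}\quad \frac{\tilde{b}}{\kd}<\alpha<1-\frac{b}{\ks}.
\end{equation*}
Such an $\alpha\in(0,1)$ exists precisely because $b/\ks+\tilde{b}/\kd<1$; for instance, take the midpoint of that interval. With this choice, set $\delta := \min\big(\ks-b-\alpha\ks,\ \kd-\tilde{b}/\alpha\big)>0$. Then $\frac{\dd V}{\dd t}\leq -\delta V$, hence $V(t)\leq V(0)e^{-\delta t}$, and both $N_1$ and $N_2$ converge to $0$ exponentially for every nonnegative initial condition.

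The conclusions follow directly from this bound.
\begin{itemize}
\item \emph{Uniqueness.} A nonnegative steady state has constant $V$, but $0=\frac{\dd V}{\dd t}\leq-\delta V$, which forces $V=0$. So $(0,0)$ is the only steady state in $\R_+^2$.
\item \emph{Stability.} Since $V(t)\leq V(0)$ and $V$ is a norm on $\R_+^2$, $(0,0)$ is Lyapunov stable. Together with the convergence above it is globally asymptotically stable.
\item \emph{Consistency check.} Linearising at $(0,0)$ gives a matrix with trace $b-\ks-\kd<0$ and determinant $\kd(\ks-b)-\ks\tilde{b}=\ks\kd\big(1-b/\ks-\tilde{b}/\kd\big)>0$, so both eigenvalues have negative real part.
\end{itemize}

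The only delicate point, and the one I expect to be the main obstacle, is the sign of the cross term $\alpha\CSS-\CtotS-\alpha\CDSNS$ and of the $N_1^2$ term. These rely on the structural fact that competition-induced transitions are counted in $\CtotNS,\CtotS$, together with the restriction $\alpha\leq 1$. Since the admissible interval for $\alpha$ lies inside $(0,1)$, this should go through without needing $\tilde{b}$ small. The smallness assumption on $\tilde{b}$ in the statement is then only required for consistency with the framework of Theorem \ref{thm:ODE_pos}.
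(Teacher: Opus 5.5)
Your proof is correct, and it takes a genuinely different route from the paper's.

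\textbf{The paper's route.} It first excludes positive equilibria algebraically. It solves \eqref{eq:syst_2tead_const1} for $N_1$, bounds $N_1\le 2\tilde{b}N_2/(\ks-b)$, and substitutes this into \eqref{eq:syst_2tead_const2}, which gives a contradiction once $\tilde{b}$ is small enough. It then combines boundedness of trajectories, the absence of periodic orbits (Dulac, Theorem \ref{thm:orbit}) and a planar Poincar\'e--Bendixson-type argument to conclude that every trajectory tends to $(0,0)$. Stability of $(0,0)$ is left implicit there.

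\textbf{Your route.} The linear functional $V=N_1+\alpha N_2$, with $\tilde{b}/\kd<\alpha<1-b/\ks$, exploits the bookkeeping structure of the model. Competition-induced transitions are counted as losses from phase~1 through $\CtotNS\ge\CSNS$ and $\CtotS\ge\CSS$, and they re-enter phase~2 only with weight $\alpha\le 1$. This single estimate gives, at once:
\begin{itemize}
\item uniqueness of the nonnegative equilibrium;
\item Lyapunov stability;
\item a priori bounds, hence global existence;
\item global exponential convergence with an explicit rate $\delta$.
\end{itemize}
Your linearisation check additionally gives local asymptotic stability for perturbations leaving the quadrant.

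\textbf{Comparison.} Your argument is more elementary and strictly stronger for this statement. It needs neither smallness of $\tilde{b}$ nor $\CtotNS,\CDSS>0$, and it uses the threshold $b/\ks+\tilde{b}/\kd<1$ exactly. It also avoids planar topology, so it has a better chance of extending to age-structured versions, for instance with adjoint weights in place of constants. What the paper's route buys is mainly uniformity with the proof of Theorem \ref{thm:ODE_pos}. There the equilibrium is positive, no such sign-definite linear functional exists, and the algebraic and Poincar\'e--Bendixson machinery is genuinely needed.

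A minor remark: invariance of $\R_+^2$ also follows directly from quasi-positivity of the vector field. On $\{N_1=0\}$ one has $\dot N_1=\tilde{b}N_2\ge0$, and on $\{N_2=0\}$ one has $\dot N_2=\ks N_1+\CSNS N_1^2\ge0$. So the detour through Proposition \ref{prop:positivity} is unnecessary, though not wrong.
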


\begin{proof}

We first rule out the presence of a steady state other than $(0,0)$. As done in the proof of Theorem \ref{thm:ODE_pos}, a non-trivial positive steady state $(N_1, N_2)$ would verify \eqref{eq:syst_2tead_const1} and \eqref{eq:syst_2tead_const2}. By solving \eqref{eq:syst_2tead_const1}, this yields 
\begin{align*}
     N_1 =-\frac{\ks - b + \CtotS  N_2}{2\CtotNS }\Big(1-\sqrt{1 + 4\frac{\CtotNS \tilde{b}N_2}{(\ks - b + \CtotS  N_2)^2}} \Big) \leq  \frac{2\tilde{b}N_2}{\ks - b}. 
\end{align*}
Plugging back $N_1$ into \eqref{eq:syst_2tead_const2} and using the upper bound yields as before
\begin{align*}
    0 \geq &N_2^2\Big(\CDSS - 4\CSNS(\tilde{b})^2 \frac{1}{(\ks-b)^2} + \frac{4\tilde{b}}{\ks-b}\min(0, (\CDSNS - \CSS)) \Big) + N_2\Big(\kd - \frac{2\tilde{b}\ks}{\ks-b}\Big), 
\end{align*}
which is a contradiction for $\tilde{b}$ sufficiently small. Hence $(0,0)$ is the unique steady state. Since there are no periodic orbits by Theorem \ref{thm:orbit}, the Poincaré-Hopf theorem (see Theorem 6.8.2 in \cite{strogatz2024nonlinear}) implies that any trajectory converges to $(0,0)$. 
\end{proof}

\begin{remark}
\begin{itemize}
    \item  When $\tilde{b} = 0$ and $b-k>0$, there is a single stable positive steady state $(N_1^*, N_2^*)$ (see Theorem \ref{thm:unique_2S}), which verifies $N_1^* = \frac{b-k-\CtotS N_2^*}{\CtotNS}$. The relationship verified by the solutions in the case $\tilde{b} \neq 0$ and $b-k >0$ given by \eqref{eq:N1} can be seen as a perturbation of the previous equation. Similarly when $\tilde{b} \neq 0$ and $b-k \leq 0$, the vanishing property of the solutions can be interpreted as a perturbation of the system when $\tilde{b} = 0$ and $b-k \leq 0$ where $(0,0)$ is the unique (stable) steady state (see Section \ref{sec:PDE_ODE}). 
\item In the case of purely competitive systems (i.e. systems where the right-hand side of the equations of each compartment is decreasing with respect to the density of individuals in any other compartment), there cannot be sustained oscillations and any solution must either converge to a fixed point or diverge to infinity (see Theorem 3.4.1 in \cite{hofbauer1998evolutionary}). The result remains true for cooperative systems where the converse is true. As our model is neither competitive nor cooperative, such results cannot be used. 
\end{itemize}
\end{remark}

\section{Local and global stability for a PDE-ODE model}
\label{sec:PDE_ODE}

In this section, we introduce a simplified PDE-ODE version of system \eqref{syst:edp}, with assumptions different than the ones made in Section \ref{sec:ODE}. We first show the existence of steady states, and then study their local and global stability. 

\subsection{Steady state analysis}
In this section, we study the solutions of the following system which can be obtained from system \eqref{syst:edp} by assuming $\tilde{b} \equiv 0$, a constant natural death rate $\kd$ and linear competition rates as defined by equation \eqref{eq:lin_compet}. These assumptions are relevant from a biological perspective as experimental evidence shows that the Smurf individuals, which inspired phase 2 of this study, have a very reduced fertility rate ($\tilde{b}$) and a constant death rate (see \cite{tricoire_new_2015}). With these new assumptions, system \eqref{syst:edp} can be rewritten as:
\begin{equation}
    \begin{cases}
        &\frac{\partial \no(t,a) }{\partial t} + \frac{\partial \no(t,a) }{\partial a} = -(\ks(a)+ \CtotNS N_1(t) +\CtotS  N_2(t))\no(t,a) \\
        & \no(0,t) = \int_0^{+\infty}b(a) \no(t,a) \dd a\\
        &\frac{\partial \nt(t,a) }{\partial t} + \frac{\partial \nt(t,a) }{\partial a} = -(\kd+ \CDSNS N_1(t) +\CDSS N_2(t))\nt(t,a) \\
        & \nt(0,t) = \int_0^{+\infty}(\ks(a) + \CSNS N_1(t) + \CSS N_2(t))\no(t,a) \dd a\\
        & \no(a,0) = n_{1,0}(a) \in L^1(\R_+) \\
        & \nt(a,0) = n_{2,0}(a) \in L^1(\R_+)  .\label{eq:syst_linear}
    \end{cases}
\end{equation}
Where we recall that 
\begin{equation}
    N_1(t) = \int_0^{+\infty}\no(t,a) \dd a \text{ and }N_2(t) = \int_0^{+\infty}\nt(t,a) \dd a. \label{eq:def_N}
\end{equation}
In fact, the PDE on $\nt$ in the previous system can be reduced to an ODE by integration and the system can be rewritten as  

\begin{equation}
    \begin{cases}
        &\frac{\partial \no(t,a) }{\partial t} + \frac{\partial \no(t,a) }{\partial a} = -(\ks(a)+ \CtotNS N_1(t) +\CtotS  N_2(t))\no(t,a) \\
       & \no(0,t) = \int_0^{+\infty}b(a) \no(t,a)  \dd a\\
        &\frac{\dd N_2(t) }{\dd t}= -(\kd+ \CDSNS N_1(t) +\CDSS N_2(t))N_2(t) +\int_0^{+\infty}(\ks(a) + \CSNS N_1(t) + \CSS N_2(t))\no(t,a) \dd a \\
        & \no(a,0) = n_{1,0}(a) \in L^1(\R_+)\\
        & N_2(0) = N_{2,0} \geq 0  .\label{eq:syst_linear_PDEODE}
    \end{cases}
\end{equation}

We recall the definition of $\lambda_0$ as the unique solution to 
\begin{equation}
    \int_0^{+\infty} b(a)e^{-\int_0^{a} \ks(u) + \lambda_0 \dd u} \dd a = 1. \label{eq:lambda} 
\end{equation}

\begin{thm}\label{thm:unique_2S}
   Under Assumptions \ref{assump:bio}, \ref{assump:bounds}, \ref{ass:for_lambda_0}, \ref{assump:positive_compet} and if $\CtotNS  > 0$, then  \eqref{eq:syst_linear_PDEODE} has a unique non-trivial positive steady state $(\no^*, N_2^*)$. Furthermore, if $\kd >0 $ or $  \CDSS> 0$, $N_1^* = \int_0^{+\infty} \no^*(a) \dd a > 0$ and if $\ks \not \equiv 0$ or $\CSNS >0$, $N_2^* >0$. 
\end{thm}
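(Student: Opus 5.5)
We look for stationary solutions of \eqref{eq:syst_linear_PDEODE} directly. At steady state the transport equation on $\no$ is solved explicitly:
\begin{equation*}
\no^*(a) = \no^*(0)\, e^{-\int_0^a \ks(u)\dd u - \Lambda^* a}, \qquad \Lambda^* := \CtotNS N_1^* + \CtotS N_2^*.
\end{equation*}
A non-trivial steady state has $\no^*(0)>0$. Plugging this into the birth condition and dividing by $\no^*(0)$ gives $\int_0^{+\infty} b(a) e^{-\int_0^a \ks(u)\dd u - \Lambda^* a}\dd a = 1$. With $\tilde b\equiv 0$, Assumption \ref{ass:for_lambda_0} says this integral exceeds $1$ at $\Lambda=0$. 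The map $\Lambda\mapsto\int_0^{+\infty} b(a)e^{-\int_0^a\ks(u)\dd u-\Lambda a}\dd a$ is strictly decreasing and continuous, exactly as in the proof of Proposition \ref{prop:ex_phi}. Hence $\Lambda^*=\lambda_0>0$ is forced, with $\lambda_0$ given by \eqref{eq:lambda}. So every non-trivial steady state lies on the segment
\begin{equation*}
\CtotNS N_1 + \CtotS N_2 = \lambda_0, \qquad N_1,N_2\geq 0,
\end{equation*}
and, since $\CtotNS>0$, $N_1$ is the affine function $N_1(N_2)=(\lambda_0-\CtotS N_2)/\CtotNS$. Conversely, $N_1$ fixes $\no^*(0)=N_1/I_0$, where $I_0:=\int_0^{+\infty} e^{-\int_0^a\ks(u)\dd u-\lambda_0 a}\dd a$. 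Then $\no^*$ is entirely determined, and $\int_0^{+\infty}\ks\no^* \dd a = N_1 J_0/I_0$ with $J_0:=\int_0^{+\infty}\ks(a)e^{-\int_0^a\ks(u)\dd u-\lambda_0 a}\dd a$.

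The ODE for $N_2$ at equilibrium then reduces everything to one scalar equation $F(N_2)=0$, where
\begin{equation*}
F(N_2) := \big(\kd + \CDSNS N_1(N_2) + \CDSS N_2\big)N_2 - \Big(\tfrac{J_0}{I_0} + \CSNS N_1(N_2) + \CSS N_2\Big)N_1(N_2).
\end{equation*}
The unknown $N_2$ ranges over $[0,\lambda_0/\CtotS]$ if $\CtotS>0$, and over $[0,+\infty)$ otherwise. $F$ is a polynomial of degree at most $2$ in $N_2$. The existence and uniqueness argument uses only signs at the ends of the interval:
\begin{itemize}
\item At $N_2=0$, $F(0)=-(J_0/I_0+\CSNS\lambda_0/\CtotNS)\lambda_0/\CtotNS\leq 0$. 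This is strictly negative as soon as $\ks\not\equiv 0$ or $\CSNS>0$, since then $J_0>0$ or the second term is positive.
\item At the right end, if $\CtotS>0$, then $N_1=0$ and $F(\lambda_0/\CtotS)=(\kd+\CDSS\lambda_0/\CtotS)\lambda_0/\CtotS\geq 0$. This is strictly positive if $\kd>0$ or $\CDSS>0$. If $\CtotS=0$, $N_1$ is constant and $F\to+\infty$ when $\kd>0$ or $\CDSS>0$.
\end{itemize}
In the generic case $F(0)<0<F(\text{end})$, the intermediate value theorem gives a root. The number of roots in the interval counted with multiplicity is then odd, and a quadratic has at most two, so the root is unique. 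It lies in the interior, which gives $N_1^*>0$ and $N_2^*>0$ simultaneously. Positivity of $\no^*$ follows from the explicit formula.

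The main obstacle is the degenerate cases in the statement, which I would treat separately by factoring $F$.
\begin{itemize}
\item If $\ks\equiv0$ and $\CSNS=0$, then $F(0)=0$. Here I would write $F(N_2)=N_2\,G(N_2)$ with $G$ affine, and check from the signs of $G$ that the only admissible root is $N_2=0$ (or a single other root, according to the sign of $G(0)$). This shows why $N_2^*>0$ needs $\ks\not\equiv0$ or $\CSNS>0$.
\item If $\kd=\CDSS=0$, the endpoint value vanishes. The root may then sit at $N_1=0$, which explains the hypothesis needed for $N_1^*>0$.
\end{itemize}
In each degenerate case I would verify uniqueness of the root by direct inspection of the linear factor. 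Finally, I would check that the trivial branch $\no^*(0)=0$, which gives $N_1=0$ and $(\kd+\CDSS N_2)N_2=0$, produces only the trivial state under the stated hypotheses, so that the non-trivial steady state is indeed unique.
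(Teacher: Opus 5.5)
Your proof is correct. Its reduction is the paper's: the birth condition forces $\CtotNS N_1^*+\CtotS N_2^*=\lambda_0$, and $\int \ks\no^*=N_1^*(1/I-\lambda_0)$ (your $J_0/I_0$, since $J_0=1-\lambda_0 I_0$). Your $F$ is exactly $-P$ with $P$ as in \eqref{eq:poly_Y}.

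The difference is in how you count roots on $[0,Y_0]$, $Y_0=\lambda_0/\CtotS$. The paper computes $A,B,C$, proves $B^2-4AC>0$ via the identity \eqref{eq:ABC}, and splits on the sign of $A$ to place $Y_0$ relative to the two roots. You use only $F(0)<0<F(Y_0)$, which are the paper's $C>0$ and $P(Y_0)<0$. You then observe that a nonzero polynomial of degree at most two with opposite strict signs at the endpoints has an odd number of roots inside, hence exactly one.

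Your route is shorter, makes the discriminant computation unnecessary, and treats $\CtotS=0$ uniformly. The paper's claim $A<0$ there in fact needs $\CDSS>0$; when $\CDSS=0$, $P$ is affine with $B<0$. What the paper's route buys is the explicit formula $N_2^*=(-B-\sqrt{B^2-4AC})/(2A)$ and the position of the other root relative to $Y_0$. Both are reused later, in the sensitivity result in $\lambda_0$ and in Proposition \ref{prop:compet_compare}.

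Your ``degenerate cases'' cannot occur under the hypotheses. Assumption \ref{assump:bio} gives $\int_0^{+\infty}\ks=+\infty$, so $\ks\not\equiv 0$ and $J_0>0$. Since $\kd$ is constant here, $\int_0^{+\infty}\kd=+\infty$ forces $\kd>0$. Hence $F(0)<0<F(Y_0)$ always holds, and the ``furthermore'' clauses are automatic. Your generic argument, together with your check that the branch $\no^*(0)=0$ yields only $(0,0)$ because $\kd>0$, is the whole proof.

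This matters, because your plan to verify uniqueness ``by direct inspection'' when $\ks\equiv 0$ and $\CSNS=0$ would fail. There $F(N_2)=N_2\,G(N_2)$ with $G$ affine, $G(0)=\kd+(\CDSNS-\CSS)\lambda_0/\CtotNS$, and $G(Y_0)=\kd+\CDSS Y_0>0$. For $\CSS$ large, $G(0)<0$, and you obtain two non-trivial steady states: $(\lambda_0/\CtotNS,0)$ and an interior one.
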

\begin{proof}
 Let $(\no^*, N_2^*)$ denote a non-trivial steady state of \eqref{eq:syst_linear_PDEODE}. We denote $N_1^* = \int_0^{+\infty} \no^*(a) \dd a$. By solving for $\no^*$ in \eqref{eq:syst_linear_PDEODE} and plugging into the renewal condition, we obtain the condition
\begin{equation*}
     \int_0^{+\infty}b(a) e^{-\int_0^{a} \ks(u) \dd u - (\CtotNS  N_1^* + \CtotS  N_2^*) a} \dd a  = 1. 
\end{equation*}
 Hence by Assumption \ref{ass:for_lambda_0},  $\CtotNS N^*_1 + \CtotS  N^*_2 = \lambda_0$ where $\lambda_0$ is defined by \eqref{eq:lambda}. By integrating the equation on $\no^*$, this entails that
 \begin{align}
   \int_0^{+\infty} \ks(a) \no^*(a) \dd a  = \int_0^{+\infty} b(a) \no^*(a) \dd a - \lambda_0 N_1^* = \no^*(0) - \lambda_0 N_1^* = N_1^*(\frac{1}{I} - \lambda_0), \label{eq:kn1}
 \end{align}
 where $I := \frac{N_1^*}{\no^*(0)} = \int_0^{+\infty} e^{-\int_0^{a} (\ks(u) + \lambda_0) \dd u} \dd a < \frac{1}{\lambda_0}$. The strict inequality comes from Assumption \ref{assump:bio}, which ensures that $\ks$ is not identically $0$.
 As $\no^*(a) =\frac{N_1^*}{I}e^{-\int_0^{a} (\ks(u) + \lambda_0 )\dd u} $, there is a bijection between the solutions $(\no^*,N_2^*)$ and $(N_1^*, N_2^*)$. We therefore study the system verified by $(N_1^*, N_2^*)$. 
Using \eqref{eq:kn1}, we see that $(N_1^*, N_2^*)$ is a solution of the following system in $(X,Y)$:
\begin{equation}
\label{eq:steady_pop}
    \begin{cases}
        & \CtotNS  X + \CtotS  Y = \lambda_0  \\
        &-\kd Y  - \CDSNS XY - \CDSS Y^2 +\Big(\frac{1}{I} - \lambda_0\Big) X + \CSNS X^2 + \CSS XY = 0. 
    \end{cases}
\end{equation}
Notice that $(0,0)$ is always a solution and that $(0,Y)$ with $Y > 0$ is never a solution since $\kd > 0$ by Assumption \ref{assump:bio}. Similarly, $(X,0)$ with $X>0$ is never a solution. 
We now study the existence of non-trivial positive solutions of \eqref{eq:steady_pop}. 
With the first equation, we get $X = \frac{1}{\CtotNS }( \lambda_0 - \CtotS Y)$. Plugging into the second equation, this yields 

\begin{align}
 & P(Y) := Y^2( \CSNS(\frac{\CtotS }{\CtotNS })^2  +(\CDSNS-\CSS)\frac{\CtotS }{\CtotNS } - \CDSS) \nonumber\\
  & \quad+ Y \Big(-\big(\frac{1}{I} - \lambda_0\big) \frac{\CtotS }{\CtotNS }- 2\lambda_0\CSNS \frac{\CtotS }{(\CtotNS )^2} -(\CDSNS-\CSS)\frac{\lambda_0}{\CtotNS } -\kd\Big)+ \frac{\lambda_0}{\CtotNS }\Big(\big(\frac{1}{I} - \lambda_0\big)+\CSNS \frac{\lambda_0}{\CtotNS }\Big) \nonumber\\
  & =: AY^2 +BY+C = 0. \label{eq:poly_Y}
\end{align}
Where $C>0$.
Firstly, if $\CtotS  = 0$,  we have $A < 0$. Thus $A$ and $C$ have opposite signs and there is  a single positive root to $P$. When $\CtotS  = 0$, $X$ is defined by the first equation of \eqref{eq:steady_pop} independently of $Y$ and is therefore strictly positive for any value of $Y$, hence there exists a single non-trivial positive steady state.

In the rest of the proof, we consider $\CtotS  > 0$. We express $B$ in terms of $A$ and $C$ as follows,

\begin{equation}
   \CtotS  B = -\lambda_0 A -\lambda_0\CDSS - \frac{(\CtotS)^2}{\lambda_0}C -  \CtotS \kd. \label{eq:ABC}
\end{equation}
Thus, the discriminant of $P$ is
\begin{align*}
    \Delta &= B^2 - 4AC = (\frac{\lambda_0}{\CtotS } A)^2 + (\frac{\lambda_0}{\CtotS }\CDSS)^2 +(\frac{\CtotS}{\lambda_0}C)^2 +  (\CtotS)^2 \kd^2 \\
    & -2AC + 2\frac{\lambda_0}{\CtotS } A(\frac{\lambda_0}{\CtotS }\CDSS  +  \CtotS \kd) + 2\frac{\CtotS}{\lambda_0}C(\frac{\lambda_0}{\CtotS }\CDSS  +  \CtotS \kd) + 2\kd\CtotS \CDSS \\
    & = (-\frac{\lambda_0}{\CtotS } A -\frac{\lambda_0}{\CtotS }\CDSS + \frac{\CtotS}{\lambda_0}C - \CtotS \kd)^2 + 4\frac{\CtotS}{\lambda_0}C (\frac{\lambda_0}{\CtotS }\CDSS  + \CtotS \kd) > 0. 
\end{align*}
Therefore $P$ always has $2$ -potentially equal- real roots. 
There remains to verify wether there exists (a) positive solution(s) to \eqref{eq:poly_Y} and if the corresponding value of $X$ obtained with the first equation of \eqref{eq:steady_pop} is strictly positive. 

Since $X = \frac{1}{\CtotNS} (\lambda_0 - \CtotS Y)$, to determine wether the value of $X$ corresponding to a value of $Y$ is positive or negative, we have to determine wether $Y$ is greater or smaller than $\frac{\lambda_0}{\CtotS } =: Y_0$. 
$(0,Y_0)$ is solution of the first equation of \eqref{eq:steady_pop}. Plugging into \eqref{eq:poly_Y}, we have $P(Y_0)= -\kd Y_0 - \CDSS Y_0^2 <0$.  
\begin{figure}[htbp]
\centering
\begin{minipage}{0.48\textwidth}
\centering
\begin{tikzpicture}[scale=1.3]

\draw[->] (-0.5,0) -- (4,0) node[right] {$y$};
\draw[->] (0,-1) -- (0,3) node[above] {};

\draw[blue,thick,domain=0.3:3.5,smooth,variable=\x]
  plot ({\x},{1.5*(\x-1.2)*(\x-2.6)});

\node[blue,right] at (3.2,2) {\(P(y)\)};

\draw[blue,fill=blue] (1.2,0) circle (0.03);
\draw[blue,fill=blue] (2.6,0) circle (0.03);
\draw[red,fill=red] (2,0) circle (0.03);

\node[blue,below] at (1.2,0) {\(Y_1^* = N_2^*\)};
\node[blue,below] at (2.6,0) {\(Y_2^*\)};
\node[red,above] at (2,0) {\(Y_0\)};

\draw[red,thick,domain=-1:3.5,smooth,variable=\x]
  plot ({\x},{(3 - 1.5*\x)/2});

\node[red,right] at (3.4,-1) {\(\displaystyle \frac{\lambda_0 - \CtotS y}{2 \CtotNS}\)};
\end{tikzpicture}
\caption{Case $A > 0$. }
\label{fig:Case_1}
\end{minipage}
\hfill
\begin{minipage}{0.48\textwidth}
\centering
\begin{tikzpicture}[scale=1.3]

\draw[->] (-0.5,0) -- (4,0) node[right] {$y$};
\draw[->] (0,-1) -- (0,3) node[above] {};

\draw[blue,thick,domain=-0.5:1.9,smooth,variable=\x]
  plot ({\x},{-0.8*(\x)*(2.6*\x-2.7)+2});

\node[blue,right] at (1,2) {\(P(y)\)};

\draw[blue,fill=blue] (1.62,0) circle (0.03);
\draw[red,fill=red] (2,0) circle (0.03);
\node[red,above] at (2,0) {\(Y_0\)};

\node[blue,below] at (1.46,0) {\(Y^* = N_2^*\)};

\draw[red,thick,domain=-1:3.5,smooth,variable=\x]
  plot ({\x},{(3 - 1.5*\x)/2});

\node[red,right] at  (3.4,-1) {\(\displaystyle \frac{\lambda_0 - \CtotS y}{2 \CtotNS}\)};

\end{tikzpicture}
\caption{Case $A \leq  0$}
\label{fig:Case_2}

\end{minipage}
\end{figure}

\textbf{Case 1: \eqref{eq:poly_Y} has two positive solutions $Y^*_1 < Y^*_2$}.
A schematic representation of this case if shown on Figure \ref{fig:Case_1}.
This case arises when $A > 0$. With equation \eqref{eq:ABC}, we notice that $A>0 \implies B <0$. Since $C > 0$, we have $(P(Y_0) < 0 \text{ and } Y_0 > 0) \implies Y_0 \in [Y^*_1,Y^*_2]$ thus the value $X^*_2$ corresponding to $Y^*_2$ is negative, and the only positive solution to \eqref{eq:steady_pop} is $(X_1^*, Y_1^*)$.

\textbf{Case 2: \eqref{eq:poly_Y} has a unique positive solution $Y^*$}
A schematic representation of this case if shown on Figure \ref{fig:Case_2}.
This case arises when $A \leq 0 $. Then, we have $(P(Y_0) < 0 \text{ and } Y_0 > 0) \implies Y_0 \geq Y^*$ thus the value $X^*$ corresponding to $Y^*$ is positive.

In any case, there is one strictly positive solution to \eqref{eq:steady_pop} and a unique non-trivial positive steady state. 
 
\end{proof}

\begin{prop} \label{prop:exp_ss}
    Assume the conditions of Theorem \ref{thm:unique_2S} are verified. The unique positive non-trivial steady state $(n_1^*(a),N_2^*)$ of \eqref{eq:syst_linear} is such that $n_1^*$ verifies
    \begin{align*}
        &n_1^*(a) = n_1^*(0) e^{-\int_0^{a} (\ks (u)+ \lambda_0) \dd u},
    \end{align*}
    where $\lambda_0$ is defined in \eqref{eq:lambda}. 
\end{prop}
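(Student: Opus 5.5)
The plan is to write down the stationary version of the first equation of \eqref{eq:syst_linear} and integrate it along characteristics. Let $(n_1^*,N_2^*)$ be the unique positive non-trivial steady state given by Theorem \ref{thm:unique_2S}, and set $N_1^*=\int_0^{+\infty}n_1^*(a)\dd a$. Dropping the time derivative, $n_1^*$ solves the linear ODE in $a$
\begin{equation*}
\frac{\dd n_1^*}{\dd a}(a) = -\big(\ks(a)+\CtotNS N_1^*+\CtotS N_2^*\big)n_1^*(a).
\end{equation*}
Here $N_1^*$ and $N_2^*$ are constants, so the coefficient depends on $a$ only through $\ks$. Since $\ks\in L^\infty(\R_+)$ by Assumption \ref{assump:bounds}, the equation is solved uniquely, in the absolutely continuous (or weak) sense of Definition \ref{def:weak_2ol}, by
\begin{equation*}
n_1^*(a)=n_1^*(0)\exp\Big(-\int_0^a \ks(u)\dd u-(\CtotNS N_1^*+\CtotS N_2^*)a\Big).
\end{equation*}

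The next step identifies the constant $\CtotNS N_1^*+\CtotS N_2^*$ with $\lambda_0$. Substituting the expression above into the renewal condition $n_1^*(0)=\int_0^{+\infty}b(a)n_1^*(a)\dd a$ gives
\begin{equation*}
n_1^*(0)\Big(1-\int_0^{+\infty}b(a)e^{-\int_0^a\ks(u)\dd u-(\CtotNS N_1^*+\CtotS N_2^*)a}\dd a\Big)=0.
\end{equation*}
We need $n_1^*(0)\neq 0$. If $n_1^*(0)=0$, then $n_1^*\equiv 0$, so $N_1^*=0$. The second equation of \eqref{eq:steady_pop} then reduces to $-\kd Y-\CDSS Y^2=0$, which forces $N_2^*=0$ because $\kd>0$. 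This would make the steady state trivial, a contradiction.

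Hence the bracket vanishes. The map $\lambda\mapsto\int_0^{+\infty} b(a)e^{-\int_0^a\ks-\lambda a}\dd a$ is strictly decreasing, so by uniqueness of the solution of \eqref{eq:lambda} we get $\CtotNS N_1^*+\CtotS N_2^*=\lambda_0$. This is exactly the first line of \eqref{eq:steady_pop}, and substituting it back yields the claimed formula. Consistently with the proof of Theorem \ref{thm:unique_2S}, the value $n_1^*(0)$ is then $N_1^*/I$.

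I expect no real obstacle. The only delicate points are justifying that the weak stationary solution coincides with the classical characteristic solution, which is standard since $\ks$ is locally bounded, and excluding $n_1^*(0)=0$ via non-triviality, as done above.
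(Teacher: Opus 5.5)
Your proof is correct and follows the same route as the paper, which simply invokes the relation $\CtotNS N_1^*+\CtotS N_2^*=\lambda_0$ obtained in the proof of Theorem \ref{thm:unique_2S} by the same substitution of the characteristic solution into the renewal condition. Your explicit exclusion of $n_1^*(0)=0$ fills a step the paper leaves implicit, though it is cleaner to read $-(\kd+\CDSS N_2^*)N_2^*=0$ directly from the stationary $N_2$ equation in \eqref{eq:syst_linear_PDEODE}, since the second line of \eqref{eq:steady_pop} was derived using the relation you are proving (harmless here because $n_1^*\equiv 0$).
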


\begin{proof}
    The proof follows directly by noticing that the population sizes at equilibrium $N_1^* = \int_0^{+\infty} n_1^*(a) \dd a $ and $N_2^*$ verify $ \CtotNS N_1^* + \CtotS N_2^* = \lambda_0$. 
\end{proof}
We now state and prove a qualitative result on the impact of a change in $\lambda_0$ on the proportion of individuals in phase~2. $\lambda_0$ quantifies the natural growth rate of the population and is increasing in $b$ and decreasing in $\ks$. 
\begin{prop}
    Assume the conditions of Theorem \ref{thm:unique_2S} are verified and $\CSNS(\frac{\CtotS }{\CtotNS })^2  +(\CDSNS-\CSS)\frac{\CtotS }{\CtotNS } - \CDSS \neq 0$. Let $\lambda_0$ be defined by \eqref{eq:lambda}, $A,B,C$ by \eqref{eq:poly_Y} and $(N_1^*, N_2^*)$ be the population sizes of the unique positive non-trivial steady state. We have 
    \begin{align*}
          \frac{\partial}{\partial \lambda_0}\! \big( \! \frac{N_2^*}{N_1^*\! +\! N_2^*}\big) \!=\! \frac{\lambda_0}{\CtotNS(N_1^*\! +\! N_2^*)^2}  \Bigg(\frac{1}{2A} \Big(\frac{\kd}{\lambda_0}\! +\! R  \frac{\CtotS }{\CtotNS }\Big) \Big(\!-\!1 \!-\!  \frac{B}{ \sqrt{B^2\!-\!4AC}} \Big)  \! -\!  \frac{\lambda_0 R }{ \CtotNS \sqrt{B^2\!-\!4AC}} \Bigg).
    \end{align*}
    where 
    \begin{equation*}
      I =  \int_0^{+\infty} e^{-\int_0^{a} \ks(y) + \lambda_0 \dd u} \dd a \text{ and } R =  \frac{\int_0^{+\infty} a \ks(a) e^{-\int_0^{a} \ks(y) + \lambda_0 \dd u} \dd a}{\lambda_0 I^2}. 
    \end{equation*}
In particular, $\frac{1}{\lambda_0 I} \geq R$ and 
\begin{align*}
     &\frac{\partial}{\partial \lambda_0} \big(\frac{N_2^*}{N_1^* + N_2^*}\big)  \sim_{\CSNS \to + \infty} \frac{\lambda_0^2}{\kd (\CtotNS)^2 (N_1^* + N_2^*)^2}   (\frac{1}{\lambda_0 I} - R) > 0.
\end{align*}
\end{prop}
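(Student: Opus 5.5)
The plan is to reduce everything to the explicit root of the quadratic \eqref{eq:poly_Y} and then differentiate in $\lambda_0$, keeping all other parameters fixed.

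\textbf{Reduction to $N_2^*$.} By the proof of Theorem \ref{thm:unique_2S}, $N_1^* = (\lambda_0 - \CtotS N_2^*)/\CtotNS$. Hence
\begin{equation*}
N_1^*+N_2^* = \frac{\lambda_0}{\CtotNS} + N_2^*\Big(1-\frac{\CtotS}{\CtotNS}\Big).
\end{equation*}
A direct quotient-rule computation then gives
\begin{equation*}
\frac{\partial}{\partial \lambda_0}\Big(\frac{N_2^*}{N_1^*+N_2^*}\Big) = \frac{\lambda_0}{\CtotNS (N_1^*+N_2^*)^2}\Big(\frac{\partial N_2^*}{\partial \lambda_0} - \frac{N_2^*}{\lambda_0}\Big).
\end{equation*}
This already produces the prefactor in the statement, so it remains to compute $\partial_{\lambda_0} N_2^* - N_2^*/\lambda_0$.

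\textbf{Explicit root.} I will check that in both cases of the proof of Theorem \ref{thm:unique_2S} the relevant root is
\begin{equation*}
N_2^* = \frac{-B-\sqrt{B^2-4AC}}{2A},
\end{equation*}
using $A\neq 0$ and the sign arguments there:
\begin{itemize}
\item if $A>0$, this is the smaller root $Y_1^*$;
\item if $A<0$, then $AC<0$, so the roots have opposite signs and this formula gives the positive one.
\end{itemize}
Since $A$ does not depend on $\lambda_0$, differentiating gives
\begin{equation*}
\frac{\partial N_2^*}{\partial\lambda_0} = \frac{1}{2A}\Big(-B' - \frac{BB' - 2AC'}{\sqrt{B^2-4AC}}\Big).
\end{equation*}
Here $B' = \partial_{\lambda_0}B$ and $C' = \partial_{\lambda_0}C$, and both involve $\partial_{\lambda_0}(1/I - \lambda_0)$.

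\textbf{Derivative of $1/I$.} Write $J=\int_0^{+\infty} a e^{-\int_0^a(\ks+\lambda_0)}\dd a$, so that $\partial_{\lambda_0}(1/I)=J/I^2$. Integrating by parts, $\int_0^{+\infty} a(\ks(a)+\lambda_0)e^{-\int_0^a(\ks+\lambda_0)}\dd a = I$, which gives $\int_0^{+\infty} a\ks e^{-\cdots} = I-\lambda_0 J$. Therefore
\begin{equation*}
\partial_{\lambda_0}\Big(\frac1I-\lambda_0\Big) = \frac{1}{\lambda_0 I} - R - 1.
\end{equation*}
The same identity immediately yields $R \le 1/(\lambda_0 I)$, because $J\ge 0$.

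\textbf{Assembling the formula.} I will substitute $B'$ and $C'$, together with $-N_2^*/\lambda_0 = (B+\sqrt{\Delta})/(2A\lambda_0)$. Many terms should cancel, because $B$ is affine in $\lambda_0$ apart from the $(1/I-\lambda_0)$ term, and $C$ is $\lambda_0/\CtotNS$ times a similar expression. The aim is to regroup the result into the form
\begin{equation*}
\frac{1}{2A}\Big(\frac{\kd}{\lambda_0}+R\frac{\CtotS}{\CtotNS}\Big)\Big(-1-\frac{B}{\sqrt\Delta}\Big) - \frac{\lambda_0 R}{\CtotNS\sqrt\Delta}.
\end{equation*}
I expect this bookkeeping to be the main obstacle. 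To keep it manageable, I will use the relation \eqref{eq:ABC} between $A$, $B$ and $C$ to eliminate combinations. I will also use Euler-type homogeneity: $B-\lambda_0 B'$ and $C-\lambda_0 C'$ only retain the contributions from differentiating $I$, which are exactly the $R$ terms.

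\textbf{Asymptotics as $\CSNS\to+\infty$.} Note that $\CtotNS=\CSNS+\CDNS$ also grows. I will find the leading orders of $A$, $B$, $C$ and $\sqrt{\Delta}$:
\begin{itemize}
\item $A\to -\CDSS + \CtotS(\CDSNS-\CSS)/\CtotNS+\cdots$ tends to the constant $-\CDSS$;
\item $C\to \lambda_0^2/\CtotNS+\cdots$ plus $O(1/\CtotNS)$, so $C\to 0$;
\item $B\to -\kd - \cdots$, up to terms of order $1/\CtotNS$.
\end{itemize}
Hence $\sqrt\Delta\approx|B|$ and $-1-B/\sqrt\Delta$ must be expanded to next order, giving roughly $2AC/B^2$. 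Inserting this into the bracket, the leading term should be $\lambda_0(\frac{1}{\lambda_0 I}-R)/(\kd\CtotNS)$. This gives the stated equivalent, and its positivity follows from $1/(\lambda_0 I)\ge R$, with strict inequality because $J>0$.
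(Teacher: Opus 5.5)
Your proposal is correct and follows the paper's proof essentially step for step: the quotient-rule reduction to $\lambda_0\partial_{\lambda_0}N_2^*-N_2^*$, the root $N_2^*=(-B-\sqrt{B^2-4AC})/(2A)$ in both sign cases, the integration by parts giving $\partial_{\lambda_0}(1/I)=\frac{1}{\lambda_0 I}-R$, and the expansion $\sqrt{\Delta}\approx|B|$. Your justification of $R\le \frac{1}{\lambda_0 I}$ via $J\ge 0$ is more explicit than the paper's. Two bookkeeping corrections for when you carry it out: the homogeneity identities are $B-\lambda_0\partial_{\lambda_0}B=-\kd-\lambda_0 R\,\CtotS/\CtotNS$ and $2C-\lambda_0\partial_{\lambda_0}C=\lambda_0^2R/\CtotNS$ (not $C-\lambda_0\partial_{\lambda_0}C$), and in the asymptotics the leading order of $C$ is $\lambda_0/(I\CtotNS)$. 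The piece $\frac{\lambda_0}{\CtotNS}(\frac{1}{I}-\lambda_0)$ is of the same order as $\lambda_0^2/\CtotNS$, and it is exactly what yields $\frac{1}{\lambda_0 I}$ rather than $1$ in the final equivalent.
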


\begin{proof}
    We have 
    \begin{align}
        \frac{\partial}{\partial \lambda_0} \big(\frac{N_2^*}{N_1^* + N_2^*}\big) &= \frac{\frac{\partial N_2^*}{\partial \lambda_0}(N_1^* + N_2^*) - N_2^*(\frac{\partial N_1^*}{\partial \lambda_0} + \frac{\partial N_2^*}{\partial \lambda_0})  }{(N_1^* + N_2^*)^2} \nonumber\\
        &= \frac{\frac{\partial N_2^*}{\partial \lambda_0}(\frac{\lambda_0}{\CtotNS} - \frac{\CtotS N_2^*}{\CtotNS} + N_2^*) - N_2^*(\frac{1}{\CtotNS} - \frac{\CtotS}{\CtotNS} \frac{\partial N_2^*}{\partial \lambda_0} + \frac{\partial N_2^*}{\partial \lambda_0})  }{(N_1^* + N_2^*)^2} \nonumber\\
        &=  \frac{\lambda_0 \frac{\partial N_2^*}{\partial \lambda_0} -N_2^* }{\CtotNS (N_1^* + N_2^*)^2}. \label{eq:diff_ratio}
    \end{align}
We now study the sign of $\lambda_0 \frac{\partial N_2^*}{\partial \lambda_0} -N_2^*$. denote $P(Y) = AY^2 + BY + C$ the polynomial such that  $P(N_2^*) = 0$ as introduced in the proof of Theorem \ref{thm:unique_2S} (see equation \eqref{eq:poly_Y}). It can be seen from Figures \ref{fig:Case_1} and \ref{fig:Case_2} that for any value of $A$, we have 
\begin{align*}
    N_2^* = \frac{1}{2A} (-B - \sqrt{B^2-4AC}). 
\end{align*}
Furthermore, for $I = \int_0^{+\infty} e^{-\int_0^{a} \ks(y) + \lambda_0 \dd u} \dd a$, we have by integration by parts
\begin{align*}
    \frac{\partial }{\partial \lambda _0} \big(\frac{1}{I}\big) &= \frac{-1}{I^2} \frac{\partial I}{\partial \lambda_0} =  \frac{1}{I^2} \int_0^{+\infty} a e^{-\int_0^{a} \ks(y) + \lambda_0 \dd u} \dd a\\
    & =  \frac{1}{I^2} \Bigg(\big[-\frac{1}{\lambda_0}  a e^{-\int_0^{a} \ks(y) + \lambda_0 \dd u} \big]_0^{+\infty} + \frac{1}{\lambda_0} \int_0^{+\infty}  e^{-\int_0^{a} \ks(y) + \lambda_0 \dd u} \dd a \\
    & \qquad \qquad \qquad \qquad - \frac{1}{\lambda_0}\int_0^{+\infty} a \ks(a) e^{-\int_0^{a} \ks(y) + \lambda_0 \dd u} \dd a\Bigg)\\
    &= \frac{1}{I \lambda_0 } -\frac{\int_0^{+\infty} a \ks(a) e^{-\int_0^{a} \ks(y) + \lambda_0 \dd u} \dd a}{\lambda_0 I^2}\\
    & =: \frac{1}{I \lambda_0 }  - R. 
 \end{align*}
Using the previous identity, we have 
\begin{align*}
    \frac{\partial B}{\partial \lambda_0} &= -\big(\frac{1}{I \lambda_0 }  - R - 1\big) \frac{\CtotS }{\CtotNS }- 2\CSNS \frac{\CtotS }{(\CtotNS )^2} -(\CDSNS-\CSS)\frac{1}{\CtotNS } = \frac{(B+\kd)}{\lambda_0} + R  \frac{\CtotS }{\CtotNS }. 
\end{align*}
Similarly, 
\begin{align*}
    \frac{\partial C}{\partial \lambda_0} & = \frac{1}{\CtotNS }\Big(\big(\frac{1}{I} - \lambda_0\big)+\CSNS \frac{\lambda_0}{\CtotNS }\Big) + \frac{\lambda_0}{\CtotNS }\Big(\big(\frac{1}{\lambda_0 I} - R - 1\big)+\frac{\CSNS}{\CtotNS }\Big) = \frac{2 C}{\lambda_0} - \frac{\lambda_0 R}{\CtotNS}. 
\end{align*}
Hence, since $A$ does not depend on $\lambda_0$, we have 
\begin{align*}
    &\frac{\partial N_2^*}{\partial \lambda_0} = \frac{1}{2A}\Bigg(- \frac{\partial B}{\partial \lambda_0} - \frac{1}{2 \sqrt{B^2-4AC}} \Big( 2 B \frac{\partial B}{\partial \lambda _0} - 4A \frac{\partial C}{\partial \lambda_0}\Big) \Bigg)\\
    & =  \frac{1}{2A} \Bigg(-\frac{B}{\lambda_0} - \frac{\kd}{\lambda_0} - R  \frac{\CtotS }{\CtotNS } -  \frac{1}{ \sqrt{B^2-4AC}} \Big(  \frac{B^2}{\lambda_0} + B\frac{\kd}{\lambda_0} + B R  \frac{\CtotS }{\CtotNS } - \frac{4AC}{\lambda_0} + 2A \frac{\lambda_0 R}{\CtotNS}\Big) \Bigg) \\
    & = \frac{1}{2A \lambda_0 }\Bigg(-B - \frac{B^2-4AC}{\sqrt{B^2-4AC}} \Bigg) + \frac{1}{2A}\Bigg( -\frac{\kd}{\lambda_0} - R  \frac{\CtotS }{\CtotNS } \\
    & \qquad \qquad \qquad \qquad \qquad -  \frac{1}{ \sqrt{B^2-4AC}}\Big( B\frac{\kd}{\lambda_0} + BR \frac{\CtotS }{\CtotNS } + 2A \frac{\lambda_0 R}{\CtotNS}\Big) \Bigg)\\
    & = \frac{N_2^*}{\lambda_0} + \frac{1}{2A}\Bigg( \Big(\frac{\kd}{\lambda_0} + R  \frac{\CtotS }{\CtotNS }\Big) \Big(-1 -  \frac{B}{ \sqrt{B^2-4AC}} \Big)   -  \frac{1}{\sqrt{B^2-4AC}}\Big(2A \frac{\lambda_0 R}{\CtotNS}\Big) \Bigg). 
\end{align*}
Thus, by equation \eqref{eq:diff_ratio}, to determine the sign of $ \frac{\partial}{\partial \lambda_0} \big(\frac{N_2^*}{N_1^* + N_2^*}\big) $, we need to determine the sign of 
\begin{equation*}
     \frac{1}{2A} \Big(\frac{\kd}{\lambda_0} + R  \frac{\CtotS }{\CtotNS }\Big) \Big(-1 -  \frac{B}{ \sqrt{B^2-4AC}} \Big)   -  \frac{\lambda_0 R }{ \CtotNS \sqrt{B^2-4AC}}. 
\end{equation*}
Since $A>0 \implies B \leq 0$, the first term is always positive, and the second term is always negative. Although we cannot determine the sign of this expression without further assumptions, we can find its sign when $\CSNS \to +\infty$ 
\begin{align*}
     &\frac{\partial N_2^*}{\partial \lambda_0} -\frac{N_2^*}{\lambda_0} \sim_{\CSNS \to + \infty}\frac{\lambda_0}{\kd \CtotNS} (\frac{1}{\lambda_0 I} - R) > 0.
\end{align*}
\end{proof}

\begin{remark}
    The previous result allows to see that there is no generality in the way an increase of the natural growth rate of the population, quantified by $\lambda_0$, affects the proportion of individuals in phase~2. For different parameter values, the impact can be either positive or negative. However, when the competition accelerating transition is high, an increase in $\lambda_0$ results in an increase of the proportion of individuals in phase~2. 
\end{remark}

We now study the impact of competition on the transition from phases 1 to 2 by comparing the population sizes at equilibrium of the system with and without competition affecting the transition rate.

  We introduce the following system, which is obtained from \eqref{eq:syst_linear_PDEODE} by setting $\eta_1 = \eta_2 = 0$. 
    \begin{equation}
    \begin{cases}
        &\frac{\partial \no(t,a) }{\partial t} + \frac{\partial \no(t,a) }{\partial a} = -(\ks(a)+ \CtotNS N_1(t) +\CtotS N_2(t))\no(t,a) \\
        & \no(0,t) = \int_0^{+\infty}b(a) \no(t,a) \dd a\\
        &\frac{\partial \nt(t,a) }{\partial t} + \frac{\partial \nt(t,a) }{\partial a} = -(\kd+ \CDSNS N_1(t) +\CDSS N_2 (t))\nt(t,a) \\
        & \nt(0,t) = \int_0^{+\infty}\ks(a)\no(t,a) \dd a\\
        & \no(a,0) = n_{1,0}(a) \in L^1(\R_+,\R_+) \\
        & \nt(a,0) = n_{2,0}(a) \in L^1(\R_+,\R_+).\label{eq:syst_linear_nosmurf}
    \end{cases}
\end{equation}
By Theorem \ref{thm:unique_2S}, \eqref{eq:syst_linear_nosmurf} has a unique non-trivial positive steady state. 

\begin{prop}\label{prop:compet_compare}
Suppose Assumptions \ref{assump:bio}, \ref{assump:bounds}\eqref{bound:b}, \ref{ass:for_lambda_0} and \ref{assump:positive_compet} are verified and $\CSNS > 0$.
Let $(N_1^*, N_2^*)$ and $(N_1^{**}, N_2^{**})$ be respectively the population sizes of the unique non-trivial positive steady states of \eqref{eq:syst_linear_PDEODE} and \eqref{eq:syst_linear_nosmurf}. 
   We have
\begin{equation*}
    N_1^{**} \geq N_1^* \text{ and } N_2^{**} \leq N_2^*.  
 \end{equation*}
 Furthermore,
 \begin{equation*}
   \CtotS \leq \CtotNS \iff  N_1^{**} + N_2^{**} \leq N_1^* + N_2^*.  
 \end{equation*}
\end{prop}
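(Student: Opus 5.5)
The plan is to compare the two steady states through the reduced algebraic system \eqref{eq:steady_pop} from the proof of Theorem \ref{thm:unique_2S}. The key observation is that in \eqref{eq:syst_linear_nosmurf} the phase-1 equation still contains the full competition coefficients $\CtotNS, \CtotS$. Only the inflow into phase~2 loses the $\CSNS N_1 + \CSS N_2$ term. Consequently both steady states satisfy the same first equation,
\begin{equation*}
\CtotNS X + \CtotS Y = \lambda_0 ,
\end{equation*}
with the same $\lambda_0$ and the same $I$. Both therefore lie on the same segment joining $(\lambda_0/\CtotNS, 0)$ to $(0, Y_0)$, where $Y_0 = \lambda_0/\CtotS$ (or on the half-line $X = \lambda_0/\CtotNS$ if $\CtotS = 0$). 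Theorem \ref{thm:unique_2S} applies to both systems, the second being the case $\CSNS = \CSS = 0$. Since $\kd > 0$ and $\ks \not\equiv 0$ by Assumption \ref{assump:bio}, it gives $N_1^*, N_1^{**}, N_2^*, N_2^{**} > 0$.

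\textbf{Step 1.} Parametrise the segment by $Y$, setting $X(Y) = (\lambda_0 - \CtotS Y)/\CtotNS$. Let $P_\eta(Y)$ and $P_0(Y)$ be the left-hand sides of the second equation of \eqref{eq:steady_pop} evaluated at $(X(Y), Y)$, with and without the $\CSNS, \CSS$ terms. These are exactly the polynomials \eqref{eq:poly_Y} for the two parameter sets, and
\begin{equation*}
P_\eta(Y) - P_0(Y) = X(Y)\big(\CSNS X(Y) + \CSS Y\big).
\end{equation*}
This difference is $\geq 0$ whenever $X(Y) \geq 0$, and $> 0$ when $X(Y) > 0$ because $\CSNS > 0$.

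\textbf{Step 2 (sign pattern).} This is the step I expect to need the most care. I will extract from the case analysis in the proof of Theorem \ref{thm:unique_2S} that, for either parameter set, the polynomial is positive on $[0, N_2)$ and negative on $(N_2, Y_0]$, where $N_2$ is the relevant steady state value. The ingredients are $P(0) = C > 0$ and $P(Y_0) < 0$. In the case $A > 0$ the admissible root is the smaller root $Y_1^* < Y_0 < Y_2^*$. In the case $A \leq 0$ there is a single positive root, and it lies below $Y_0$. In both cases exactly one root lies in $(0, Y_0)$, and the sign is $+$ before it and $-$ after it. If $\CtotS = 0$ the domain is all of $\R_+$, and the same pattern holds because then $A < 0$ and $C > 0$.

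\textbf{Step 3 (ordering).} Since $X(N_2^{**}) = N_1^{**} > 0$, we get $P_\eta(N_2^{**}) > P_0(N_2^{**}) = 0$. By Step 2 this forces $N_2^{**} < N_2^*$, which in particular gives $N_2^{**} \leq N_2^*$. Because $X(Y)$ is nonincreasing in $Y$, it follows that $N_1^{**} = X(N_2^{**}) \geq X(N_2^*) = N_1^*$.

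\textbf{Step 4 (total population).} On the line we have
\begin{equation*}
X + Y = \frac{\lambda_0}{\CtotNS} + Y\Big(1 - \frac{\CtotS}{\CtotNS}\Big).
\end{equation*}
This is nondecreasing in $Y$ when $\CtotS \leq \CtotNS$ and strictly decreasing when $\CtotS > \CtotNS$. Using the strict inequality $N_2^{**} < N_2^*$ from Step 3:
\begin{itemize}
\item if $\CtotS \leq \CtotNS$, then $N_1^{**} + N_2^{**} \leq N_1^* + N_2^*$;
\item if $\CtotS > \CtotNS$, then $N_1^{**} + N_2^{**} > N_1^* + N_2^*$.
\end{itemize}
Together these give the claimed equivalence.
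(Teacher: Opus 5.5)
Your proof is correct and follows essentially the same route as the paper: compare the two quadratics along the common line $\CtotNS X + \CtotS Y = \lambda_0$, show that the full-system polynomial is positive at $N_2^{**}$, and conclude using the sign pattern from the two cases in the proof of Theorem \ref{thm:unique_2S}. Your difference $X(Y)(\CSNS X(Y)+\CSS Y)$ is exactly the paper's factorised expression, and your strict inequality $N_2^{**}<N_2^*$ (obtained from $\CSNS>0$ and $N_1^{**}>0$) is a useful refinement. The paper only records $N_2^{**}\le N_2^*$, which on its own does not give the reverse implication of the equivalence when $\CtotS>\CtotNS$.
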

\begin{proof}
We begin by recalling that the uniqueness of a non-trivial positive steady state for both \eqref{eq:syst_linear_PDEODE} and \eqref{eq:syst_linear_nosmurf} is given by Theorem \ref{thm:unique_2S}. 
   We denote by $P(Y)$ the polynomial such that $P(N_2^*) = 0$ as in the proof of Theorem \ref{thm:unique_2S}, equation \eqref{eq:poly_Y}. We denote by $P_2$ the polynomial such that $P_2(N_2^{**}) = 0$. The expression of $P_2$ can be deduced from the expression of $P$ by setting $\CSNS = \CSS = 0$ and $\CDNS = \CtotNS, \CDS = \CtotS$. 
   We show that $N_2^{**} \leq N_2^*$ by determining the sign of $P(N_2^{**})$. 
    We have 
    \begin{align*}
        P(Y) - P_2(Y) &= \frac{\CtotS}{\CtotNS}(\CSNS  \frac{\CtotS}{\CtotNS} - \CSS) Y^2 + \frac{\lambda_0}{\CtotNS}(\CSS - 2\CSNS\frac{\CtotS}{\CtotNS}) Y + \CSNS \Big(\frac{\lambda_0}{\CtotNS}\Big)^2 \\
        & = \frac{\CtotS}{(\CtotNS)^2}\Big(Y - \frac{\lambda_0}{\CtotS}\Big) \Big( (\CSNS \CtotS - \CSS \CtotNS) Y- \lambda_0 \CSNS \Big). 
        \end{align*}
    Since $\lambda_0 = \CtotS N_2^{**} + \CtotNS N_1^{**}$, we have $N_2^{**}-  \frac{\lambda_0}{\CtotS} \leq 0$. Furthermore, 
    \begin{align*}
        & (\CSNS \CtotS - \CSS \CtotNS) N_2^{**} -  \lambda_0 \CSNS  = \CSNS( \CtotS N_2^{**} - \lambda_0 ) -  \CSS \CtotNS N_2^{**} \leq 0.
    \end{align*}
    Thus, we have $P(N_2^{**}) \geq 0$. 
    
   Once again referring to the notations introduced in the proof of Theorem \ref{thm:unique_2S} where $P(Y) = AY^2+BY+C$ is the polynomial such that $P(N_2^*) = 0$, we can distinguish two cases. 
      
   \textbf{Case 1 :  $A > 0$}, $P$ has two positive roots $N_2^*$ and $Y_2$ such that $N_2^* < Y_2$ and $\lambda_0 - \CtotS Y_2 < 0$. Since by definition $\CtotNS N_1^{**} =  \lambda_0 - \CtotS N_2^** \geq  0$, we have necessarily $N_2^{**} \leq Y_2$ and since $P(N_2^{**}) \geq 0$, $N_2^{**} \leq N_2^*$ ( in Figure \ref{fig:Case_1}, $N_2^{**}$ is such that $P(N_2^{**}) \geq 0$ and is under the red line).  
   
   \textbf{Case 2 :  $A \leq 0$} , $0 \leq N_2^{**} \leq N_2^*$ since $N_2^*$ is the only positive root of $P(Y) = 0$ (see Figure \ref{fig:Case_2} for a schematic representation) .

    In any case, we have $N_2^{**} \leq N_2^*$. Since  $\lambda_0 = \CtotS N_2^{**} + \CtotNS N_1^{**} =\CtotS N_2^{*} + \CtotNS N_1^{*} $, this entails that $N_1^{**} \geq N_1^*$. 
    
    Finally, we have 
    \begin{align*}
       & N_1^*  + N_2^* = \frac{\lambda_0}{\CtotNS} + (1-\frac{\CtotS}{\CtotNS})N_2^* \\
        &N_1^{**}  + N_2^{**} = \frac{\lambda_0}{\CtotNS} + (1-\frac{\CtotS}{\CtotNS})N_2^{**}.
    \end{align*}
Which entails $\CtotS \leq \CtotNS \iff  N_1^{**} + N_2^{**} \leq N_1^* + N_2^*$. 
\end{proof}

\begin{remark}
  As mentioned in Remark \ref{rem:compet_less}, the assumption $\frac{\CtotS}{\CtotNS} \leq 1$ is biologically realistic. Under this condition, not having competition accelerating the transition between phases 1 and 2 entails a smaller population size at equilibrium. As Proposition \ref{prop:compet_compare} assumes that both compared systems have the same total competition affecting phase 1, this means that in the presence of competition accelerating the transition from phases 1 to 2 (model \eqref{eq:syst_linear_PDEODE}), there is less accidental death in phase~1, which is why it results in a larger total population size at equilibrium. 
\end{remark}

\subsection{Local stability}
In this section, we study the stability of the system around its positive equilibria. We present the main result, for which the proof is detailed in the Appendix \ref{sec:app_thm}.

\begin{thm}\label{thm:stab}
    Suppose Assumptions \ref{assump:bio}, \ref{assump:bounds} and \ref{assump:positive_compet} are verified, $\CtotNS >0$ and $\kd~\cdot~ \CDSS \neq 0$. Then, if  $1 < \int_0^{+\infty} b(a) e^{-\int_0^{+\infty }\ks(u) \dd u } \dd a$, the unique non-trivial strictly positive stationary state $(\no^*, N_2^*)$ of system \eqref{eq:syst_linear} is locally asymptotically stable. If  $1 > \int_0^{+\infty} b(a) e^{-\int_0^{+\infty }\ks(u) \dd u } \dd a$, the stationary state $(0,0)$ is locally asymptotically stable. 
\end{thm}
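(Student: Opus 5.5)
We linearise system \eqref{eq:syst_linear_PDEODE} around a steady state and show that the associated semigroup has negative growth bound. The linearised semigroup is an eventually compact perturbation of a stable transport semigroup, so the principle of linearised stability for age-structured semilinear problems (Webb's framework, as in \cite{webb1985theory,Iannelli_17}) reduces the question to locating the roots of a characteristic equation. We write $\no = \no^* + u$ and $N_2 = N_2^* + v$, and set $U(t)=\int_0^{+\infty} u(t,a)\dd a$. At the positive steady state of Theorem \ref{thm:unique_2S}, Proposition \ref{prop:exp_ss} gives $\CtotNS N_1^*+\CtotS N_2^* = \lambda_0$. The linearised system reads
\begin{align*}
&\partial_t u + \partial_a u = -(\ks(a)+\lambda_0)u - (\CtotNS U + \CtotS v)\no^*(a), \qquad u(t,0)=\int_0^{+\infty} b(a) u(t,a)\dd a,\\
&\dot v = -(\kd+\CDSNS N_1^*+2\CDSS N_2^*)v - \CDSNS N_2^* U + \int_0^{+\infty}\ks u \dd a + \CSNS(N_1^* U + \no... )
\end{align*}
The last term of the second line is to be read as the full derivative of the nonlinearity $(\CSNS N_1+\CSS N_2)N_1$, which gives $(2\CSNS N_1^*+\CSS N_2^*)U+\CSS N_1^* v$.

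Next we look for exponential solutions $u=e^{zt}\bar u(a)$ and $v=e^{zt}\bar v$. Solving the transport equation for $\bar u$ with Duhamel's formula gives
\[
\bar u(a)=\bar u(0)\,\pi_z(a) - (\CtotNS \bar U+\CtotS\bar v)\,\frac{\no^*(0)}{z}\big(\pi_0(a)-\pi_z(a)\big),
\qquad \pi_z(a)=e^{-\int_0^a(\ks+\lambda_0)\dd u - za}.
\]
Inserting this into the renewal condition, the definition of $\bar U$ and the $v$-equation yields a $3\times3$ linear system in $(\bar u(0),\bar U,\bar v)$. Its determinant $\Delta(z)=0$ is the characteristic equation. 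This step uses the normalisation $\int_0^{+\infty} b\,\pi_0=1$ and the identity \eqref{eq:kn1} to express $\int_0^{+\infty}\ks\,\pi_z$. We then show that $\Delta(z)\neq 0$ for $\Re z\ge 0$. The natural approach is to write $\Delta(z)=(z+\beta)\,F(z)+G(z)$, where $\beta=\kd+\CDSNS N_1^*+2\CDSS N_2^*-\CSS N_1^*$ is the diagonal coefficient in $v$. One then bounds $|\hat b_z|\le \hat b_{\Re z}<1$ for $\Re z>0$, using $\hat b_z:=\int_0^{+\infty} b\,\pi_z$, and applies a Rouché-type or direct modulus argument. The case $z=0$ is handled using the sign information on $P$ from the proof of Theorem \ref{thm:unique_2S}. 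There $N_2^*$ is the root at which $P$ crosses from positive to negative, i.e. $P'(N_2^*)<0$, and $\Delta(0)$ should be a positive multiple of $-P'(N_2^*)$. The hypotheses $\CtotNS>0$ and $\kd\CDSS\neq0$ make these coefficients strictly positive.

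In the extinction case the linearisation at $(0,0)$ decouples. The $u$-equation becomes the plain McKendrick equation with characteristic function $\int_0^{+\infty} b(a)e^{-\int_0^a\ks - za}\dd a=1$. Its roots all have negative real part when the value at $z=0$ is less than $1$. The $v$-equation gives the eigenvalue $-\kd<0$, triangularly coupled through $\int_0^{+\infty}\ks u$. Hence the point spectrum lies in $\{\Re z<0\}$ in this case as well.

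The remaining functional-analytic step is showing that the essential growth bound is negative. The generator of the linearisation is a compact (finite-rank) perturbation of the transport operator with boundary condition. The essential growth bound is therefore at most $-\lambda_0-\liminf \ks$ or $-\kd$, so stability is governed by the point spectrum alone. Here Assumption \ref{assump:bounds} and the Fréchet differentiability of the nonlinearity in $L^1\times\R$ are needed.

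I expect the main obstacle to be proving that $\Delta(z)$ has no roots with $\Re z\ge0$ and $z\neq0$ at the positive steady state. The coupling terms have no sign in general, and $\CDSNS-\CSS$ may be negative. My first attempt will be to divide by $z+\beta$ and prove $|G(z)/((z+\beta)F(z))|<1$ on the closed right half-plane. If that fails, I will write $1-\hat b_z=z\int_0^{+\infty}\ldots$ and obtain a representation $\Delta(z)=z^2+c_1(z)z+c_0(z)$ whose coefficients have positive real parts, using $\Re\,\hat\pi_z\le\hat\pi_0$.
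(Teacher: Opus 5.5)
\textbf{Verdict: genuine gap.} Your architecture matches the paper's. You linearise, use compactness of the finite-rank part to get $\omega_{ess}<0$ at the positive steady state, reduce eigenvalues to a $3\times3$ determinant in $(\bar u(0),\bar U,\bar v)$, and locate its zeros. But the step that carries the theorem, showing $\Delta(z)\neq 0$ on $\{\Re z\ge 0\}$, is not done; you only list candidate strategies. Neither works as stated:
\begin{itemize}
\item For the Rouch\'e/modulus route, no estimate is offered.
\item Knowing that $z$-dependent complex coefficients $c_1(z),c_0(z)$ have positive real parts does not by itself keep the zeros of $z^2+c_1(z)z+c_0(z)$ in the left half-plane.
\end{itemize}
Your $z=0$ check via $P'(N_2^*)<0$ is correct, but it covers a single point.

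\textbf{Missing idea.} The feedback term $-(\CtotNS U+\CtotS v)\no^*(a)$ is carried by $\no^*=\no^*(0)\pi_0$. This is precisely the eigenfunction at $0$ of the shifted renewal operator. Concretely, set $\bar w=\CtotNS\bar U+\CtotS\bar v$. Using $\int_0^{+\infty}b\,\pi_0=1$, your renewal row reads
\[
(1-\hat b_z)\big(\bar u(0)+\no^*(0)\bar w/z\big)=0 .
\]
For $\Re z\ge0$, $z\neq0$, one has $|\hat b_z|<1$, so $\bar u(0)=-\no^*(0)\bar w/z$. Your Duhamel formula then collapses to $\bar u=-(\bar w/z)\,\no^*$: every candidate unstable eigenfunction is collinear with $\no^*$.

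Since $\int_0^{+\infty}\ks\no^*=\no^*(0)-\lambda_0N_1^*$, the problem reduces to a real $2\times2$ matrix. Equivalently, all terms in $\hat\pi_z=\int_0^{+\infty}\pi_z$ cancel from your determinant, leaving, up to a nonzero constant,
\[
\Delta(z)=\frac{1-\hat b_z}{z}\Big(z^2+(\beta+\CtotNS N_1^*)z+\CtotNS N_1^*\beta+\CtotS\big(\no^*(0)-(\lambda_0+b_3^*)N_1^*\big)\Big),
\]
where $b_3^*=\CDSNS N_2^*-2\CSNS N_1^*-\CSS N_2^*$ and $\beta$ is your diagonal coefficient.

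The first factor has no zeros in $\{\Re z\ge0\}$; at $z=0$ it tends to $\int_0^{+\infty}a\,b\,\pi_0>0$. The steady-state relations give
\[
\beta N_2^*=\int_0^{+\infty}\ks\no^*+\CSNS(N_1^*)^2+\CDSS(N_2^*)^2>0,
\]
and they turn the constant coefficient into
\[
\CtotNS N_1^*\beta+\CtotS\big(\kd N_2^*+\CDSS(N_2^*)^2+\CSNS(N_1^*)^2\big)>0 .
\]
Routh--Hurwitz on this real quadratic then finishes the proof. This is exactly the paper's argument. Your worry about the sign of $\CDSNS-\CSS$ disappears once these identities are used, and no Rouch\'e-type bound is needed.

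A side remark on the extinction case: at $(0,0)$ your bound $-\liminf\ks$ on $\omega_{ess}$ need not be negative, since Assumption~\ref{assump:bio} allows $\ks\to0$. The paper's proposition on $\omega_{ess}$ also gives only $\le 0$ at $(0,0)$, so this weakness is shared with the paper rather than specific to your proposal.
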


In order to prove these results, we proceed in the following way :
\begin{itemize}
    \item Express system \eqref{eq:syst_linear_PDEODE} as a theoretical Cauchy problem. 
    \item Prove that the linearised operator around a steady state generates a $C_0$-continuous semi-group.
    \item Study the spectrum of the linearised operator to conclude on stability and prove Theorem \ref{thm:stab}. 
\end{itemize}

To reformulate our system as a Cauchy problem, we first introduce a series of notations (see \cite{Implication_Perasso_19} for more details).
Let $L$ be a differential operator and $\{T_L(t)\}_{t \geq 0}$ its semi-group. We denote by $\rho(L)$ the resolvent set of $L$ and define the spectrum of $L$, $\sigma(L)$, its point spectrum $\sigma_p(L)$ and the spectral bound $s(L)$ by 
\begin{align*}
    &\sigma(L) = \mathbb{C}\backslash \rho(L), \\
    &\sigma_p(L) = \{\lambda \in \mathbb{C}, \lambda I - L \text{ is not injective} \}, \\
    &s(L) = \sup \{Re(\lambda), \lambda \in \sigma(L) \}.
\end{align*}
We let $ X =  L^1((0, +\infty), \R) \times \R^2 $ endowed with the product norm denoted $||.||$. Define the linear operator $\mathcal{A}$ on $D(\mathcal{A}) =  W^{1,1}((0, +\infty), \R) \times \R\times \{0\} \subset X$ by
\begin{equation*}
\mathcal{A} \begin{pmatrix}
x \\ y \\ 0 
\end{pmatrix}
= \begin{pmatrix}
-x'(a) -\ks(a) x(a)\\
-\kd y  \\
 -x(0)
\end{pmatrix}
\end{equation*}
where $x'$ denotes the differential of $x$.
We also define the operator $\mathcal{F}: L^1((0, +\infty), \R) \times \R\times \{0\} \to X$ corresponding to the nonlinear part of system \eqref{eq:syst_linear_PDEODE} (including the birth term for ease of computation later on), 
\begin{align*}
    &\mathcal{F} \begin{pmatrix}
x \\ y \\ 0 
\end{pmatrix}
=\\
&\begin{pmatrix}
-(\CtotNS  \int_0^{+\infty} x(a) \dd a + \CtotS  y) x(a) \\
-(\CDSNS \int_0^{+\infty} x(a) \dd a + \CDSS y)y + \int_0^{+\infty} \ks(a) x(a) \dd a + (\CSNS \int_0^{+\infty} x(a) \dd a + \CSS y) \int_0^{+\infty} x(a) \dd a\\
 \int_0^{+\infty} b(a) x(a) \dd a \nonumber
\end{pmatrix}.
\end{align*}
Defined as such, if we let $u(t) = (\no(t,.), N_2(t), 0)^{T}$ and $u_0 = (n_{1,0}(.), N_{2,0}, 0)^{T}$, then \eqref{eq:syst_linear_PDEODE} can be expressed as the following Cauchy problem,
\begin{equation}
    \begin{cases}
        &\frac{\dd}{\dd t} u(t) = \mathcal{A}u(t) + \mathcal{F}(u(t)), \, t \geq 0\\
        &u(0) = u_0.
    \end{cases}
    \label{eq:cauchy_abstract}
\end{equation}

We now compute the linearisation of the operator $\mathcal{A} + \mathcal{F}$ around a (generic) steady state.
Let $u^* = (\no^*(.), N_2^*, 0)$ be a steady state of \eqref{eq:cauchy_abstract}. The linearised system around $u^*$ is  $\mathcal{A} + D\mathcal{F}(u^*) $ with 
\begin{equation}
\label{eq:DFU}
\begin{split}
       &D\mathcal{F}(u^*)\begin{pmatrix}
        x(t,a)\\ y(t) \\ 0
    \end{pmatrix}
    =  \begin{pmatrix}
        -(\CtotNS N_1^* + \CtotS N_2^* ) x(t,a) \\
       0 \\
        0
    \end{pmatrix} +\\
   & \begin{pmatrix}
        - (\CtotNS X(t) + \CtotS y(t) )\no^*(a) \\
       (\CSNS N_1^*\! -\! \CDSNS  N_2^*) X(t) \!+\!  \int_0^{+\infty} (\ks(a) \!+\! \CSNS N_1^* + \CSS N_2^* ) x(t,a) \dd a  \! -\!((\CDSNS - \CSS)  N_1^* + 2\CDSS N_2^* ) y(t) \\
        \int_0^{+\infty} b(a) x(t,a) \dd a
    \end{pmatrix}\\
    & := D\mathcal{F}(u^*)_1 + D\mathcal{F}(u^*)_2, 
\end{split}
\end{equation}
where $X(t) = \int_0^{+\infty}x(t,a) \dd a$ and $N_1^* = \int_0^{+\infty}\no^*(a) \dd a$. 

We now study the operator $\mathcal{A} + D\mathcal{F}(u^*) $ and prove that it generates a $C_0$-semi-group. 
We recall the following definition (see e.g. \cite{Engel_Nagel_01} p.87), where $||.||_{\mathcal{L}(X)}$ denotes the operator norm on $\mathcal{L}(X)$ the set of bounded linear operators on $X$.
\begin{definition}[Hille-Yosida operator]
    Let $L : D(L) \subset X \to X$ be a linear operator. If there exist real constants $M \geq 1$ and $\omega \in \R$ such that $(\omega, + \infty) \subset \rho(L)$ and 
    \begin{equation*}
        ||(\lambda- L)^{-n} ||_{\mathcal{L}(X)} \leq \frac{M}{(\lambda - \omega)^n} \text{ for all } n \in \mathbb{N}_+ \text{ , and all } \lambda > \omega. 
    \end{equation*}
    Then the linear operator $(L, D(L))$ is called a Hille-Yosida operator. 
\end{definition}
This definition allows us to enunciate the Hille-Yosida theorem (Corollary 3.21 p.87 in \cite{Engel_Nagel_01}).

\begin{thm}[Hille-Yosida Theorem]
    Let $(L,D(L))$ be a linear operator on a Banach space $X$. If $(L, D(L))$ is a Hille-Yosida operator, then the part of $L$ on $\overline{D(L)}$ generates a $C_0$-continuous semi-group. 
    \label{thm:hille_yosida}
\end{thm}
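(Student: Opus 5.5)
We would prove this classical statement by reducing it to the generation theorem for \emph{densely defined} operators (Feller--Miyadera--Phillips), applied on the closed subspace $X_0 := \overline{D(L)}$. Let $L_0$ be the part of $L$ in $X_0$, that is, $D(L_0) = \{x \in D(L) : Lx \in X_0\}$ and $L_0 x = Lx$. The plan has three steps: show that the resolvent of $L$ restricts to $X_0$ as the resolvent of $L_0$ with the same bounds; show that $D(L_0)$ is dense in $X_0$; then run the Yosida approximation argument on $X_0$.

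\textbf{Step 1 (resolvent of $L_0$).} Fix $\lambda > \omega$ and write $R(\lambda) = (\lambda - L)^{-1}$. For $x \in X_0$ we have $R(\lambda)x \in D(L) \subset X_0$. Moreover
\begin{equation*}
L R(\lambda) x = \lambda R(\lambda)x - x \in X_0,
\end{equation*}
so $R(\lambda)x \in D(L_0)$. Hence $R(\lambda)|_{X_0}$ is a two-sided inverse of $\lambda - L_0$, and $(\omega,+\infty) \subset \rho(L_0)$. The bounds $\|(\lambda - L_0)^{-n}\| \le M/(\lambda-\omega)^n$ are inherited directly from those of $L$. Since $L_0$ has nonempty resolvent set, it is closed.

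\textbf{Step 2 (density), which I expect to be the crux.} Take first $x \in D(L)$. Then
\begin{equation*}
\lambda R(\lambda)x - x = R(\lambda) L x, \qquad \|R(\lambda) L x\| \le \frac{M\|Lx\|}{\lambda-\omega} \xrightarrow[\lambda\to+\infty]{} 0 .
\end{equation*}
The family $\lambda R(\lambda)$ is uniformly bounded for, say, $\lambda \ge 2|\omega|+1$. A $3\varepsilon$ argument therefore extends $\lambda R(\lambda)x \to x$ to every $x \in X_0$. By Step 1, $\lambda R(\lambda)x \in D(L_0)$, so $D(L_0)$ is dense in $X_0$. This is exactly where passing to the part of $L$ matters: $L$ itself need not be densely defined in $X$.

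\textbf{Step 3 (semigroup).} With $L_0$ closed, densely defined in $X_0$, and satisfying the Hille--Yosida resolvent estimates, the standard Feller--Miyadera--Phillips construction applies. Define the Yosida approximants
\begin{equation*}
L_\lambda := \lambda L_0 R(\lambda) = \lambda^2 R(\lambda) - \lambda,
\end{equation*}
which are bounded operators on $X_0$. Expanding the exponential series and using the bounds on all powers of $R(\lambda)$ gives
\begin{equation*}
\|e^{tL_\lambda}\| \le M e^{\omega\lambda t/(\lambda-\omega)}.
\end{equation*}
For $x \in D(L_0)$ one has $L_\lambda x \to L_0 x$, since $L_\lambda x = \lambda R(\lambda) L_0 x$ and Step 2 applies. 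Because the $L_\lambda$ commute, the identity
\begin{equation*}
e^{tL_\lambda}x - e^{tL_\mu}x = \int_0^t e^{(t-s)L_\mu} e^{sL_\lambda}(L_\lambda - L_\mu)x \,\dd s
\end{equation*}
shows that $(e^{tL_\lambda}x)_\lambda$ is Cauchy, uniformly for $t$ in compact sets. Density and the uniform bound extend this convergence to all of $X_0$, and the limit $T(t)$ is a $C_0$-semigroup with $\|T(t)\| \le M e^{\omega t}$.

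It then remains to identify the generator $G$ of $T$ with $L_0$. Passing to the limit in $e^{tL_\lambda}x - x = \int_0^t e^{sL_\lambda}L_\lambda x\,\dd s$ shows $L_0 \subset G$. For $\lambda$ large, both $\lambda - L_0$ and $\lambda - G$ are bijective, and an injective extension of a surjective operator adds nothing, so $G = L_0$.
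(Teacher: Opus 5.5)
Your proof is correct. It is the standard argument behind the result, which the paper does not prove but cites (Engel--Nagel, Corollary~3.21): pass to the part $L_0$ of $L$ in $\overline{D(L)}$, check that it inherits the resolvent bounds and is densely defined there via $\lambda R(\lambda)x \to x$, then invoke the Feller--Miyadera--Phillips generation theorem, which you additionally re-derive via Yosida approximants.
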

We now apply Theorem \ref{thm:hille_yosida} to the operator $\mathcal{A}$. 
\begin{prop}\label{prop:Hille_Yosida}
Under Assumptions \ref{assump:bio}, \ref{assump:bounds} and \ref{assump:positive_compet},  the operator $(\mathcal{A}, D(\mathcal{A}))$ is a Hille-Yosida operator. 
\end{prop}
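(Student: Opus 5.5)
The plan is to compute the resolvent of $\mathcal{A}$ explicitly and to obtain a bound of the form $\|(\lambda-\mathcal{A})^{-1}\|_{\mathcal{L}(X)}\le \frac{1}{\lambda-\omega}$ for $\lambda>\omega$, with $M=1$. Iterating this bound then gives $\|(\lambda-\mathcal{A})^{-n}\|\le (\lambda-\omega)^{-n}$, which is exactly the Hille--Yosida condition. This is the standard argument for age-structured operators written with a boundary condition in the third component (as in \cite{Implication_Perasso_19}). Since $\ks\ge0$ and $\kd\ge 0$ by Assumption \ref{assump:bio}, we should be able to take $\omega=0$, or at worst any $\omega>0$.

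\textbf{Step 1: solving the resolvent equation.} Let $\lambda>0$ and $(f,g,h)\in X=L^1\times\R^2$. We look for $(x,y,0)\in D(\mathcal{A})$ with $(\lambda-\mathcal{A})(x,y,0)^T=(f,g,h)^T$, which reads
\begin{equation*}
x'+(\lambda+\ks)x=f,\qquad (\lambda+\kd)y=g,\qquad x(0)=h .
\end{equation*}
The solution is given explicitly by
\begin{equation*}
x(a)=h\,e^{-\int_0^a(\lambda+\ks(u))\dd u}+\int_0^a e^{-\int_s^a(\lambda+\ks(u))\dd u}f(s)\dd s,\qquad y=\frac{g}{\lambda+\kd}.
\end{equation*}
We need $x\in W^{1,1}$. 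Integrability of $x$ follows from the estimate in Step 2. For $x'$, we use $x'=f-(\lambda+\ks)x$ together with $\ks\in L^\infty$ (Assumption \ref{assump:bounds}\eqref{bound:ks}), which gives $x'\in L^1$. Uniqueness holds because the homogeneous problem with $f=0$, $h=0$ has only the trivial solution. Hence $(0,+\infty)\subset\rho(\mathcal{A})$.

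\textbf{Step 2: the norm estimate.} Using $\ks\ge0$ and Fubini,
\begin{equation*}
\|x\|_1\le \frac{|h|}{\lambda}+\int_0^{+\infty}|f(s)|\int_s^{+\infty}e^{-\lambda(a-s)}\dd a\,\dd s=\frac{|h|+\|f\|_1}{\lambda},
\end{equation*}
and $|y|\le |g|/\lambda$ since $\kd\ge0$. Taking the product norm as the sum of the component norms, we get $\|(\lambda-\mathcal{A})^{-1}(f,g,h)\|\le \frac1\lambda\|(f,g,h)\|$. Iterating gives the bound for every $n$ with $M=1$ and $\omega=0$.

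\textbf{Expected obstacle.} The only delicate point is fixing a convention for the product norm. With the $\ell^1$ sum norm the constant is exactly $1$. With any other norm that is equivalent to it, the bound for $n=1$ picks up a constant $M$, and this breaks simple iteration: we would only get $M^n$. To avoid this I would fix the sum norm, or equivalently note that the Hille--Yosida property is invariant under passing to an equivalent norm once it holds with $M=1$ for some renorming. A second, minor point is to check that the resolvent actually lands in $D(\mathcal{A})$, i.e. the $W^{1,1}$ regularity, which is handled by the boundedness of $\ks$. Once Step 2 is in place, Theorem \ref{thm:hille_yosida} applies directly.
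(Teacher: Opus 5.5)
Your proposal is correct and follows essentially the same route as the paper: solve the resolvent equation explicitly, use Fubini to get $\|(\lambda-\mathcal{A})^{-1}\|_{\mathcal{L}(X)}\le 1/(\lambda-\omega)$ with $M=1$ in the sum (product) norm, then iterate. The only differences are cosmetic: the paper takes $\omega=-\min(\inf \ks,\kd)$ instead of $\omega=0$, and you additionally check that the resolvent lands in $D(\mathcal{A})$ ($W^{1,1}$ regularity and injectivity), which the paper leaves implicit.
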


\begin{proof}
We introduce $\delta = \min(\inf(\ks), \kd) > - \infty$ (potentially $0$). 
Let $(x,y,z) ~\in ~X$, $(g_1,g_2,0)~ \in~ D(\mathcal{A})$ and $\lambda \in \mathbb{R}$ such that $\lambda > -\delta$. We have 
 \begin{align*}
     (\lambda - \mathcal{A})^{-1} \begin{pmatrix}
         x \\ y \\z \end{pmatrix}  = 
         \begin{pmatrix}
         g_1 \\ g_2 \\0 \end{pmatrix} 
         \iff 
         \begin{cases}
           g_1'(a) + \lambda g_1(a) +\ks(a) g_1(a) = x(a)\\
             (\lambda + \kd) g_2 = y \\
            g_1(0) = z.
         \end{cases}
\\ \implies 
    \begin{cases}
        g_1(a) = e^{-\int_0^{a} (\ks(u) + \lambda )\dd u} (z + \int_0^{a}x(u) e^{\int_0^{u} (\ks(x) + \lambda )\dd x} \dd u)\\
        g_2 = \frac{y}{\lambda + \kd}. 
    \end{cases}
\end{align*}
We have the following inequalities,
\begin{align*}
    \int_0^{+\infty} \int_0^{a} x(u) e^{-\int_u^{a} \ks(x) + \lambda \dd x} \dd u \dd a & \leq  \int_0^{+\infty} \int_0^{a} x(u) e^{-(a-u)(\delta + \lambda)} \dd u \dd a\\
    & \leq \int_0^{+\infty} x(u) \int_u^{+\infty}  e^{-(a-u)(\delta + \lambda)} \dd a \dd u\\
    & \leq \frac{1}{\lambda + \delta} ||x||_1. 
\end{align*}

By integrating $g_1$ and using the bound above, we obtain,
\begin{equation*}
    ||g_1||_1 + |g_2| \leq \frac{1}{\lambda + \delta}( ||x||_1 + |y| + |z|). 
\end{equation*}
Thus $||(\lambda - \mathcal{A})^{-1}||_{\mathcal{L}(X)} \leq \frac{1}{\lambda + \delta}$ and the inequality with $||(\lambda - \mathcal{A})^{-n}||_{\mathcal{L}(X)}$ follows directly. 
Finally, $(\mathcal{A}, D(\mathcal{A}))$ is a Hille-Yosida operator. 
\end{proof}

\begin{prop}
   Under Assumptions \ref{assump:bio}, \ref{assump:bounds} and \ref{assump:positive_compet}, $\mathcal{A} + D\mathcal{F}(u^*) $ generates a $C_0$-continuous semi-group on $\overline{D(\mathcal{A})}$.
\end{prop}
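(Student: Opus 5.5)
We will combine Proposition \ref{prop:Hille_Yosida} with a bounded perturbation argument for Hille--Yosida operators. Write $\mathcal{A} + D\mathcal{F}(u^*) = \mathcal{A} + D\mathcal{F}(u^*)_1 + D\mathcal{F}(u^*)_2$ as in \eqref{eq:DFU}. The term $D\mathcal{F}(u^*)_1$ multiplies the first component by the constant $-(\CtotNS N_1^* + \CtotS N_2^*) = -\lambda_0$ (or $0$ at the trivial state). We absorb it into $\mathcal{A}$: the operator $\mathcal{A}_1 := \mathcal{A} + D\mathcal{F}(u^*)_1$ has the same structure as $\mathcal{A}$ with $\ks$ replaced by $\ks + \lambda_0$. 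The computation of the proof of Proposition \ref{prop:Hille_Yosida} therefore applies verbatim with $\delta$ replaced by $\min(\inf \ks + \lambda_0, \kd)$. Hence $\mathcal{A}_1$ is Hille--Yosida with $M=1$, and its resolvent bound is $1/(\lambda+\delta')$.

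Next, we show that $D\mathcal{F}(u^*)_2$ is a bounded linear operator from $\overline{D(\mathcal{A})} = L^1 \times \R \times \{0\}$ into $X$. Its first component is $-(\CtotNS X + \CtotS y)\no^*$. Since $\no^* \in L^1$ by Proposition \ref{prop:exp_ss}, its $L^1$ norm is bounded by $\|\no^*\|_1(\CtotNS\|x\|_1 + \CtotS |y|)$. The second component is bounded by a constant times $\|x\|_1 + |y|$, using $\ks \in L^\infty$ from Assumption \ref{assump:bounds}\eqref{bound:ks}. The third component is bounded by $\|b\|_\infty \|x\|_1$, using Assumption \ref{assump:bounds}\eqref{bound:b}. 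So $\|D\mathcal{F}(u^*)_2\|_{\mathcal{L}(\overline{D(\mathcal{A})},X)} \le K$ for an explicit constant $K$.

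The key step is that a Hille--Yosida operator perturbed by a bounded operator $B \in \mathcal{L}(\overline{D(\mathcal{A})}, X)$ remains Hille--Yosida. We expect this to be the main obstacle, since the standard bounded perturbation theorem is stated for generators on the whole space. Here the domain is non-dense: the third coordinate encodes the boundary condition, and $D\mathcal{F}(u^*)_2$ has a nonzero third component. We would invoke the perturbation result for Hille--Yosida operators with non-dense domain (e.g. Arendt's or Thieme's integrated semigroup framework, or the corresponding statement in Magal--Ruan). We will also check it directly, which is simpler in our situation because $M=1$. For $\lambda > -\delta' + K$, write
\[
(\lambda - \mathcal{A}_1 - B)^{-1} = (\lambda-\mathcal{A}_1)^{-1}\sum_{j\ge0}\big(B(\lambda-\mathcal{A}_1)^{-1}\big)^j.
\]
This Neumann series converges, since $\|B(\lambda-\mathcal{A}_1)^{-1}\| \le K/(\lambda+\delta') < 1$. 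It gives $\|(\lambda-\mathcal{A}_1-B)^{-1}\| \le 1/(\lambda+\delta'-K)$. Iterating this bound yields the estimate for all powers $n$ with $M=1$ and $\omega = K-\delta'$. If a general $M>1$ were needed, we would instead renorm the space, as in Engel--Nagel III.1.3.

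Finally, Theorem \ref{thm:hille_yosida} then implies that the part of $\mathcal{A}+D\mathcal{F}(u^*)$ in $\overline{D(\mathcal{A})}$ generates a $C_0$-semigroup on $\overline{D(\mathcal{A})}$, which is the claim.
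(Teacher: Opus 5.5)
Your proof is correct and follows the same route as the paper: the Hille--Yosida property of $\mathcal{A}$ from Proposition \ref{prop:Hille_Yosida}, boundedness of the linearised term, then a bounded-perturbation step, which the paper simply cites (Proposition 4.14 in \cite{webb1985theory}) whereas you verify it directly through the Neumann series on the Hille--Yosida resolvent estimates. Your version is the more careful one, since the birth component $\int_0^{+\infty} b(a)x(a)\dd a$ of $D\mathcal{F}(u^*)$ does not map $\overline{D(\mathcal{A})}=L^1\times\R\times\{0\}$ into itself, so a perturbation theorem for the generator of the part of $\mathcal{A}$ in $\overline{D(\mathcal{A})}$ would not apply verbatim (absorbing $D\mathcal{F}(u^*)_1$ into $\mathcal{A}$ is harmless but not needed).
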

\begin{proof}

By Proposition \ref{prop:Hille_Yosida},  $\mathcal{A}$ is a Hille-Yosida operator thus it generates a $C_0-$continuous semi-group on $\overline{D(\mathcal{A})}$ by Theorem \ref{thm:hille_yosida}. Furthermore, since $b$ and $\ks$ are positive and bounded by Assumptions \ref{assump:bio} and \ref{assump:bounds}, $D\mathcal{F}(u^*)$ is a bounded linear operator. It follows with Proposition 4.14 in \cite{webb1985theory} that $\mathcal{A} + D\mathcal{F}(u^*) $ generates a $C_0$-continuous semi-group on $\overline{D(\mathcal{A})}$.
\end{proof}

\begin{remark}
    Formulated as such, the fact that $\mathcal{A}$ generates a $C_0$-continuous semi-group cannot be determined by the Lumer-Phillips theorem as $\mathcal{D}(\mathcal{A})$ is not dense in $X$ (see \cite{Hopf_YAN_19}). Another formulation of the problem where the initial condition is directly embedded in the definition space of $\mathcal{A}$ would allow to use the Lumer-Phillips theorem as is done in \cite{Implication_Perasso_19}. 
\end{remark}

Now that we have proved that $\mathcal{A} + D\mathcal{F}(u^*) $ generates a $C_0$-continuous semi-group, we study the asymptotic stability of that semi-group using spectral arguments and following similar steps as in \cite{Implication_Perasso_19}. We begin by introducing some notations (see e.g. \cite{Engel_Nagel_01}, IV.2.). 

Let $\mathcal{L}(X)$ denote the set of linear operators on $X$ and $\mathcal{K}(X)$ the subset of compact operators, we define the essential norm $||T||_{ess}$ for $T\in \mathcal{L}(X)$ by 
\begin{equation*}
    ||T||_{ess} = \inf_{K \in \mathcal{K}(X)}||T-K||_{\mathcal{L}(X)}. 
\end{equation*}
We define the essential growth bound $\omega_{ess}(L) \in [-\infty, +\infty)$, 
\begin{equation}
    \omega_{ess}(L) = \lim_{t\to \infty} \frac{1}{t}\log(||T_{L}(t)||_{ess}). 
\end{equation}
And finally, we set 
\begin{equation}
    \omega_{0}(L) = \max\{\omega_{ess}(L), s(L) \}.\label{eq:wo}
\end{equation}
With these notations given, we can now state the following stability result,
\begin{thm}[Theorem E in \cite{pruss1981equilibrium}] \label{thm:eigen_2table}
    Let $L$ be a generator of a $C_0$-semi-group in a Banach space $X$ and $F: X \to X$ be Lipschitz differentiable at $\phi$ where $\phi$ is such that $L \phi + F(\phi) = 0$. Put $L_{\phi} = L + F'(\phi)$. Then, we have
    \begin{itemize}
        \item $\omega_0(L_{\phi}) < 0 $ implies local asymptotic stability of $\phi$
        \item $\omega_0(L_{\phi}) > 0 $ and $\omega_{ess}(L_{\phi}) \leq 0$ imply instability of $\phi$. 
    \end{itemize}
\end{thm}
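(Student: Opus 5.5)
Since this is a classical result quoted from \cite{pruss1981equilibrium}, we only reconstruct it; the plan is the standard linearised stability principle for semilinear Cauchy problems. Write a solution $u$ of $\dot u = Lu + F(u)$ as $u = \phi + v$. Using $L\phi + F(\phi) = 0$, the perturbation satisfies $\dot v = L_{\phi} v + G(v)$ with $G(v) = F(\phi+v) - F(\phi) - F'(\phi)v$. Lipschitz differentiability of $F$ at $\phi$ gives $\|G(v)\| \leq r(\|v\|)\,\|v\|$ with $r(s)\to 0$ as $s \to 0$, and $G$ Lipschitz near $0$. Mild solutions then satisfy the variation of constants formula
\begin{equation*}
v(t) = T_{L_\phi}(t)v_0 + \int_0^t T_{L_\phi}(t-s)\,G(v(s))\,\dd s .
\end{equation*}

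The first key step is the identification of $\omega_0(L_\phi) = \max\{\omega_{ess}(L_\phi), s(L_\phi)\}$ with the growth bound of $T_{L_\phi}$. Outside the disc of radius $e^{t\omega_{ess}}$, the spectrum of $T_{L_\phi}(t)$ consists of isolated eigenvalues of finite algebraic multiplicity. By the spectral mapping theorem for point spectrum, these are exactly $e^{t\lambda}$ with $\lambda \in \sigma_p(L_\phi)$. Hence the spectral radius of $T_{L_\phi}(t)$ equals $e^{t\omega_0(L_\phi)}$, and for every $\omega > \omega_0$ there is $M_\omega \geq 1$ with $\|T_{L_\phi}(t)\| \leq M_\omega e^{\omega t}$.

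For stability, fix $\omega_0 < \omega < 0$ and choose $\delta$ so small that $M_\omega\, r(\delta) < |\omega|/2$. As long as $\|v(s)\| \leq \delta$, Gronwall's lemma applied to $e^{-\omega t}\|v(t)\|$ gives
\begin{equation*}
\|v(t)\| \leq M_\omega e^{(\omega + M_\omega r(\delta))t}\|v_0\| \leq M_\omega e^{\omega t/2}\|v_0\| .
\end{equation*}
If $\|v_0\| \leq \delta/M_\omega$, a continuation argument shows that $v$ never leaves the ball, so $\phi$ is locally exponentially, hence asymptotically, stable.

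For instability, assume $\omega_0 > 0 \geq \omega_{ess}$. Then $\omega_0 = s(L_\phi)$, and the set of eigenvalues with real part $\geq \omega_0/2$ is finite, each of finite multiplicity. Let $P$ be the associated Riesz projection. It splits $X = X_u \oplus X_s$, with $X_u$ finite dimensional and invariant. On $X_u$ one has $\|T(-t)P\| \leq C e^{-\beta t}$ for some $\beta > \omega_0/2$. On $X_s$ one has $\|T(t)(I-P)\| \leq C e^{\gamma t}$ for some $\gamma < \beta$; this bound requires the essential bound to lie strictly below $\beta$, which holds because $\omega_{ess} \leq 0 < \omega_0/2$. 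The hardest step is the nonlinear argument. I would first try a Lyapunov-type cone argument: take $V_u = \|Pv\|$ and $V_s = \|(I-P)v\|$, and show that for $\delta$ small the cone $\{V_s \leq V_u\}$ is forward invariant while $V_u$ grows at rate at least $(\beta - Cr(\delta))$. Then any initial datum in $X_u$, however small, leaves the $\delta$-ball. If this differential inequality is too delicate for mild solutions, the fallback is to construct a local unstable manifold by a Lyapunov–Perron fixed point on backward-in-time exponentially decaying functions and exhibit a trajectory emanating from $\phi$.
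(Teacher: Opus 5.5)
The paper does not prove this statement. Theorem E is quoted from Pr\"uss and used as a black box, and only the stability bullet is needed for Theorem \ref{thm:stab}. Your argument for that bullet is the standard one and is correct, with two small points:
\begin{itemize}
\item To identify $\omega_0(L_\phi)$ with the growth bound (Engel--Nagel, Cor.\ IV.2.11) you also need the spectral inclusion $e^{t\sigma(L_\phi)}\subset\sigma(T_{L_\phi}(t))$. This shows that the spectrum of $L_\phi$ to the right of $\omega_{ess}$ is point spectrum, which is \eqref{eq:wo_w}.
\item In the continuation step, take $\|v_0\|<\delta/M_\omega$ strictly.
\end{itemize}

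\textbf{The genuine gap is the instability bullet.} You leave it as a plan, and its main route fails as stated.

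\emph{Why the cone argument fails in the original norm.} Take $V_u=\|Pv\|$ and $V_s=\|(I-P)v\|$. The linear bounds $\|T(t)(I-P)\|\le Ce^{\gamma t}$ and $\|T(-t)P\|\le Ce^{-\beta t}$ carry a constant $C\ge1$, and in general $C>1$ because of the oblique projection and non-normal transients. So neither $D^+V_s\le\gamma V_s+\dots$ nor $D_+V_u\ge\beta V_u-\dots$ holds. Consequently the cone $\{V_s\le V_u\}$ need not be forward invariant, even for the linear flow.

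\emph{The missing ingredient is adapted equivalent norms.} For example, set $|x|_u=\sup_{t\ge0}e^{\beta t}\|T(-t)x\|$ on $X_u$ and $|y|_s=\sup_{t\ge0}e^{-\gamma t}\|T(t)y\|$ on $X_s$. These give exactly $|T(t)x|_u\ge e^{\beta t}|x|_u$ and $|T(t)y|_s\le e^{\gamma t}|y|_s$. Write the variation-of-constants formula on $[t,t+h]$ for $Pv$ and $(I-P)v$. With $D_+$ and $D^+$ the lower and upper right Dini derivatives, this yields
\begin{itemize}
\item $D_+|Pv|_u\ge\beta|Pv|_u-Kr(\delta)\|v\|$,
\item $D^+|(I-P)v|_s\le\gamma|(I-P)v|_s+Kr(\delta)\|v\|$,
\end{itemize}
with no differentiability of the mild solution required. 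On the cone boundary, $|(I-P)v|_s-|Pv|_u$ then has negative upper Dini derivative once $r(\delta)$ is small relative to $\beta-\gamma$. So the cone is invariant, and $|Pv|_u$ grows at a rate close to $\beta>0$ until $v$ leaves the $\delta$-ball. This uses only $\|G(v)\|=o(\|v\|)$.

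\emph{The fallback is not a safe substitute.} The Lyapunov--Perron contraction needs $\mathrm{Lip}(G|_{B_\delta})\to0$ as $\delta\to0$. Fr\'echet differentiability at the single point $\phi$ plus Lipschitz continuity of $F$ does not give this; think of $x^2\sin(1/x)$. It holds only if you read ``Lipschitz differentiable'' in that strong sense.

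\emph{Choice of splitting level.} Place the splitting level inside a spectral gap, so that $\gamma<\beta$ with $\beta>0$. This is possible because only finitely many eigenvalues lie to the right of any level above $\omega_{ess}$. Your requirement $\beta>\omega_0/2$ fails if some eigenvalue lies exactly on $\mathrm{Re}\,\lambda=\omega_0/2$.
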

Furthermore, by Corollary 2.11 in \cite{Engel_Nagel_01}, we have 
\begin{equation}
    \forall \omega > \omega_{\text{ess}}, \sigma(L) \cap \{\lambda \in \mathbb{C}, Re(\lambda) \geq \omega \} \subset \sigma_p(L). \label{eq:wo_w}
\end{equation}
In particular, this means that if $\omega_{\text{ess}}(L) < 0$, it suffices to study $\sigma_p(L)$ to determine if the spectral radius of $L$ is strictly positive.

We now apply these results to the operator at hand by studying its essential growth bound and spectral radius. To that effect, we first prove that $D\mathcal{F}(u^*)_2$ as defined by equation \eqref{eq:DFU} is a compact operator, for which the contribution to the essential growth bound is null. 

\begin{prop}\label{prop:d2_compact}
    Under Assumptions \ref{assump:bio}, \ref{assump:bounds}\eqref{bound:b} and \ref{assump:positive_compet}, $D\mathcal{F}(u^*)_2$ is a compact operator. 
\end{prop}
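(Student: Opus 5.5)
The plan is to show that $D\mathcal{F}(u^*)_2$ has finite rank; a bounded operator of finite rank is compact. Write $(x,y,0)\in X$ and set $X_x := \int_0^{+\infty} x(a)\dd a$. The three components of $D\mathcal{F}(u^*)_2(x,y,0)$ then read:
\begin{itemize}
\item the first is $-(\CtotNS X_x + \CtotS y)\,\no^*(\cdot)$, a scalar multiple of the fixed function $\no^*$;
\item the second is a real number;
\item the third is the real number $\int_0^{+\infty} b(a)x(a)\dd a$.
\end{itemize}
Hence the range of $D\mathcal{F}(u^*)_2$ lies in $\mathrm{span}\{(\no^*,0,0),(0,1,0),(0,0,1)\}$, a subspace of $X$ of dimension at most three.

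Next I check that $D\mathcal{F}(u^*)_2$ is bounded, so that it is a well-defined element of $\mathcal{L}(X)$. The coefficients are linear functionals of $(x,y)$, and I bound each one.
\begin{itemize}
\item $|X_x|\le \|x\|_1$.
\item $\left|\int_0^{+\infty} b\,x\right|\le \|b\|_\infty\|x\|_1$, using Assumption \ref{assump:bounds}\eqref{bound:b}.
\item $\left|\int_0^{+\infty}(\ks+\CSNS N_1^*+\CSS N_2^*)\,x\right|\le (\|\ks\|_\infty+\CSNS N_1^*+\CSS N_2^*)\|x\|_1$.
\item The remaining terms are constants, built from $N_1^*, N_2^*$ and the competition coefficients of Assumption \ref{assump:positive_compet}, multiplied by $X_x$ or by $y$.
\end{itemize}
The first component needs $\no^*\in L^1$. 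This holds by Proposition \ref{prop:exp_ss}, which gives $\no^*(a)=\no^*(0)e^{-\int_0^a(\ks+\lambda_0)}$ with $\lambda_0>0$, so $\|\no^*\|_1=N_1^*<\infty$.

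The one real obstacle is the integrability of $\ks$ against $x$. The statement cites only Assumption \ref{assump:bounds}\eqref{bound:b}, not \eqref{bound:ks}. Two routes are available.
\begin{itemize}
\item Invoke the standing boundedness of $\ks$ used to define the semigroup; Proposition \ref{prop:Hille_Yosida} already assumes all of Assumption \ref{assump:bounds}.
\item Alternatively, restrict to the natural domain on which $\int_0^{+\infty}\ks x$ is finite.
\end{itemize}
I will take the first route and note it explicitly. With that, the operator is bounded with finite-dimensional range, hence compact.

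The purpose of this is to use stability of the essential growth bound under compact perturbations, as in \cite{Engel_Nagel_01}: adding the compact operator $D\mathcal{F}(u^*)_2$ leaves the essential growth bound unchanged. As a result, $\omega_{ess}(\mathcal{A}+D\mathcal{F}(u^*))=\omega_{ess}(\mathcal{A}+D\mathcal{F}(u^*)_1)$. The right-hand side is at most $-\delta-(\CtotNS N_1^*+\CtotS N_2^*)$, because $D\mathcal{F}(u^*)_1$ is multiplication by a negative constant.
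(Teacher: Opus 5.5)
Your argument is correct, and it differs from the paper's at the one step where the paper does real work.

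\textbf{Comparison with the paper.} The paper handles the second and third components by finite-dimensionality of their range. For the first component, $G_1(x,y)=-(\CtotNS\int_0^{+\infty}x\,\dd a+\CtotS y)\,\no^*$, it invokes the Riesz--Fr\'echet--Kolmogorov criterion in $L^1(\R_+)$, checking uniform continuity of translations and uniformly small tails on bounded sets. You observe instead that $G_1$ has rank one, with range $\R\,\no^*$. So the whole operator has rank at most three, and compactness reduces to boundedness.

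Your route is strictly simpler. Both of the paper's RFK estimates factor as $|\CtotNS\int x+\CtotS y|$ times a quantity depending only on $\no^*$, which is exactly this rank-one structure. The criterion would only be needed if the linearisation produced an infinite-rank age-dependent term, and it does not.

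\textbf{The boundedness hypothesis.} Your approach also exposes a point the paper's proof passes over. ``Finite-dimensional range implies compact'' requires the operator to be bounded. The functional $x\mapsto\int_0^{+\infty}\ks x\,\dd a$ in the second component is bounded on $L^1$ only if $\ks\in L^\infty$. That is Assumption~\ref{assump:bounds}\eqref{bound:ks}, which the statement does not list. The paper does use it, for boundedness of $D\mathcal{F}(u^*)$, in the preceding generation result. You were right to flag this and to import that hypothesis explicitly.

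\textbf{Your closing remark.} The remark on the essential growth bound lies outside the statement, and its justification is slightly loose. $D\mathcal{F}(u^*)_1$ multiplies only the first component by $-(\CtotNS N_1^*+\CtotS N_2^*)$. Hence the $y$-component of the semigroup generated by $\mathcal{A}+D\mathcal{F}(u^*)_1$ decays only like $e^{-\kd t}$. Your bound $-\delta-(\CtotNS N_1^*+\CtotS N_2^*)$ still holds, because that component is one-dimensional and so contributes nothing to the essential norm.
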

\begin{proof}
    We write 
    \begin{equation*}
        D\mathcal{F}(u^*)_2 = \begin{pmatrix}
            G_1\\
            G_2\\
            G_3
        \end{pmatrix}
    \end{equation*}
    where $(G_1,G_2,G_3)$ are the components of $D\mathcal{F}(u^*)$ given by \eqref{eq:DFU}. 
    As the range of $G_2$ and $G_3$ is finite-dimensional, $G_2$ and $G_3$ are compact. For $G_1$, we apply the  Riesz-Fréchet-Kolmogorov criterion (see e.g. \cite{Brezis_analysis} p.72) in $L^1(\R_+)$ as done in the proof of Lemma 3.4 in \cite{Implication_Perasso_19}. 
    Let $S$ be a bounded subset of $X$ and $h \in \R_+$. We denote $\tau_h(f) = f(.+h)$ the translation operator in $L^1(\R_+)$.  For $(x,y,0)\in S$, we have 
    \begin{align*}
        ||\tau_h(G_1(x,y)) &- G_1(x,y)||_1 \leq |\CtotNS \int_0^{+\infty}x(a) \dd a + \CtotS  y | \cdot \int_0^{+\infty}| n^*_1(a+h) - n^*_1(a)| \dd a \\
        & \leq \max(\CtotNS ,\CtotS )\sup_{(x,y,0) \in S}||(x,y,0)|| \int_0^{+\infty}| n^*_1(a+h) - n^*_1(a)| \dd a \xrightarrow[h \to 0]{} 0.
    \end{align*}
Thus 
\begin{equation*}
   \sup_{(x,y,0) \in S} ||\tau_h(G_1(x,y)) - G_1(x,y)||_1 \xrightarrow[h \to 0]{} 0.
\end{equation*}
We also have 
\begin{align*}
    \int_r^{+\infty} |G_1(x,y,0)(a)| \dd a \leq \Big|\CtotNS \int_0^{+\infty}x(a) \dd a + \CtotS  y \Big| \cdot \int_r^{+\infty} n^*_1(a) \dd a \xrightarrow[r \to +\infty]{} 0,
\end{align*}
 as $\no^* \in L_1(\R_+)$. Since $G_1(S)$ is bounded, the Riesz-Fréchet-Kolmogorov criterion allows to conclude that $G_1(S)$ is relatively compact in $L_1(\R_+)$ and $G_1$ is a compact operator.
Finally, $D\mathcal{F}(u^*)_2$ is a compact operator. 

\end{proof}

We now present the following result on the essential growth bound of $\mathcal{A} + D\mathcal{F}(u^*)$. 
\begin{prop} \label{thm:omega_ess}
    Under Assumptions \ref{assump:bio}, \ref{assump:bounds} and \ref{assump:positive_compet}, $\omega_{ess}(\mathcal{A} + D\mathcal{F}(u^*) )\leq  0$. Furthermore, if $(\CtotNS N_1^* + \CtotS N_2^*) > 0$, $\omega_{ess}(\mathcal{A} + D\mathcal{F}(u^*) )<   0$. 
\end{prop}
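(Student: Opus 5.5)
Since $\mathcal{A} + D\mathcal{F}(u^*) = (\mathcal{A} + D\mathcal{F}(u^*)_1) + D\mathcal{F}(u^*)_2$ and Proposition \ref{prop:d2_compact} shows that $D\mathcal{F}(u^*)_2$ is compact, I will first reduce the problem to the operator $\mathcal{B} := \mathcal{A} + D\mathcal{F}(u^*)_1$. The essential growth bound is invariant under compact bounded perturbations of the generator; this is the classical result used in \cite{Implication_Perasso_19} and \cite{webb1985theory} for age-structured semigroups, and it holds whenever the perturbation is compact and bounded. Hence $\omega_{ess}(\mathcal{A} + D\mathcal{F}(u^*)) = \omega_{ess}(\mathcal{B})$, and since $\omega_{ess}(\mathcal{B}) \leq \omega_0(\mathcal{B})$, which equals the growth bound of the semigroup generated by $\mathcal{B}$, it suffices to bound the growth of that semigroup.

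The operator $\mathcal{B}$ is explicit. Writing $\lambda^* := \CtotNS N_1^* + \CtotS N_2^* \geq 0$, it acts as
\begin{equation*}
\mathcal{B}\begin{pmatrix} x \\ y \\ 0\end{pmatrix} = \begin{pmatrix} -x' - (\ks + \lambda^*)x \\ -\kd y \\ -x(0)\end{pmatrix}.
\end{equation*}
This is $\mathcal{A}$ shifted by $-\lambda^*$ on the first component only. The part of $\mathcal{B}$ in $\overline{D(\mathcal{A})}$ therefore generates a decoupled semigroup:
\begin{itemize}
\item on the first component, the transport semigroup with zero boundary condition, $x(t,a) = x_0(a-t)e^{-\int_{a-t}^a (\ks(u)+\lambda^*)\dd u}\one_{a>t}$;
\item on the second component, $y(t) = e^{-\kd t}y_0$.
\end{itemize}
Using $\ks \geq 0$ and $\kd \geq 0$ from Assumption \ref{assump:bio}, the norm of this semigroup is at most $e^{-\min(\lambda^*, \kd)t}$. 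The same bound follows from the resolvent estimate in the proof of Proposition \ref{prop:Hille_Yosida}, which gives $\|(\lambda - \mathcal{B})^{-1}\| \leq 1/(\lambda + \min(\lambda^* + \inf \ks, \kd))$. Consequently $\omega_{ess}(\mathcal{B}) \leq -\min(\lambda^*, \kd) \leq 0$, which proves the first claim.

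For the strict inequality when $\lambda^* > 0$, the transport part alone decays at rate $\lambda^*$. The ODE component, however, only decays at rate $\kd$, which could be $0$. I see two ways around this.
\begin{itemize}
\item Under the standing assumptions of Theorem \ref{thm:stab}, $\kd \cdot \CDSS \neq 0$ gives $\kd > 0$ directly.
\item Without that assumption, I would observe that the second component is one-dimensional. It contributes a finite-rank, hence compact, piece of the semigroup, so it can be removed from the essential norm. Concretely, $T_{\mathcal{B}}(t) = T_1(t) \oplus e^{-\kd t}$, and the rank-one part $0 \oplus e^{-\kd t}$ is compact. This gives $\|T_{\mathcal{B}}(t)\|_{ess} \leq \|T_1(t)\| \leq e^{-\lambda^* t}$, so $\omega_{ess} \leq -\lambda^* < 0$.
\end{itemize}
The second route also recovers the first claim without using $\kd$ at all, so I would adopt it.

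The step I expect to need the most care is justifying the invariance of $\omega_{ess}$ under the compact perturbation $D\mathcal{F}(u^*)_2$ in the non-densely defined (Hille–Yosida) setting. I would cite the perturbation theorem for the essential growth bound in this setting (e.g. Ducrot–Liu–Magal, or Engel–Nagel IV.2.12 combined with the variation of constants formula). For this I would need to check that the perturbation, restricted to $\overline{D(\mathcal{A})}$, produces a compact difference of semigroups.
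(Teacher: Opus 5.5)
Your proposal is correct and follows essentially the same route as the paper. You discard the compact part $D\mathcal{F}(u^*)_2$ (Proposition~\ref{prop:d2_compact}) and then bound the essential norm of the semigroup of $\mathcal{A}+D\mathcal{F}(u^*)_1$ by $e^{-(\CtotNS N_1^*+\CtotS N_2^*)t}$ using the explicit damped transport. Your version is slightly tighter than the written proof in three places: with the zero boundary condition encoded in $\mathcal{A}$ the semigroup vanishes for $a<t$, so no separate compact piece $T_2$ is needed; you treat the $y$-component $e^{-\kd t}$ explicitly as a rank-one compact term, where the paper simply writes $0$; and you rightly flag that the invariance of $\omega_{ess}$ under the compact perturbation needs a non-densely-defined result such as Ducrot--Liu--Magal, which the paper only asserts.
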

\begin{proof}
    We recall that $\mathcal{A} + D\mathcal{F}(u^*) = \mathcal{A} + D\mathcal{F}(u^*)_1 + D\mathcal{F}(u^*)_2$. Since $D\mathcal{F}(u^*)_2$ is compact by Proposition \ref{prop:d2_compact}, $||D\mathcal{F}(u^*)_2||_{ess} = 0 $. Thus, $\omega_{ess}(\mathcal{A} + D\mathcal{F}(u^*) ) = \omega_{ess}(\mathcal{A} + D\mathcal{F}(u^*)_1)$. 
    We can express the semi-group generated by $\mathcal{A} + D\mathcal{F}(u^*)_1$ as follows 
    \begin{align*}
        T_{\mathcal{A} + D\mathcal{F}(u^*)_1}(t) \begin{pmatrix}
            x_0\\y_0
        \end{pmatrix}
        &= 
        \begin{pmatrix}
            x_0(a-t)e^{-\int_{a-t}^{a}( \ks(u) + \CtotNS N_1^* + \CtotS N_2^* ) \dd u} \one_{a \geq t} \\0
        \end{pmatrix}\\
        & + \begin{pmatrix}
             x_0(t-a)e^{-\int_0^{a}(\ks(u) +  \CtotNS N_1^* + \CtotS N_2^* )  \dd u }\one_{a < t}\\
            0
        \end{pmatrix} \\
        & := T_1 + T_2.
        \end{align*}
We have,
\begin{equation*}
    ||T_1 (x_0,y_0)^T||_{\mathcal{L}(X)} \leq e^{-  (\CtotNS N_1^* + \CtotS N_2^*)  t} \int_0^{+\infty}x_0(u) \dd u.
\end{equation*}
Hence 
\begin{equation*}
   ||T_1 (t)||_{ess} \leq  ||T_1 (t)||_{\mathcal{L}(X)} \leq e^{- (\CtotNS N_1^* + \CtotS N_2^*)t}.  
\end{equation*}
Furthermore, we notice that our operator $T_2$ is exactly of the form of the operator $T_2$ in the proof of Theorem 3.3 in \cite{Implication_Perasso_19}. Therefore,  $T_2$ is compact and $||T_2 (t)||_{ess} = 0$. Hence
\begin{equation*}
     ||T_{\mathcal{A} + D\mathcal{F}(u^*)_1}(t) ||_{ess} \leq e^{-(\CtotNS N_1^* + \CtotS N_2^*)t}. 
\end{equation*}
And finally,
\begin{equation*}
    \omega_{ess}(\mathcal{A} + D\mathcal{F}(u^*)_1(t)) \leq -(\CtotNS N_1^* + \CtotS N_2^*). 
\end{equation*}
\end{proof}

As the essential growth bound is negative, by Theorem \ref{thm:eigen_2table} and \eqref{eq:wo} and \eqref{eq:wo_w}, the linearised system around a steady state is stable iff all of its eigenvalues have strictly negative real parts, which is what we show to prove Theorem \ref{thm:stab} in the Appendix \ref{sec:app_thm}.

\subsection{Global long-time convergence}
\label{sec:global_PDE}
We have already shown that under the non-extinction condition, the unique positive steady state of our system is locally asymptotically stable, but that does not imply that any trajectory will converge to that steady state. We now prove a stronger result of global attractiveness, but which necessitates stronger technical assumptions on the different rates. Global convergence results are difficult to obtain for non-linear systems. Oftentimes, such results are shown by assuming weak or infinite non-linearities as in \cite{Pakdaman_09,perthame2025stronglynonlinearagestructured,Torres_2022}. Here, we present a global convergence result that does not require such strict conditions on the non-linearities.

\begin{assumption}
\label{assump:bounds_rates}
$\exists \underline{k}>0$ such that $\forall a \geq 0$, 
\begin{align}
    &\underline{k} \leq \ks(a)\label{eq:low_k}.  
\end{align}
\end{assumption}

We introduce the following quantities, 
\begin{align}
    &\bar{N_1}:=  \max\Big(N_1(0), \frac{||b||_{\infty}- \underline{k}}{\CtotNS }\Big) \label{eq:N1bar}, \\
    &\bar{N}_2 :=  \max\Big(N_2(0), \frac{||k||_{\infty} + \CSNS\bar{N_1}^2 }{\kd - \CSS \bar{N_1}}\Big), \label{eq:N2bar}\\
    & \underline{N_1} :=  \min\Big( \frac{\lambda_0}{M(||b||_{\infty} - \underline{\ks})} , \frac{\lambda_0/M -\CtotS \bar{N_2}}{\CtotNS}\Big) \label{eq:N1under}. 
\end{align}

Finally, we introduce the following assumption on the rates, 
\begin{assumption} \label{assump:rates}
\begin{align}
&\kd \geq \CSNS \bar{N_1},\label{assump:death} \tag{i} \\
&  \frac{\CtotNS}{\CtotS} \geq \frac{||k||_{\infty} + \CDSNS \bar{N_2}+ \CSS N_2^* + \CSNS  N_1^* }{2 \CDSNS\underline{N_1}}+ \frac{\CSNS}{2\CDSNS},\label{assump:ratio} \tag{ii} \\
& \kd \geq  - \frac{1}{2}\Big((||k||_{\infty}+ 2\CSS + \CSNS) \bar{N_1}+ \CDSNS \bar{N_2} -2  \CDSS \underline{N_2} + (\CSS-2\CDSS) N_2^*  +(\CSNS -\CDSNS) N_1^* \Big) \label{assump:death2} \tag{iii} \\
& \exists M >1 , \bar{N_2} < \frac{\lambda_0}{ \CtotS M } \label{assump:M},\tag{iv} 
\end{align}
 where $\bar{N_1}, \bar{N_2}$ and $\underline{N_1}$ are defined by \eqref{eq:N1bar}, \eqref{eq:N2bar} and \eqref{eq:N1under} and $\lambda_0$ is the unique solution to $\int_0^{+\infty} b(a) e^{-\int_0^{a} \ks(u) + \lambda \dd u } \dd a = 1$ as defined in \eqref{eq:lambda}.

\end{assumption}
In the following, $(\no^*, N_2^*)$ denotes the unique non-trivial positive steady state of \eqref{eq:syst_linear_PDEODE}, which exists under the assumptions stated in Theorem \ref{thm:unique_2S} and $N_1^*$ is defined by \eqref{eq:def_N}. 
The main result of this section is as follows, 
\begin{thm}\label{thm:conv_global}
Suppose Assumptions \ref{assump:bio}, \ref{assump:bounds}, \ref{ass:for_lambda_0},  \ref{assump:positive_compet} ,\ref{assump:bounds_rates} and \ref{assump:rates} are verified and $\CtotNS, \CDSNS  >0$.  
 Let $(\no,N_2) \in C(\R_+; L^1(\R_+))\times C(\R_+)$ be a positive solution of \eqref{eq:syst_linear_PDEODE} with $N_1$ defined by \eqref{eq:def_N}. 

Then, we have, for any $a \geq 0$,
\begin{align*}
  \no(t,a) \xrightarrow[t \to + \infty]{}\no^*(a) \\
  N_2(t) \xrightarrow[t \to + \infty]{}N_2^*. 
\end{align*} 
Furthermore, the convergence of $\no$ is uniform in $a$ on bounded intervals. 
\end{thm}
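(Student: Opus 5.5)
The plan is to renormalise the age profile of phase~1 and study a reduced system. First I establish uniform bounds $\underline{N_1}\le N_1(t)\le \bar N_1$ and $\underline{N_2}\le N_2(t)\le \bar N_2$ for all $t\ge0$, so that the constants in Assumption~\ref{assump:rates} control the dynamics.

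\textbf{Step 1: invariant box.} Integrating the $\no$ equation gives
\[
\dot N_1=\int_0^{+\infty} b\,\no-\int_0^{+\infty}\ks\,\no-(\CtotNS N_1+\CtotS N_2)N_1\le (\|b\|_\infty-\underline{k}-\CtotNS N_1)N_1 .
\]
A comparison argument (Lemma~\ref{lemma:eq_diff}) then yields $N_1\le \bar N_1$. Using this in the $N_2$ equation and Assumption~\ref{assump:rates}\eqref{assump:death}, $\dot N_2\le \|k\|_\infty\bar N_1+\CSNS\bar N_1^2-(\kd-\CSS\bar N_1)N_2$, which gives $N_2\le\bar N_2$.

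For the lower bound on $N_1$, I test the $\no$ equation against the adjoint eigenfunction $\phi^0_1$ associated with $\lambda_0$ in \eqref{eq:lambda}. Writing $N_{1,0}=\int\phi^0_1\no$, I get
\[
\dot N_{1,0}=(\lambda_0-\CtotNS N_1-\CtotS N_2)N_{1,0}.
\]
By \eqref{assump:M}, $\CtotS N_2\le \lambda_0/M$, so $N_{1,0}$ increases whenever $N_1$ is small. With the bounds on $\phi^0_1$ (the quantity $\|b\|_\infty-\underline k$ entering its upper bound), this gives $N_1\ge\underline{N_1}$, at least after a transient. A positive lower bound $\underline{N_2}$ then follows from $\dot N_2\ge \underline{k}N_1-(\kd+\CDSNS\bar N_1+\CDSS N_2)N_2$.

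\textbf{Step 2: renormalisation and reduction.} Set $\Lambda(t)=\CtotNS N_1+\CtotS N_2$ and $m(t)=N_{1,0}(t)$. Write
\[
\no(t,a)=m(t)\,\nu(t,a),\qquad\text{where } \nu \text{ solves } \partial_t\nu+\partial_a\nu=-(\ks+\lambda_0)\nu,\quad \nu(t,0)=\int_0^{+\infty} b\,\nu .
\]
This is exact: the competition term is constant in $a$, so it factors out into the scalar $m$, and $\dot m=(\lambda_0-\Lambda)m$. By Theorem~\ref{thm:asymp_linear_b} (its phase-1 version, with $\tilde b\equiv0$), $\nu(t,\cdot)\to n^0_1$ exponentially in weighted $L^1$, with rate $\mu$. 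Hence $\int\nu$ and $\int\ks\nu$ converge exponentially to $\int n_1^0$ and $\int \ks n^0_1$, and the problem reduces to a nonautonomous planar ODE in $(m,N_2)$ that is an exponentially decaying perturbation of the autonomous system
\[
\dot m=(\lambda_0-\CtotNS c_1 m-\CtotS N_2)m,\qquad \dot N_2=c_k m+(\CSNS c_1 m+\CSS N_2)c_1 m-(\kd+\CDSNS c_1m+\CDSS N_2)N_2 ,
\]
whose unique positive equilibrium corresponds to $(N_1^*,N_2^*)$ by Theorem~\ref{thm:unique_2S}.

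\textbf{Step 3: Lyapunov function (main obstacle).} I take
\[
V(m,N_2)=\alpha\,\bigl(m-m^*-m^*\log(m/m^*)\bigr)+\tfrac12 (N_2-N_2^*)^2 .
\]
Its derivative along the limit system is a quadratic form in $(m-m^*,\,N_2-N_2^*)$ with coefficients depending on $(m,N_2)$ in the box. I choose $\alpha$ to cancel or dominate the cross terms. Negative definiteness should then be exactly what Assumption~\ref{assump:rates}\eqref{assump:ratio} and \eqref{assump:death2} encode: the cross coefficient involves $\|k\|_\infty+\CDSNS\bar N_2+\CSS N_2^*+\CSNS N_1^*$ compared against $\CtotNS\underline{N_1}/\CtotS$, and the $N_2$ diagonal term needs $\kd$ large. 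I expect the delicate part to be matching these bounds and handling the mismatch between $\int \ks\no$ and its limit, which I treat as an $L^1$-in-time decaying forcing. With $\dot V\le -c|{\cdot}|^2+Ce^{-\mu t}$, a Barbalat-type argument gives $(m,N_2)\to(m^*,N_2^*)$.

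\textbf{Step 4: conclusion.} Combining the convergence of $m$ with that of $\nu$ gives $\no\to\no^*$. Uniform convergence on bounded age intervals follows from the characteristics representation $\nu(t,a)=\nu(t-a,0)e^{-\int_0^a(\ks+\lambda_0)}$ for $t>a$, together with convergence of the boundary term $\int b\nu$.
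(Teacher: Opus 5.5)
Your architecture matches the paper's: renormalise phase~1 into a linear renewal problem, confine $(N_1,N_2)$ to a box, then run a Lyapunov argument on the population sizes with a vanishing forcing. However, Step~3 is the step that actually uses the theorem's hypotheses, and it is not carried out; with your functional it does not reduce to Assumption~\ref{assump:rates}.

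Along the limit system, set $u=m-m^*$, $y=N_2-N_2^*$ and $\theta=\int_0^{+\infty}n^0_1(a)\,\dd a$, so that $N_1=\theta m$. A direct computation gives
\[
\dot V=-\alpha\CtotNS\theta\,u^2+\bigl(\theta K(t)-\alpha\CtotS\bigr)uy-D(t)\,y^2,
\]
with $K(t)=\kappa_0+\CSNS(N_1+N_1^*)+(\CSS-\CDSNS)N_2^*$ and $D(t)=\kd+\CDSS(N_2+N_2^*)+(\CDSNS-\CSS)N_1$.

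The $u^2$ coefficient is the constant $-\alpha\CtotNS\theta$. It is not the $N_1(t)$-dependent damping that \eqref{assump:ratio} controls through $\underline{N_1}$. Moreover, $K(t)$ moves with $N_1(t)$ whenever $\CSNS>0$, so no constant $\alpha$ cancels the cross term. What you need is a uniform discriminant condition $\bigl(\theta K(t)-\alpha\CtotS\bigr)^2<4\alpha\CtotNS\theta D(t)$ for some $\alpha>0$. That is a different inequality from \eqref{assump:ratio}--\eqref{assump:death2}, and nothing in your proposal shows the latter imply it. Since Assumption~\ref{assump:rates} is the theorem's only restriction on the nonlinearity, this is the heart of the proof, not a matching detail.

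The paper instead uses $\frac12(N_1-N_1^*)^2+\frac{\CtotS}{2\CDSNS}(N_2-N_2^*)^2$, which works as follows:
\begin{itemize}
\item The $(N_1-N_1^*)^2$ coefficient is $\lambda_0-\CtotNS(N_1+N_1^*)-\CtotS N_2=\CtotS(N_2^*-N_2)-\CtotNS N_1$. This is where the lower bound $N_1\ge\underline{N_1}$ of Proposition~\ref{prop:bound_Pop_2ize} enters.
\item The cross term is absorbed into $\frac{|C(t)|}{2}\bigl((N_1-N_1^*)^2+(N_2-N_2^*)^2\bigr)$.
\item Conditions \eqref{assump:ratio} and \eqref{assump:death2} are designed to make both resulting diagonal coefficients uniformly negative, after which Lemma~\ref{lemma:eq_diff_0} concludes.
\end{itemize}
So either adopt that functional, or prove that Assumption~\ref{assump:rates} yields your discriminant condition. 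Your Volterra choice is natural, since $\dot m=(\lambda_0-\Lambda)m$ holds exactly, and it may give a cleaner sufficient condition. But that would be a different theorem.

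Step~2 also rests on a result whose hypotheses you do not have. Theorem~\ref{thm:asymp_linear_b} requires $b\ge\mu\,\phi^0_1/\phi^0_1(0)$. This is not assumed, and it fails as soon as $b$ vanishes on an initial juvenile interval. With $\tilde b\equiv0$ its condition on $\phi^0_2\equiv0$ is degenerate as well. Moreover, its conclusion holds in $L^1(\phi^0_1\,\dd a)$, and $\phi^0_1$ vanishes beyond the support of $b$. It therefore gives neither $\int\nu\to\theta$ nor $\int\ks\nu\to\int\ks n^0_1$, which are exactly the quantities driving your reduced system.

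The paper obtains unweighted convergence in Lemma~\ref{lemma:conv_nns}. It uses the renewal theorem for $K(t)e^{-\lambda_0t}$ together with a tail estimate based on $\ks\ge\underline{k}$ (Assumption~\ref{assump:bounds_rates}). No rate is needed, because Lemma~\ref{lemma:eq_diff_0} only requires the forcing to tend to $0$, provided $V$ is comparable to the squared distance, which holds once $m$ stays in a compact subset of $(0,\infty)$.

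The same issue affects your lower bound in Step~1. To make $N_{1,0}$ grow when it is small, you need $N_1\lesssim N_{1,0}$, i.e.\ a positive lower bound on $\phi^0_1$, which you do not have. The bound can only be recovered after Step~2, via $N_1/m=\int\nu\to\theta>0$. That is essentially the paper's route through $\lambda(\no,t)\to\lambda_0$.
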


We (re)introduce the following notations : 
\begin{align}
    &\lambda(\no,t) := \int_0^{+\infty} (b(a)-\ks(a))\frac{\no(t,a)}{N_1(t)} \dd a \label{def:lambda}\\
    &\lambda_0 = \int_0^{+\infty} (b(a)-\ks(a))\frac{\no^*(a)}{N_1^*} \dd a = \lambda(\no^*, t) \label{def:lambda0}\\
    &\kappa(\no,t) := \int_0^{+\infty} \ks(a) \frac{\no(t,a)}{N_1(t)} \dd a \label{def:kappa}\\
    & I := \int_0^{+\infty} e^{-\int_0^{a}\ks(u) + \lambda_0 \dd u}\dd a\\
    &\kappa_0 := \int_0^{+\infty} \ks(a) \frac{\no^*(a)}{N_1^*} \dd a = \kappa(n_1^*,t) = \frac{1}{I}-\lambda_0 \label{def:kappa0}. 
\end{align}

Where $ \lambda_0 $ is the unique solution to $\int_0^{+\infty} b(a) e^{-\int_0^{a} \ks(u) + \lambda_0 \dd u } \dd a = 1$, as defined by equation \eqref{eq:lambda}, for which the existence under Assumptions \ref{assump:bio} and \ref{ass:for_lambda_0} is given by Proposition \ref{prop:ex_phi}. Equality \eqref{def:lambda0} is shown in the proof of Theorem \ref{thm:unique_2S}.

We first present an intermediate convergence result on the population density in phase~1, $\no$. We study a renormalisation of $\no$ which verifies a linear PDE and converges to a stable age distribution to conclude that $\no$ also converges to a stable age distribution.

\begin{lemma}\label{lemma:conv_nns}
Suppose Assumptions \ref{assump:bio}, \ref{assump:bounds},\ref{ass:for_lambda_0},  \ref{assump:positive_compet} and  \ref{assump:bounds_rates} are verified and $\CtotNS  >0$. 
Let $(\no,N_2) \in  C(\R_+; L^1(\R_+))\times C(\R_+)$ be a positive solution of \eqref{eq:syst_linear_PDEODE} and $N_1$ be defined by \eqref{eq:def_N}. We have, $\forall a \geq 0$, 
$ \frac{\no(t,a)}{N_1(t)} \xrightarrow[t \to + \infty]{}\frac{\no^*(a)}{N_1^*}$ and the convergence is uniform in $a$ on bounded intervals. In particular, $\lambda(\no,t) \xrightarrow[t \to + \infty]{} \lambda_0$ and $\kappa(\no,t) \xrightarrow[t \to + \infty]{} \kappa_0$ with $\kappa, \lambda, \kappa_0$ and $\lambda_0$ defined by equations \eqref{def:lambda} to \eqref{def:kappa0}.
\end{lemma}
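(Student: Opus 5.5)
The idea is to remove the nonlinearity by a time-dependent renormalisation. In \eqref{eq:syst_linear_PDEODE} the competition term $\CtotNS N_1(t)+\CtotS N_2(t)$ depends only on $t$, not on $a$. So I set
\begin{equation*}
m(t,a) := \no(t,a)\, e^{\int_0^t (\CtotNS N_1(s)+\CtotS N_2(s))\dd s},
\end{equation*}
and a direct computation (in the weak sense of Definition \ref{def:weak_2ol}, using uniqueness from Theorem \ref{thm:exis_lin}) shows that $m$ solves the linear renewal equation
\begin{equation*}
\partial_t m + \partial_a m = -\ks(a) m, \qquad m(t,0)=\int_0^{+\infty} b(a) m(t,a)\dd a, \qquad m(0,\cdot)=n_{1,0}.
\end{equation*}
The multiplicative factor is the same for every $a$, so it cancels in the ratio and
\begin{equation*}
\frac{\no(t,a)}{N_1(t)} = \frac{m(t,a)}{\int_0^{+\infty} m(t,u)\dd u}.
\end{equation*}
The claim is therefore reduced to a statement about a linear McKendrick--von Foerster equation with no coupling to $N_2$. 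It is exactly Theorem \ref{thm:asymp_linear_b} with $\tilde b\equiv 0$ and no phase~2.

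Next I apply the GRE estimate to the one-phase linear problem. Its Malthusian parameter is the $\lambda_0$ of \eqref{eq:lambda}, which is positive by Assumption \ref{ass:for_lambda_0} and Proposition \ref{prop:ex_phi}. The direct eigenfunction is $N(a)=e^{-\int_0^a(\ks+\lambda_0)}$, proportional to $\no^*$ by Proposition \ref{prop:exp_ss}. The adjoint eigenfunction is
\begin{equation*}
\phi(a)=e^{\int_0^a(\ks+\lambda_0)}\int_a^{+\infty} b(u)e^{-\int_0^u(\ks+\lambda_0)}\dd u,
\end{equation*}
which is bounded by $\|b\|_\infty/(\lambda_0+\underline{k})$. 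Assumption \ref{assump:bounds_rates} enters here: it makes $\ks+\lambda_0\geq \underline{k}$, so $e^{-\int_0^a(\ks+\lambda_0)}$ decays exponentially, uniformly in the data. With $h(t,a)=e^{-\lambda_0 t}m(t,a)-m^0 N(a)$ and $m^0=\int n_{1,0}\phi$, the GRE inequality gives that $\int |h(t,a)|\phi(a)\dd a$ is nonincreasing. I want it to tend to $0$. If I may assume a Doeblin-type lower bound $b\geq \mu\phi/\phi(0)$, as in Theorem \ref{thm:asymp_linear_b}, I get exponential decay directly. Otherwise I will use the classical renewal argument. The birth rate $B(t)=m(t,0)$ satisfies a Volterra renewal equation, and $e^{-\lambda_0 t}B(t)\to m^0/\int N$ by Feller's renewal theorem. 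Then the characteristics formula $m(t,a)=B(t-a)e^{-\int_0^a\ks}$ for $a<t$ gives $e^{-\lambda_0 t}m(t,a)\to m^0 N(a)$ uniformly for $a$ in compact sets. The part $a\geq t$, which carries the initial datum, is bounded by $e^{-\lambda_0 t}\|n_{1,0}\|_\infty$-type terms, or in $L^1$ by $e^{-(\lambda_0+\underline{k})t}\|n_{1,0}\|_1$. Positivity $m^0>0$ follows from $N_1(0)>0$ and $\phi>0$ on the support of $b$.

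Then I pass to the ratio. The weighted-$L^1$ convergence gives $e^{-\lambda_0 t}\int m\to m^0\int N$, using that $\phi$ is bounded below on compacts. The tails are controlled by exponential decay through $\underline{k}$. I take the ratio of the pointwise limit to this integral limit, and the normalisation $N/\int N=\no^*/N_1^*$ follows from Proposition \ref{prop:exp_ss}. Finally, $\lambda(\no,t)$ and $\kappa(\no,t)$ are integrals of the bounded functions $b-\ks$ and $\ks$ (Assumption \ref{assump:bounds}) against the normalised density. Since $\no/N_1\to \no^*/N_1^*$ in $L^1$, which is stronger than pointwise, I get $\lambda(\no,t)\to\lambda_0$ and $\kappa(\no,t)\to\kappa_0$ by \eqref{def:lambda0} and \eqref{def:kappa0}.

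The main obstacle is the second step. It requires turning an $L^1(\phi)$ or renewal-theorem convergence into pointwise convergence uniform on bounded $a$-intervals without the lower bound on $b$ of Theorem \ref{thm:asymp_linear_b}. The renewal theorem needs the kernel $b(a)e^{-\int_0^a\ks}$ to be non-lattice, which holds since it is a density. The uniformity on bounded intervals comes from the characteristics representation. A lesser issue is justifying the renormalisation for weak solutions; I will handle it by passing through the explicit characteristics formula.
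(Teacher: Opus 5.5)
Your proposal is correct and follows essentially the same route as the paper: the exponential renormalisation to the linear renewal equation, the key renewal theorem for the birth rate (the paper cites Athreya's Theorem~2 where you cite Feller), the characteristics formula for uniform convergence on bounded $a$-intervals, and an exponential tail bound through $\underline{k}$ to get convergence of $e^{-\lambda_0 t}\int m$, hence of the ratio and of $\lambda(\no,t)$, $\kappa(\no,t)$; the GRE branch is not needed. There are two small slips, neither of which affects the argument. First, $L^1$ convergence is not ``stronger than pointwise''; you get it from the pointwise limit together with the domination by $Ce^{-(\lambda_0+\underline{k})a}$, which your argument supplies. Second, $m^0>0$ requires $n_{1,0}$ to charge $[0,\sup\operatorname{supp} b)$, not just $N_1(0)>0$, and the paper uses this nondegeneracy tacitly too when it divides by $c$.
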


\begin{proof}
This proof is inspired by the proof of Proposition 3.3 in \cite{meleard_roget_bd} and Appendix A in \cite{Tran2006ModlesPS}.
We introduce 
\begin{equation*}
    v_1(t,a) = \no(t,a) e^{\CtotNS  \int_0^t N_1(u) \dd u + \CtotS  \int_0^t N_2(u) \dd u}.
\end{equation*}
Defined as such, $v_1$ is a solution of the following linear renewal system
\begin{equation}\label{eq:v_ns}
    \begin{cases}
        &\frac{\partial v_1(t,a)}{\partial a} + \frac{\partial v_1(t,a)}{\partial a} = -\ks(a)v_1(t,a)\\
        &v_1(0,t) = \int_0^{+\infty}b(a) v_1(t,a) \dd a\\
        & v_1(a,0) = v_1^0(a) \in L_1(\R_+). 
    \end{cases}
\end{equation}
This system has been extensively studied in the litterature (see e.g. \cite{ Iannelli_17,Perthame2007_transport, pruss1981equilibrium,webb1985theory}) and it can be rewritten as the following system
\begin{align*}
\begin{cases}
    &v_1(t,a) = K(t-a) e^{-\int_0^{a} \ks(u) \dd u}\\
    &K(t) =  \one_{t\geq 0}\int_0^{t} b(a)K(t-a) e^{-\int_0^{a} \ks(u) \dd u} \dd a + \one_{t<0} \int_0^{+\infty}b(a+t) n_{1,0}(a)e^{-\int_{a+t}^{a} \ks(u) \dd u}  \dd a.
 \end{cases}
\end{align*}
We set $K_0(t) = \one_{\{t\geq 0\}} \int_0^{+\infty}b(a+t) n_{1,0}(a)e^{-\int_a^{a+t} \ks(u) \dd u}  \dd a$. By Theorem 2 in \cite{athreya2004branching}, we have, 
\begin{equation*}
    K(t)e^{-\lambda_0 t} \xrightarrow[t \to \infty]{} \frac{\int_0^{+\infty} K_0(u) e^{-\lambda_0 u} \dd u}{\int_0^{+\infty} a b(a) e^{-\lambda_0 a -\int_0^{a}\ks(u) \dd u} \dd a} =: c. 
\end{equation*}
Hence, for any fixed $a >0$,  and as $n_1^*(a) = n_1^*(0) e^{-\int_0^{a} (\ks(u) + \lambda_0) \dd u}$ by Proposition \ref{prop:exp_ss}, 
\begin{align*}
    e^{-\lambda_0 t}v_1(t,a) = e^{-\lambda_0 (t-a)}K(t-a) e^{-\lambda_0 a}e^{-\int_0^{a} \ks(u) \dd u} \xrightarrow[t \to \infty]{} c e^{-\lambda_0 a}e^{-\int_0^{a} \ks(u) \dd u} = cn^*_1(a) 
\end{align*}
 uniformly in $a$ on bounded intervals.
 
Le us now show that $||v_1(t,.) e^{-\lambda_0 t}||_1 \to c N_1^*$. In order to prove that, we first show that $\int_A^{+\infty} v_1(t,a) \dd a$ converges to $0$ when $A \to +\infty$ uniformly in $t$ for $t$ large enough. 

By Theorem 3.1 in \cite{Perthame2007_transport}, there exists a constant $C$ such that 
\begin{equation}
    \forall t \geq 0, \int_0^{+\infty} v_1(t,a) \dd a \leq C e^{\lambda_0 t}. \label{eq:unif_N}
\end{equation}

Furthermore, by the method of characteristics, we can write 
\begin{align}
    v_1(t,a) = \one_{t \geq a} v_1(0,t-a) e^{-\int_0^{a} \ks(u) \dd u} + \one_{t < a} v_1^0(a-t) e^{-\int_{a-t}^{a} \ks(u) \dd u} \label{eq:charac}. 
\end{align}
Let $\varepsilon > 0$. For $A > 0$ to be fixed later and $t \geq A$. Using \eqref{eq:charac}, \eqref{eq:unif_N} and Assumptions \ref{assump:bounds} and \ref{assump:bounds_rates}, we have
\begin{align*}
    &\int_A^{+\infty} v_1(t,a) \dd a = \int_A^{t}  v_1(0,t-a) e^{-\int_0^{a} \ks(u) \dd u} \dd a +  \int_t^{+\infty} v_1^0(a-t) e^{-\int_{a-t}^{a} \ks(u) \dd u} \dd a \\
    &\leq \int_A^{t}  \int_0^{+\infty} b(u) v_1(u,t-a) \dd u \cdot e^{-\int_0^{a} \ks(u) \dd u} \dd a + e^{-\underline{k} t}||v_1^0||_1 \\
    & \leq ||b||_{\infty} C \int_A^{t}  e^{\lambda_0 (t-a)}  e^{-\int_0^{a} \ks(u) \dd u} \dd a + e^{-\underline{k} t}||v_1^0||_1 \\
    &\leq ||b||_{\infty} e^{\lambda_0 t} C \int_A^{+\infty}  e^{-\lambda_0  a}  e^{-\int_0^{a} \ks(u) \dd u} \dd a  + e^{-\underline{k} t}||v_1^0||_1 \\
    & \leq \varepsilon e^{\lambda_0 t} + e^{-\underline{k} t}||v_1^0||_1, 
\end{align*}
for $A$ large enough. 
Hence, we can choose $t_0 > 0$ large enough such that for $t \geq t_0$
\begin{align*}
    \int_A^{+\infty} e^{-\lambda_0 t} v_1(t,a) \dd a \leq \varepsilon  + e^{-(\underline{k} +\lambda_0)t}||v_1^0||_1 \leq \varepsilon  + e^{-(\underline{k} +\lambda_0)t_0}||v_1^0||_1 \leq 2 \varepsilon. 
\end{align*}
Since the convergence of $e^{-\lambda_0 t} v_1(t,a)$ is uniform in $a$ on bounded intervals, we have for $A$ large enough and  $t \geq t_0$, 
 \begin{align*}
     \Big|\int_0^{+\infty} e^{-\lambda_0 t} v_1(t,a) \dd a - c \int_0^{+\infty} n^*_1(a) \dd a \Big|  &\leq \Big|\int_0^{A} e^{-\lambda_0 t} v_1(t,a) \dd a - c \int_0^{A} n^*_1(a) \dd a \Big| + 3 \varepsilon\\
     & \leq 4 \varepsilon. 
 \end{align*}
Hence $\int_0^{+\infty} e^{-\lambda_0 t} v_1(t,a) \dd a \to c N_1^*$.  Thus, we have, 
\begin{align}
    \frac{\no(t,a)}{N_1(t)} &= \frac{ e^{-\CtotNS  \int_0^t N_1(u) \dd u - \CtotS  \int_0^t N_2(u) \dd u} v_1(t,a)}{ e^{-\CtotNS  \int_0^t N_1(u) \dd u - \CtotS  \int_0^t N_2(u) \dd u} \int_0^{+\infty}v_1(u,t) \dd u} \nonumber\\
    & = \frac{e^{-\lambda_0 t} v_1(t,a)}{e^{-\lambda_0 t} \int_0^{+\infty}v_1(u,t) \dd u} \xrightarrow[t \to + \infty]{} \frac{\no^*(a)}{N_1^*}. \label{eq:conv_NS_ratio}
\end{align}
Similarly, since $b$ is bounded and $\ks$ positive by Assumptions \ref{assump:bio} and \ref{assump:bounds}, we have for $A$ large enough and some $t_0 > 0$ such that for $t \geq t_0$
\begin{align*}
   \Big| \int_A^{+\infty} e^{-\lambda_0 t} (b(a)-k(a)) v_1(t,a) \dd a \Big| \leq ||b||_{\infty}  \int_A^{+\infty} e^{-\lambda_0 t} v_1(t,a) \dd a \leq 2 \varepsilon. 
\end{align*}
Hence $\int_0^{+\infty} e^{-\lambda_0 t} (b(a)-k(a)) v_1(t,a) \dd a  \xrightarrow[t \to + \infty]{} c \int_0^{+\infty} (b(a)-k(a)) n^*_1(a) \dd a $. Since $\ks$ is bounded by Assumption \ref{assump:bio}, we also have $  \int_0^{+\infty} e^{-\lambda_0 t} k(a) v_1(t,a) \dd a  \xrightarrow[t \to + \infty]{} c \int_0^{+\infty}k(a) n^*_1(a) \dd a $. And finally, 
\begin{align*}
    \lambda(n_1,t) = \frac{\int_0^{+\infty} (b(a)-k(a)) n_1(t,a) \dd a }{\int_0^{+\infty}  n_1(t,a) \dd a} = \frac{\int_0^{+\infty} e^{-\lambda_0 t} (b(a)-k(a)) v_1(t,a) \dd a  }{\int_0^{+\infty} e^{-\lambda_0 t} v_1(t,a) \dd a } \xrightarrow[t \to + \infty]{} \lambda_0.
\end{align*}
Thus $\lambda(\no,t) \xrightarrow[t \to + \infty]{} \lambda_0$ and similarly, $\kappa(\no,t) \xrightarrow[t \to + \infty]{} \kappa_0$, where $\lambda, \kappa, \lambda_0$ and $\kappa_0$ are defined in \eqref{def:lambda} to \eqref{def:kappa0}. 
\end{proof}

The proof strategy deployed above to prove Lemma \ref{lemma:conv_nns} relies heavily on the fact that the equation on $\no$ can be rescaled with an exponential to an autonomous equation, which only works if $\tilde{b} = 0$.

We now state and prove a boundedness result. Indeed, the lower bound presented in Proposition \ref{prop:bound_wcompker} uses the assumption that $\tilde{b} >0$, which is not the case for system \eqref{eq:syst_linear_PDEODE}. We explicit the lower and upper bounds for the population sizes, which is useful to prove the convergence later on. 

\begin{prop}\label{prop:bound_Pop_2ize}
     Suppose Assumptions \ref{assump:bio}, \ref{assump:bounds}, \ref{ass:for_lambda_0}, \ref{assump:positive_compet} and  \ref{assump:bounds_rates} are verified and $\CtotNS  >0$. 
    Let  $(\no, N_2) \in C(\R_+; L^1(\R_+))\times C(\R_+)$ be a positive solution of \eqref{eq:syst_linear_PDEODE} with $N_1$ defined by \eqref{eq:def_N}. Then
    \begin{equation*}
      \sup_{t \geq 0}(N_1(t)) \leq \max\Big(N_1(0), \frac{||b||_{\infty}- \underline{k}}{\CtotNS }\Big) = \bar{N_1}. 
    \end{equation*}
    Under the further condition that Assumption \ref{assump:rates} \eqref{assump:death} is verified, we have 
    \begin{align*}
        \sup_{t \geq 0}(N_2(t)) \leq \max\Big(N_2(0), \frac{||k||_{\infty} + \CSNS\bar{N_1}^2 }{\kd - \CSS \bar{N_1}}\Big) = \bar{N}_2.
    \end{align*}
    Finally, if Assumption \ref{assump:rates} \eqref{assump:M} is also verified, we have for some $t_0 \geq 0$ and $M$ defined in \ref{assump:rates} \eqref{assump:M},  
    \begin{equation*}
         \inf_{t \geq t_0}(N_1(t)) \geq  \min\Big( \frac{\lambda_0}{M(||b||_{\infty} - \underline{\ks})} , \frac{\lambda_0/M -\CtotS \bar{N_2}}{\CtotNS}\Big)  = \underline{N_1} > 0.  
    \end{equation*}
\end{prop}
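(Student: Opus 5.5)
We work throughout with the ODEs satisfied by $N_1$ and $N_2$, obtained by integrating \eqref{eq:syst_linear_PDEODE} in age; positivity of $(\no,N_2)$ is given by Proposition \ref{prop:positivity}. Integrating the transport equation for $\no$ and using the renewal condition gives
\begin{equation*}
\frac{\dd N_1}{\dd t} = \int_0^{+\infty}(b(a)-\ks(a))\no(t,a)\dd a - (\CtotNS N_1+\CtotS N_2)N_1 = \big(\lambda(\no,t) - \CtotNS N_1 - \CtotS N_2\big)N_1 .
\end{equation*}
By Assumptions \ref{assump:bounds} and \ref{assump:bounds_rates}, $\lambda(\no,t)\le \|b\|_\infty-\underline{k}$. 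Since $N_2\ge 0$, we get $\dot N_1 \le (\|b\|_\infty-\underline k-\CtotNS N_1)N_1$. A standard comparison argument (Lemma \ref{lemma:eq_diff}, or directly: whenever $N_1$ exceeds $(\|b\|_\infty-\underline k)/\CtotNS$ it is decreasing) then gives $N_1(t)\le\bar N_1$ for all $t$.

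For $N_2$ we use the ODE in \eqref{eq:syst_linear_PDEODE}. We drop the negative terms $-\CDSNS N_1N_2-\CDSS N_2^2$ and bound
\begin{equation*}
\int_0^{+\infty}(\ks+\CSNS N_1+\CSS N_2)\no\,\dd a \le \|\ks\|_\infty \bar N_1+\CSNS\bar N_1^2+\CSS\bar N_1 N_2 .
\end{equation*}
This yields $\dot N_2\le \|\ks\|_\infty\bar N_1+\CSNS\bar N_1^2-(\kd-\CSS\bar N_1)N_2$. By Assumption \ref{assump:rates}\eqref{assump:death} the coefficient $\kd-\CSS\bar N_1$ is positive; we intend to read \eqref{assump:death} as ensuring this, and possibly we need strictness. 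A linear comparison then bounds $N_2$ by the max of $N_2(0)$ and the equilibrium level. The stated constant has $\|\ks\|_\infty$ rather than $\|\ks\|_\infty\bar N_1$. I would follow the paper's formula, flag the factor, and if needed bound $\int \ks\no\le\|\ks\|_\infty N_1$ consistently. This step is routine.

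The lower bound is the main step and uses Lemma \ref{lemma:conv_nns}: $\lambda(\no,t)\to\lambda_0$. Fix $M>1$ from \eqref{assump:M}. There is $t_0$ with $\lambda(\no,t)\ge\lambda_0(1-\tfrac{1}{2}(1-1/M))$, say $\lambda(\no,t)\ge \lambda_0/M$ with some margin, for $t\ge t_0$. Then for $t\ge t_0$,
\begin{equation*}
\dot N_1\ge\big(\lambda_0/M-\CtotS\bar N_2-\CtotNS N_1\big)N_1 ,
\end{equation*}
and \eqref{assump:M} makes $\lambda_0/M-\CtotS\bar N_2>0$. The comparison lemma gives $N_1(t)\ge\min\big(N_1(t_0),(\lambda_0/M-\CtotS\bar N_2)/\CtotNS\big)$ for $t\ge t_0$. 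Note that $N_1(t_0)>0$, since $N_1$ satisfies $\dot N_1\ge -(\|\ks\|_\infty+\CtotNS\bar N_1+\CtotS\bar N_2)N_1$ and cannot vanish in finite time if $N_1(0)>0$.

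The obstacle is replacing $N_1(t_0)$ by the explicit term $\lambda_0/(M(\|b\|_\infty-\underline k))$. To handle it, I would choose $t_0$ larger and argue that $N_1$ cannot stay below that level forever. For $t\ge t_0$, whenever $N_1<(\lambda_0/M-\CtotS\bar N_2)/\CtotNS$ the derivative is strictly positive, so $N_1$ increases. Once $N_1$ enters the region above the min, it never leaves, because the right-hand side is nonnegative on the boundary. So after possibly enlarging $t_0$, $N_1\ge$ the second term, hence $\ge\underline N_1$. If the approach to that level is only asymptotic, I would lower the target by using $M$ slightly larger than required. The first expression in the min then serves only as a harmless extra lower cap, and the inequality $\inf_{t\ge t_0}N_1\ge\underline N_1$ follows.
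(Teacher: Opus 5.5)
Your two upper bounds follow the paper's proof: the same differential inequalities and the same comparison through Lemma \ref{lemma:eq_diff}. Both of your hesitations there are justified. The derivation gives $\|\ks\|_\infty\bar{N_1}+\CSNS\bar{N_1}^2$ in the numerator of $\bar N_2$, so the displayed constant is missing a factor $\bar{N_1}$ and can fail when $\bar{N_1}>1$. Also, what the argument actually uses is $\kd>\CSS\bar{N_1}$, whereas \eqref{assump:death} is written with $\CSNS$ and a non-strict inequality. The paper's proof has the same mismatches, so these are defects of the statement, not of your argument.

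For the lower bound, you agree with the paper up to $N_1(t)\ge\min\big(N_1(t_0),(\lambda_0/M-\CtotS\bar{N_2})/\CtotNS\big)$. The last step genuinely differs, and your route is the sound one.

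The paper removes $N_1(t_0)$ via $\lambda_0/M\le\lambda(\no,t_0)\le(\|b\|_\infty-\underline{k})N_1(t_0)$. That inequality does not hold: $\lambda(\no,t)$ is normalised by $N_1(t)$ and only satisfies $\lambda(\no,t)\le\|b\|_\infty-\underline{k}$. Moreover, the time $t_0$ from Lemma \ref{lemma:conv_nns} depends only on the profile of $n_{1,0}$, not its mass. Hence $N_1(t_0)$ can be arbitrarily small, and with that $t_0$ the bound can fail.

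Enlarging $t_0$, which ``for some $t_0$'' permits, repairs this. The margin you already built in makes your final hedge unnecessary. Take $M'\in(1,M)$ with $\lambda(\no,t)\ge\lambda_0/M'$ for $t\ge t_0$, and set $\ell:=(\lambda_0/M-\CtotS\bar{N_2})/\CtotNS$. As long as $N_1<\ell$ you have $\dot N_1\ge N_1(t_0)\,\lambda_0(1/M'-1/M)>0$. So $N_1$ reaches $\ell$ in finite time, and it never drops below $\ell$ afterwards because $\dot N_1>0$ at that level. There is no need to change $M$ in the final bound.

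This gives $\inf_{t\ge t_1}N_1(t)\ge\ell\ge\underline{N_1}$, so the first entry of the min is superfluous for the inequality. You still need that entry to be positive for the claim $\underline{N_1}>0$. That requires $\|b\|_\infty>\underline{k}$, which follows from Assumption \ref{ass:for_lambda_0} as in \eqref{eq:binf_k}; your proposal omits this check.
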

\begin{proof}

We start by proving the upper bound of $N_1$. By integrating the first equation of \eqref{eq:syst_linear_PDEODE} in age and performing an integration by parts, we have $\forall t \geq 0$,
\begin{align*}
    \frac{\dd N_1(t)}{\dd t} &= \int_0^{+\infty} b(a) \no(t,a) \dd a - \int_0^{+\infty} \ks(a)  \no(t,a) \dd a - (\CtotNS N_1(t) + \CtotS N_2(t)) N_1(t)\\
    & = \lambda(n_1,t) N_1(t) - (\CtotNS N_1(t) + \CtotS N_2(t)) N_1(t)\\
    & \leq ||b||_{\infty}N_1(t)  - (\underline{k} + \CtotNS N_1(t)) N_1(t), 
\end{align*}
with $\lambda(n_1,t)$ defined in equation \eqref{def:lambda} and using the positivity of $N_1$ and $N_2$. 
By Assumption \ref{ass:for_lambda_0}, we have 
\begin{equation}
    1 < \int_0^{+\infty} b(a) e^{-\int_0^a \ks(u) \dd u} \dd a \leq \frac{||b||_{\infty}}{\underline{\ks}}. \label{eq:binf_k}
\end{equation}
Hence for all $t \geq 0$   $||b||_\infty > \underline{k}$ and we have by Lemma \ref{lemma:eq_diff} in the Appendix, 
\begin{equation*}
    \sup_{t \geq 0}(N_1(t)) \leq \max\Big(N_1(0), \frac{||b||_{\infty}- \underline{k}}{\CtotNS }\Big) = \bar{N_1}.
\end{equation*}
Similarly for $N_2$, and using the fact that $N_1,N_2 \geq 0$, we have
\begin{align*}
    \frac{\dd N_2(t)}{\dd t} &= - (\kd + \CDSNS N_1(t)+\CDSS N_2(t) ) N_2(t)+ \int_0^{+\infty}(\ks(a) +\CSNS N_1(t) + \CSS N_2(t) )N_1(t,a) \dd a \\
    &\leq - \kd  N_2(t)+ (||k||_{\infty}   + \CSNS \bar{N_1} + \CSS N_2(t)) \bar{N}_1. 
\end{align*}
As by Assumption \ref{assump:rates} \eqref{assump:death}, $\kd > \CSS \bar{N_1}$, we obtain by Lemma \ref{lemma:eq_diff} in the Appendix,
\begin{align*}
    \sup_{t \geq 0}(N_2(t))  &\leq \max\Big(N_2(0), \frac{||k||_{\infty} + \CSNS\bar{N_1}^2 }{\kd - \CSS \bar{N_1}}\Big):= \bar{N}_2. 
\end{align*}
    By Lemma \ref{lemma:conv_nns}, we know that $\lambda(n_1,t) \xrightarrow[t \to \infty]{} \lambda_0$. By assumption, $\exists M >1, \CtotS \bar{N_2} < \frac{\lambda_0}{M}$. Since $M >1$, we can find $t_0 > 0$ such that $\forall t \geq t_0, \lambda(n_1,t) \geq \lambda_0/M $. For $t \geq t_0$, we have 
    \begin{align*}
        \frac{\dd N_1(t)}{\dd t} \geq N_1(t) \Big(\frac{\lambda_0}{M} - \CtotNS N_1(t)  - \CtotS \bar{N_2}\Big). 
    \end{align*}
  Since by Assumption  \ref{assump:rates} \eqref{assump:M}, $\bar{N_2} < \frac{\lambda_0}{\CtotS  M}$, the previous equation allows us to conclude with Lemma \ref{lemma:eq_diff} that for $t \geq t_0$, $N_1(t) \geq \min \Big( N_1(t_0), \frac{\lambda_0/M -\CtotS \bar{N_2}}{\CtotNS} \Big)$. \\
  As $0 < \frac{\lambda_0}{M} \leq \lambda(n_1,t_0) \leq (||b||_{\infty} - \underline{\ks}) N_1(t_0)$ and $||b||_\infty > \underline{k}$ by \eqref{eq:binf_k}, we have $N_1(t_0) >\frac{\lambda_0}{M(||b||_{\infty} - \underline{\ks})}  $.
  Thus finally, 
  \begin{equation*}
     \inf_{t\geq t_0} (N_1(t)) \geq \min \Big( \frac{\lambda_0}{M(||b||_{\infty} - \underline{\ks})}, \frac{\lambda_0/M -\CtotS \bar{N_2}}{\CtotNS} \Big).
  \end{equation*}
\end{proof}

We now present a result on the convergence of total population sizes, which assumes several bounds on the rates to ensure a Lyapunov-type convergence result.

\begin{prop}\label{prop:conv_pop_2ize}
Suppose the assumptions of Theorem \ref{thm:conv_global} are verified. Let $(\no,N_2) \in C(\R_+; L^1(\R_+))\times C(\R_+)$ be a positive solution of \eqref{eq:syst_linear_PDEODE} with $N_1$ defined by \eqref{eq:def_N}.
Then we have 
    \begin{align*}
     N_1(t) \xrightarrow[t \to \infty]{} N_1^* \text{ and } N_2(t) \xrightarrow[t \to \infty]{} N_2^*,
    \end{align*}
with $N_1^*$ and $N_2^*$ the population sizes at the unique strictly positive equilibrium by Theorem \ref{thm:unique_2S}. 
\end{prop}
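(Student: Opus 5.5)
The plan is to reduce the problem to a non-autonomous planar ODE for $(N_1,N_2)$ and to run a Lyapunov argument in which the non-autonomous part vanishes asymptotically by Lemma \ref{lemma:conv_nns}. Integrating the first equation of \eqref{eq:syst_linear_PDEODE} in age (as in the proof of Proposition \ref{prop:bound_Pop_2ize}) gives the closed system
\begin{align*}
&\frac{\dd N_1}{\dd t} = \big(\lambda(\no,t) - \CtotNS N_1 - \CtotS N_2\big) N_1,\\
&\frac{\dd N_2}{\dd t} = -(\kd + \CDSNS N_1 + \CDSS N_2)N_2 + \big(\kappa(\no,t) + \CSNS N_1 + \CSS N_2\big)N_1 .
\end{align*}
By Lemma \ref{lemma:conv_nns}, $\lambda(\no,t)\to\lambda_0$ and $\kappa(\no,t)\to\kappa_0$. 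By the proof of Theorem \ref{thm:unique_2S}, $(N_1^*,N_2^*)$ is the unique positive equilibrium of the limit autonomous system: it satisfies $\CtotNS N_1^*+\CtotS N_2^*=\lambda_0$ and the second equation of \eqref{eq:steady_pop}. By Proposition \ref{prop:bound_Pop_2ize}, for $t\ge t_0$ we have $\underline{N_1}\le N_1\le \bar N_1$ and $N_2\le \bar N_2$.

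I would also establish a lower bound $N_2\ge \underline{N_2}>0$ for large $t$, since this quantity appears in Assumption \ref{assump:rates} \eqref{assump:death2}. For $t\ge t_0$,
\begin{equation*}
\frac{\dd N_2}{\dd t}\ge \kappa(\no,t)\underline{N_1} - (\kd+\CDSNS\bar N_1+\CDSS N_2)N_2 .
\end{equation*}
Since $\kappa\to\kappa_0>0$ (because $\ks\ge\underline{k}$), Lemma \ref{lemma:eq_diff} then gives a positive lower bound.

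Next I would introduce the Lyapunov-type function
\begin{equation*}
V(t) = N_1 - N_1^* - N_1^*\log\frac{N_1}{N_1^*} + \frac{w}{2}(N_2-N_2^*)^2,
\end{equation*}
with a weight $w>0$ to be chosen, for instance $w=\CtotS/(\CDSNS\underline{N_1})$ or a similar expression dictated by \eqref{assump:ratio}. Writing $x=N_1-N_1^*$ and $y=N_2-N_2^*$, and using the equilibrium relations to subtract zero, I obtain
\begin{equation*}
\frac{\dd V}{\dd t} = -\CtotNS x^2 - \CtotS xy + x(\lambda-\lambda_0) + w y\Big[-(\kd+\CDSNS N_1+\CDSS(N_2+N_2^*))y + \big(\CSNS(N_1+N_1^*)+\CSS N_2 + \kappa_0 - \CDSNS N_2^*\big)x + \CSS N_1^* y\Big] + w y N_1(\kappa-\kappa_0).
\end{equation*}
I then bound the coefficients using $N_1\in[\underline{N_1},\bar N_1]$, $N_2\in[\underline{N_2},\bar N_2]$, $\kappa_0\le\|\ks\|_\infty$, and Young's inequality on the cross terms. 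The goal is to show that the quadratic form $Q(x,y)$ is bounded above by $-\delta(x^2+y^2)$ for some $\delta>0$. Conditions \eqref{assump:ratio} and \eqref{assump:death2} are exactly the kind of diagonal-dominance inequalities this requires. Condition \eqref{assump:ratio} controls the $xy$ coefficient relative to $\CtotNS$ after dividing by $w$, and \eqref{assump:death2} makes the $y^2$ coefficient sufficiently negative; the factors $\tfrac12$ in the assumptions suggest splitting each cross term evenly between the two squares. This matching of coefficients is the main obstacle. I expect to have to tune $w$ and the Young splitting, possibly replacing the $\log$ term by $\tfrac12 x^2$, until the exact constants in Assumption \ref{assump:rates} appear.

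Once I have
\begin{equation*}
\frac{\dd V}{\dd t}\le -\delta(x^2+y^2) + \varepsilon(t)(|x|+|y|),\qquad \varepsilon(t)=|\lambda-\lambda_0|+w\bar N_1|\kappa-\kappa_0|\to0,
\end{equation*}
I conclude as follows. The variables $x,y$ are bounded, so for any $r>0$ there is $T$ such that $\frac{\dd V}{\dd t}\le -\delta r^2/2<0$ whenever $t\ge T$ and $|(x,y)|\ge r$. Hence the trajectory enters the set $\{|(x,y)|<r\}$ and cannot leave a slightly larger sublevel set of $V$. Since $V$ is positive definite and comparable to $x^2+y^2$ on the bounded region $[\underline{N_1},\bar N_1]\times[0,\bar N_2]$, letting $r\to0$ yields $(N_1,N_2)\to(N_1^*,N_2^*)$.
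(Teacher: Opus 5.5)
Your reduction to the planar non-autonomous system, your formula for $\frac{\dd V}{\dd t}$, and your closing argument are sound. The closing argument is the strict decrease of $V$ away from the equilibrium once $\lambda(\no,t)-\lambda_0$ and $\kappa(\no,t)-\kappa_0$ are small, or equivalently Lemma \ref{lemma:eq_diff_0}.

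\textbf{The gap.} It is the step that actually carries the proposition. You never show that the hypotheses make the quadratic part of $\frac{\dd V}{\dd t}$ uniformly negative definite for $t\ge t_0$. You only announce that tuning $w$ and the Young splitting should make Assumption \ref{assump:rates} \eqref{assump:ratio}--\eqref{assump:death2} appear, and for your functional the coefficients do not line up with them.

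With the logarithmic term, the coefficients are:
\begin{itemize}
\item $x^2$: the constant $-\CtotNS$;
\item $xy$: $wE(t)-\CtotS$, with $E(t)=\CSNS(N_1+N_1^*)+\CSS N_2+\kappa_0-\CDSNS N_2^*$;
\item $y^2$: $wF(t)$, with $F(t)=-\kd-\CDSNS N_1-\CDSS(N_2+N_2^*)+\CSS N_1^*$.
\end{itemize}
An even splitting therefore requires $2\CtotNS>|wE(t)-\CtotS|$ and $2w|F(t)|>|wE(t)-\CtotS|$ uniformly on $[\underline{N_1},\bar N_1]\times[0,\bar N_2]$. Since $w$ is a constant, the term $w\CSNS N_1(t)$ is compared with the constant $\CtotNS$, not with something proportional to $N_1(t)$. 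With your suggested $w=\CtotS/(\CDSNS\underline{N_1})$ it contributes $\CSNS\bar N_1/(2\CDSNS\underline{N_1})$, whereas \eqref{assump:ratio} only contains $\CSNS/(2\CDSNS)$. So \eqref{assump:ratio} is not the condition your functional needs, and nothing in the proposal establishes the one it does need.

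\textbf{Where \eqref{assump:ratio} comes from.} Its form is tailored to the purely quadratic functional $\frac12(N_1-N_1^*)^2+\frac{\CtotS}{2\CDSNS}(N_2-N_2^*)^2$ used in the paper. There the $x^2$-coefficient is $\lambda_0-\CtotNS(N_1+N_1^*)-\CtotS N_2=-\CtotNS N_1(t)+\CtotS(N_2^*-N_2(t))$, which carries the factor $N_1(t)$. Dividing the cross contributions by $\CtotS N_1(t)$ and using $N_1\ge\underline{N_1}$ from Proposition \ref{prop:bound_Pop_2ize} is exactly what produces the $1/\underline{N_1}$ and the isolated $\CSNS/(2\CDSNS)$.

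The paper's argument runs as follows:
\begin{itemize}
\item write the derivative as $A(t)x^2+B(t)y^2+C(t)xy$ plus terms that vanish by Lemma \ref{lemma:conv_nns};
\item absorb $C(t)xy$ into $\frac{C(t)}{2}(x^2+y^2)$;
\item impose $A+\frac{C}{2}<-\alpha$ and $B+\frac{C}{2}<-\alpha$ for $t\ge t_0$;
\item conclude with Lemma \ref{lemma:eq_diff_0}.
\end{itemize}

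\textbf{What to do.} Commit to that functional (you already list it as a fallback) and carry out these bounds term by term. The exact cross coefficient contains negative contributions, e.g.\ $-\CtotS N_1^*$ from $(N_1-N_1^*)\frac{\dd N_1}{\dd t}$. So bound $C(t)xy$ by $\frac{|C(t)|}{2}(x^2+y^2)$ instead of relying on its sign.

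Your extra lower bound on $N_2$ is correct, and in fact $\kappa(\no,t)\ge\underline{k}$ for all $t$, so no limit is needed. It is also inessential: $N_2$ enters the $y^2$-coefficient with a favourable sign, so $N_2\ge 0$ already suffices there.
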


\begin{proof} 
We have $\forall t \geq 0, \forall n \in \mathcal{C}(\R_+; L^1(\R_+)) \geq 0, \lambda(n,t) \geq 0$, where $\lambda$ is defined by \eqref{def:lambda}. 
Using the fact that $(N_1^*, N_2^*)$ are the population sizes at equilibrium, and with $ \lambda_0 = \int_0^{+\infty} (b(a)-\ks(a))\frac{\no^*(a)}{N_1^*} \dd a = \lambda(\no^*, t)$, we have
\begin{align*}
    &\frac{1}{2}\frac{\dd (N_1(t) -N_1^*)^2}{\dd t } =  (N_1(t) -N_1^*) \frac{d(N_1(t) - N_1^*)}{\dd t} \\
    & = (N_1(t)\! -\!N_1^*)\Big(\lambda(n_1,t) N_1(t) \!-\!\CtotNS N_1(t)^2 \!-\! \CtotS N_1(t)N_2(t) \!-\! \lambda_0 N_1^* \!+\! \CtotNS (N_1^*)^2\! + \! \CtotS N_1^*N_2^*\Big)\\
    &= (N_1(t) - N_1^*)^2 \Big(\lambda_0 - \CtotNS (N_1(t) + N_1^*) - \CtotS N_2(t) \Big)  + N_1(t) (N_1(t) - N_1^*) (\lambda(n_1,t) - \lambda_0)  \\
    &\qquad +  (N_1(t) - N_1^*)\CtotS N_1^*(N_2(t)-N_2^*). 
\end{align*}

Similarly, with $\kappa$ and $\kappa_0$ defined in \eqref{def:kappa} and \eqref{def:kappa0}, we have
\begin{align*}
   \frac{1}{2} &\frac{\dd (N_2(t) - N_2^*)^2}{\dd t} = \Big(-\kd - \CDSNS  N_1^*  - \CDSS (N_2(t) + N_2^*) + \CSS N_1(t) \Big) (N_2(t) - N_2^*)^2 \\
    &+ \Big(\kappa(\no,t) + \CDSNS N_2(t) + \CSS N_2^* + \CSNS (N_1(t) + N_1^*)\Big) (N_2(t)-N_2^*) (N_1(t) - N_1^*)\\
    & + (\kappa(\no,t) - \kappa_0)N_1^*(N_2(t) - N_2^*) . 
\end{align*}
Adding both differential equations with a rescaling parameter on $\frac{\dd (N_2(t) - N_2^*)^2}{\dd t}$ yields
\begin{equation}\label{eq:diff_N2N1}
\begin{aligned}
    \frac{\CtotS}{2\CDSNS}\cdot &\frac{\dd (N_2(t) - N_2^*)^2}{\dd t} + \frac{1}{2} \frac{\dd (N_1(t) -N_1^*)^2}{\dd t }  = A(t)  (N_1(t) -N_1^*)^2 + B(t)(N_2(t) - N_2^*)^2 \\
    &+ C(t) (N_2(t)-N_2^*) (N_1(t) - N_1^*) +  \frac{\CtotS}{2\CDSNS}(\kappa(\no,t) - \kappa_0)N_1^*(N_2(t) - N_2^*) \\
    &+  N_1(t) (\lambda(n_1,t) - \lambda_0) (N_1(t) - N_1^*).
    \end{aligned}
\end{equation}
Where 
\begin{align*}
    &A(t) =  \lambda_0 - \CtotNS (N_1(t) + N_1^*) - \CtotS N_2(t) \\
    &B(t) = -\frac{\CtotS}{\CDSNS}\kd - N_1^* \CtotS - \frac{\CtotS}{\CDSNS}\CDSS (N_2(t) + N_2^*) + \frac{\CtotS}{\CDSNS}\CSS N_1(t) \\
    & C(t) = \CtotS N_1^* + \frac{\CtotS}{\CDSNS}\Big(\kappa(\no,t) + \CDSNS N_2(t) + \CSS N_2^* + \CSNS (N_1(t) + N_1^*)\Big). 
\end{align*}
As $C(t) \geq 0$, we can rewrite equation \eqref{eq:diff_N2N1} as follows 
\begin{equation}\label{eq:diff_N2N1}
\begin{aligned}
    &\frac{\CtotS}{2\CDSNS}\cdot \frac{\dd (N_2(t) - N_2^*)^2}{\dd t} + \frac{1}{2} \frac{\dd (N_1(t) -N_1^*)^2}{\dd t }  = (A(t)+\frac{C(t)}{2} ) (N_1(t) -N_1^*)^2 \\
    &+ (B(t) + \frac{C(t)}{2}) (N_2(t) - N_2^*)^2 - \frac{C(t)}{2} (N_2(t)-N_2^* +  N_1(t) - N_1^*)^2\\
    &+  \frac{\CtotS}{2\CDSNS} (\kappa(\no,t) - \kappa_0)N_1^*(N_2(t) - N_2^*) +  N_1(t) (\lambda(n_1,t) - \lambda_0) (N_1(t) - N_1^*).
    \end{aligned}
\end{equation}
As $N_1(t)$ and $N_2(t)$ are bounded and $\kappa(n_1,t)-\kappa_0 \to 0$ and $\lambda(n_1,t) - \lambda_0
\to 0$ by Lemma \ref{lemma:conv_nns}, the last two terms of equation \eqref{eq:diff_N2N1} go to $0$. 
Thus, if we can show that there is a positive constant $\alpha$ such that $A(t) + \frac{C(t)}{2} <- \alpha$ and $B(t) + \frac{C(t)}{2} < -\alpha$, the function $(N_1(t) - N_1^*)^2 + \frac{\CtotS}{2\CDSNS} (N_2(t) - N_2^*)^2$ will verify a differential inequality as in the conditions of Lemma \ref{lemma:eq_diff_0} which will allow us to conclude that $(N_1(t) - N_1^*)^2 + (N_2(t) - N_2^*)^2 \to 0 $ and thus that $N_1(t) \to N_1^*$ and $N_2(t) \to N_2^*$. 

We write the following
\begin{align*}
    &A(t) + \frac{C(t)}{2} = \lambda_0 - \CtotNS (N_1(t) + N_1^*) - \CtotS N_2(t) +  \frac{1}{2} \CtotS N_1^* \\
    &+ \frac{1}{2}\frac{\CtotS}{\CDSNS}\Big( \kappa(\no,t) + \CDSNS N_2(t) + \CSS N_2^* + \CSNS (N_1(t) + N_1^*)\Big)
\end{align*}

Since $\CtotNS  N_1^* + \CtotS  N_2^* = \lambda_0$, we have for $t \geq t_0$ where $t_0$ is such that $ \lambda(n_1,t) \geq \frac{\lambda_0}{M}$ for $t \geq t_0$, 
\begin{align*}
    &A(t) \!+ \!\frac{C(t)}{2} \! \leq \! \CtotS  N_2^* - \CtotNS N_1(t)  +  \frac{1}{2} \CtotS N_1^* + \frac{1}{2}\frac{\CtotS}{\CDSNS}\Big( ||\ks||_{\infty}+ \CDSNS N_2(t) + \CSS N_2^* + \CSNS (N_1(t) + N_1^*)\Big)
\end{align*}

Using the upper and lower bounds for $N_1$ and $N_2$ proved in Proposition \ref{prop:bound_Pop_2ize}, $A(t) + \frac{C(t)}{2}$ is strictly negative provided 
\begin{align*}
    \frac{\CtotNS}{\CtotS} \geq \frac{\frac{1}{2\CDSNS}\Big(||k||_{\infty}+ \CDSNS \bar{N_2} + \CSS N_2^* + \CSNS  N_1^*\Big) }{\underline{N_1}}+ \frac{1}{2\CDSNS} \CSNS.
\end{align*}
where $\bar{N_1}, \bar{N_2}, \underline{N_1}$ are defined in equations \eqref{eq:N1bar}, \eqref{eq:N2bar} and \eqref{eq:N1under}. The above inequality is verified by Assumption  \ref{assump:rates} \eqref{assump:ratio}.

On the other hand, we have 
\begin{align*}
    &\frac{\CDSNS}{\CtotS}\Big(B(t) + \frac{C(t)}{2}\Big) = -\kd - N_1^* \CDSNS - \CDSS (N_2(t) + N_2^*) +\CSS N_1(t)  \\
    & + \frac{1}{2} \CDSNS N_1^* + \frac{1}{2}\Big(\kappa(n_1, t)+ \CDSNS N_2(t) + \CSS N_2^* + \CSNS (N_1(t) + N_1^*)\Big) \\
    & \leq -\kd - \frac{1}{2}N_1^* \CDSNS - \CDSS (\underline{N_2} + N_2^*) +\CSS \bar{N_1}   + \frac{1}{2}\Big(||k||_{\infty} \bar{N_1}+\CDSNS \bar{N_2} + \CSS N_2^* + \CSNS (\bar{N_1}+ N_1^*)\Big). 
\end{align*}
The upper bound is strictly negative provided 
\begin{align*}
    \kd \geq \frac{1}{2}N_1^* \CDSNS + \CDSS (\underline{N_2} + N_2^*) -\CSS \bar{N_1}  - \frac{1}{2}\Big(||k||_{\infty} \bar{N_1}+ \CDSNS \bar{N_2} + \CSS N_2^* + \CSNS ( \bar{N_1} + N_1^*)\Big). 
\end{align*}
This is once again true by Assumption  \ref{assump:rates}\eqref{assump:death2}. 
We can therefore conclude with Lemma \ref{lemma:eq_diff_0}, and we have $N_1(t) \xrightarrow[t \to \infty]{} N_1^*$ and $N_2(t) \xrightarrow[t \to \infty]{} N_2^*$. 
\end{proof}

The proof of Theorem \ref{thm:conv_global} follows directly from the previous results. 

\begin{proof}[Proof of Theorem \ref{thm:conv_global}] 
Since $ \frac{\no(t,a)}{N_1(t)} \xrightarrow[t \to + \infty]{}\frac{\no^*(a)}{N_1^*}$ uniformly in $a$ on bounded intervals by Lemma \ref{lemma:conv_nns} and $N_1(t) \xrightarrow[t \to + \infty]{} N_1^* $ by Proposition \ref{prop:conv_pop_2ize}, we have the convergence of $\no$. The convergence of $N_2$ is given by Proposition \ref{prop:conv_pop_2ize}.

\end{proof}

\section*{Conclusion}
In this article, we study a novel system of two coupled age-structured partial differential equations inspired by the biological observation of two phase ageing \cite{rera_intestinal_2012}. One of the main novelties introduced is that the transition between both phases occurs at an age-dependent rate and is density dependent. 
Furthermore, the presence of non-linearities and the fact that both equations are highly coupled through transition, competition and birth terms imply several theoretical difficulties. Our main results are existence, uniqueness and local and/or global stability of steady states for two simplified systems. The first in an ODE system, which we study using tools from bifurcation theory and stability analysis. The second simplification is a PDE-ODE system. We study this system in a general setting using semi-group theory, and show local asymptotic stability of the strictly positive steady state. Under further assumptions, and by studying a renormalised equation and a Lyapunov-type functional, we show that this steady state attracts all trajectories, thus ruling out the presence of sustained oscillations. Notably, this result does not require the assumption of weak or infinite non-linearities. 

The uniqueness of steady states and absence of oscillations show that the proportion of individuals in each phase at equilibrium is a unique feature of the model. This is encouraging for ecological applications. Indeed, the experimental measure of such a proportion, which is realistically feasible in practice, could help gain some insight on the health of a wild population. Such insights could not be obtained otherwise as it is almost impossible to measure the chronological age of individuals in the wild.

\section*{Funding and acknowledgments}
The author would like to thank Marie Doumic, Sarah Kaakaï and Michaël Rera for their help and supervision and Benoit Perthame for his fruitful discussions. The author was funded by ANR project  ANR-25-CE45-5355. The author would like to thank the Chaire Modélisation Mathématique and Biodiversité Veolia Environnement - Ecole Polytechnique - Muséum National d’Histoire Naturelle - Fondation X.

\bibliographystyle{abbrv}
\bibliography{Biblio_these.bib}

\appendix
\section{Appendix}

\subsection{Proofs of Section \ref{sec:gen_lin}}
\label{sec:app_lin}
\begin{proof}[Proof of Theorem \ref{thm:exis_lin}]
    This proof is inspired by the proof of Theorem 3.1 in \cite{Perthame2007_transport}. 
    The existence result is obtained by using the Banach fixed point theorem in the Banach space $X^2$ where $X = C([0,T]; L^1(\R_+))$. We equip $X^2$ with the norm $||(n_1,n_2)||_{X^2} = \sup_{0\leq t \leq T} ||n_1(t)||_1 + \sup_{0\leq t \leq T} ||n_2(t)||_1$, where $T$ is chosen such that 
    \begin{align*}
        &T \max(||b||_{\infty},||\tilde{b}||_{\infty}) \leq \frac{1}{2} \quad  \text{and} \quad  T(||\ks||_{\infty}+ \sup_{0\leq s \leq T}(\CS(S_1(s), S_2(s))) \leq \frac{1}{4}.
    \end{align*}
    Where we use the boundedness assumption \ref{assump:bounds} for $b, \tilde{b}$ and $\ks$ as well as Assumption \ref{assump:compet_0} ($\CS \in L_{\text{loc}}^{\infty}(\R^2)$)  and the fact that $S_1$ and $S_2$ are locally bounded functions of time. 
    
    For $(m_1, m_2) \in X$, we define the operator $\mathcal{S}$ such that $(\no, \nt) =: \mathcal{S}[m_1,m_2]$ is defined by the following system
    \begin{align}
\label{syst:edp_lin_aux}
    \begin{cases}
        &\frac{\partial \no(t,a) }{\partial t} + \frac{\partial \no(t,a) }{\partial a} = -(\ks(a) + \CS(S_1(t),S_2(t)) + \gamo(S_1(t),S_2(t)))\no(t,a)\\
        &\no(0,t) = \int_0^{+\infty}(m_1(t,a)b(a)+m_2(t,a)\tilde{b}(a) )\dd a\\
        &\frac{\partial \nt(t,a) }{\partial t} + \frac{\partial \nt(t,a) }{\partial a} = -(\kd(a) +\gamt(S_1(t),S_2(t)) )\nt(t,a)\\
        &\nt(0,t) = \int_0^{+\infty}\no(t,a)(\ks(a)+\CS(S_1(t),S_2(t)))\dd a\\
        &\no(a,0) = n_{1,0}(a) \quad \nt(a,0) = n_{2,0}(a) \qquad n_{1,0}, n_{2,0} \in (L^{1}(\R_+))^2.
    \end{cases}
\end{align}
For two pairs of functions $(m_1, m_2)$ and $(\tilde{m}_1, \tilde{m}_2)$ in $X^2$ and $(\no, \nt), (\tilde{\no}, \tilde{\nt})$ the corresponding images by $\mathcal{S}$, we define $u_1 := \no - \tilde{\no}, u_2 := \nt - \tilde{\nt}$ and $v_1 := m_1 - \tilde{m}_1, v_2 := m_2 - \tilde{m}_2$ such that  
\begin{align*}
    \begin{cases}
        &\frac{\partial |u_1(t,a)| }{\partial t} + \frac{\partial |u_1(t,a)| }{\partial a} = -(\ks(a) + \CS(S_1(t),S_2(t)) + \gamo(S_1(t),S_2(t)))|u_1(t,a)|\\
        &|u_1(0,t)| = \Big|\int_0^{+\infty}(v_1(t,a)b(a)+v_2(t,a)\tilde{b}(a)) \dd a \Big|\\
        &\frac{\partial| u_2(t,a)| }{\partial t} + \frac{\partial |u_2(t,a)| }{\partial a} = -(\kd(a) +\gamt(S_1(t),S_2(t)) )|u_2(t,a)|\\
        &|u_2(0,t)| = \Big|\int_0^{+\infty}u_1(t,a)(\ks(a)+\CS(S_1(t),S_2(t)))\dd a \Big|\\
        &|u_1(a,0)| = 0 \quad |u_2(a,0)| = 0.
    \end{cases}
\end{align*}
The differential equation on the absolute value is obtained by multiplying the initial PDE by $\text{sign}(u_1)$, and stays true in a distributional sense, even though $|u_1|$ is not differentiable in the classical sense when $u_1 = 0$. As our estimates only require integral inequalities, a distributional equality is sufficient. 
By integrating the equation on $|u_1|$ on $\R_+$ in $a$ and on $[0,t]$ in time, we obtain 
\begin{align*}
    ||u_1(t)||_1 &\leq  \int_0^t|u_1(0,s)| \dd s \\
   & = \int_0^t  \Big|\int_0^{+\infty}(v_1(a,s)b(a)+v_2(a,s)\tilde{b}(a)) \dd a \Big| \dd s\\
   & \leq t (||b||_{\infty} \sup_{0\leq s \leq t}||v_1(s,.)||_1 + ||\tilde{b}||_{\infty} \sup_{0\leq s \leq t}||v_2(s,.)||_1)\\
   & \leq t \max(||b||_{\infty},||\tilde{b}||_{\infty}) \sup_{0\leq s \leq t}(||v_1(s,.)||_1  + ||v_2(s,.)||_1). 
\end{align*}
By taking the supremum for $0\leq t \leq T$, and by the choice of $T$, this yields 
\begin{equation*}
    ||u_1||_X \leq \frac{1}{2}( ||v_1||_X  + ||v_2||_X) . 
\end{equation*}
Similarly by integrating the equation on $|u_2|$,
\begin{align*}
    ||u_2(t)||_1 &\leq  \int_0^t|u_2(0,s)| \dd s \\
   & = \int_0^t  \Big|\int_0^{+\infty}u_1(a,s)(\ks(a)+\CS(S_1(t),S_2(t)) ) \dd a \Big| \dd s\\
   & \leq t (||\ks||_{\infty}+ \sup_{0\leq s \leq t}(\CS(S_1(s), S_2(s))) )\sup_{0\leq s \leq t}||u_1(s)||_1\\
   & \leq t (||\ks||_{\infty}\!+ \!\sup_{0\leq s \leq t}(\CS(S_1(s), S_2(s)))\max(||b||_{\infty},||\tilde{b}||_{\infty}) \!\sup_{0\leq s \leq t}(||v_1(s)||_X\! +\! ||v_2(s)||_X). 
\end{align*}
And by once again taking the supremum on $0 \leq t \leq T$, we obtain
\begin{equation*}
    ||u_2||_X \leq \frac{1}{4}( ||v_1||_X  + ||v_2||_X). 
\end{equation*}
Hence, we have 
\begin{equation*}
    ||(u_1,u_2)||_{X^2}\leq \frac{3}{4} ||(v_1,v_2||_{X^2}. 
\end{equation*}
Thus, the operator $\mathcal{S}$ is a strict contraction in the Banach space $X^2$ and the Banach fixed point theorem (see e.g. Theorem 1 in 9.2.1 in \cite{evans2010partial}) ensures that there is existence of a unique fixed point. As this proof can be iterated over intervals of the form $[iT, (i+1)T]$ for any integer $i$, this means that we can build a unique solution on $C(\R_+, L^1(\R_+))^2$.

Furthermore, for two initial condition sets $(n_{1,0}^1,n_{2,0}^1) $ and $(n_{1,0}^2,n_{2,0}^2) $ such that we have $n_{1,0}^1 ~\leq ~n_{1,0}^2$ and $n_{2,0}^1 ~\leq~ n_{2,0}^2$ and their respective operators $\mathcal{S}^1, \mathcal{S}^2$, we have for all $m_1, m_2$, $\mathcal{S}^1( m_1, m_2) ~\leq ~\mathcal{S}^2( m_1, m_2)$ (this can be easily seen by solving the equation verified by $\mathcal{S}^2( m_1, m_2) ~ - ~\mathcal{S}^1( m_1, m_2)$ and noticing that the solutions are always positive). Thus by iteration, this comparison principle is also verified by the solutions of \eqref{syst:edp_lin}. In particular, this ensures that they are always positive if the initial conditions are positive. 

Finally, the a priori bound \eqref{eq:exp_bound_L1} can be shown using the fact that 
\begin{align*}
    \frac{\partial }{\partial t }\int_0^{+\infty} |\no(t,a)| + |\nt(t,a)| \dd a &= \int_0^{+\infty} \Big(\frac{\partial }{\partial t }|\no(t,a)| + \frac{\partial }{\partial t }|\nt(t,a)|\Big) \dd a \\
    & \leq ||b||_{\infty}\int_0^{+\infty} |\no(t,a)| \dd a +  ||\tilde{b}||_{\infty}\int_0^{+\infty} |\nt(t,a)| \dd a 
\end{align*}
and concluding with Grönwall's Lemma. 
\end{proof}

\begin{proof}[Proof of Theorem \ref{thm:asymp_linear_b}]
    The proof is similar to the proof of Theorem 3.5 p.66 in \cite{Perthame2007_transport}, to which we refer the reader for more details. The result is shown by differentiating
    \begin{equation*}
        \int_0^{+\infty} \Big(|h_1(t,a)| \phi^0_1(a) + |h_2(t,a)| \phi^0_2(a)\Big) \dd a, 
    \end{equation*}
    and using Grönwall's Lemma. 
\end{proof}
\subsection{Proofs of Section \ref{sec:ex_un} }
\label{sec:app_ex}
\begin{proof}[Proof of Theorem \ref{them:ex} ]
This proof is inspired by the proof of Theorem 4 in \cite{Perthame_Tumulari2008} and relies once again on the Banach fixed point theorem (see \cite{evans2010partial}) as in the proof of Theorem \ref{thm:exis_lin}. We introduce $X^2 = C([0,T])^2$ for a positive time $T$ to be chosen later and $X_+$ the set of non-negative continuous functions on $[0,T]$. We equip $X_+^2$ with the distance $\mathcal{D}$ defined as 
$$\forall (u_1,u_2), (v_1,v_2) \in( X_+^2)^2\mathcal{D}((u_1,u_2), (v_1,v_2)) = \sup_{0\leq t \leq T}\Big(| u_1(t) - v_1(t) | +|u_2(t) - v_2(t) | \Big). $$

For some given functions $S_2(t), S_1(t) \in C([0,T])^2 \subset L^{\infty}_{\text{loc}}(\R_+)$, we have by Theorem \ref{thm:exis_lin} the existence of $(\no, \nt) \in C([0,T]; L^1(\R_+))^2$ solving \eqref{syst:edp} on $[0,T]$. 
Similarly as in the proof of Theorem \ref{thm:exis_lin}, this allows us to define the following map in $(X_+)^2\to (X_+)^2$,
    \begin{equation*}
       \mathcal{T}: (S_1(t),S_2(t)) \mapsto \Big(\int_0^{+\infty} \psi_1(a) \no(t,a) \dd a, \int_0^{+\infty} \psi_2(a) \nt(t,a) \dd a\Big).
    \end{equation*}
    We now show that  $\mathcal{T}$ is a contraction map, which will show the expected result by the Banach fixed point theorem. 
    Let us consider two solutions of \eqref{syst:edp}, $(\no,\nt)$ and $(\tilde{\no},\tilde{\nt})$ with their respective competition factors denoted $(S_1,S_2)$ and $(\tilde{S}_1, \tilde{S}_2)$ and defined by \eqref{eq:def_2}. Denote $h_1 = \no - \tilde{\no} $ and  $h_2 = \nt - \tilde{\nt} $.
   By multiplying the PDEs on $h_1$ and $h_2$ by $\text{sign}(h_1)$ and $\text{sign}(h_2)$ respectively, we obtain in a distributional sense,
    \begin{align*}
        &\begin{cases}
            &\frac{\partial}{\partial t}|h_1(t,a)| +\frac{\partial}{\partial a}|h_1(t,a)| + \gamtot(S_1(t), S_2(t))|h_1(t,a)| +\ks(a)|h_1(t,a)| \\
            & \qquad \leq |  \gamtot(S_1(t), S_2(t)) -  \gamtot(\tilde{S}_1(t), \tilde{S}_2(t))|\tilde{\no}(t,a)\\
            &|h_1(0,t)| \leq \int_0^{+\infty}b(a)|h_1(t,a)| \dd a + \int_0^{+\infty}\tilde{b}(a)|h_2(t,a)| \dd a\\
            & h_1(a,0) = 0.
        \end{cases}\\
        &\begin{cases}
            &\frac{\partial}{\partial t}|h_2(t,a)| +\frac{\partial}{\partial a}|h_2(t,a)| + \gamt(S_1(t), S_2(t))|h_2(t,a)| +\kd(a)|h_2(t,a)| \\
            & \qquad \leq |  \gamt(S_1(t), S_2(t)) -  \gamt(\tilde{S}_1(t), \tilde{S}_2(t))|\tilde{\nt}(t,a)\\
            &|h_2(0,t)| \leq \int_0^{+\infty}(\ks(a)+\CS(S_1(t), S_2(t)))|h_1(t,a)| \\
            & \qquad \qquad \qquad +|  \gamt(S_1(t), S_2(t)) -  \gamt(\tilde{S}_1(t), \tilde{S}_2(t))|\tilde{\no}(t,a)\dd a\\
            & h_2(a,0) = 0.
        \end{cases}
    \end{align*}
    Hence by integrating in age, with Assumptions \ref{assump:bounds}, \ref{assump:lip} and using the exponential bound given by equation \eqref{eq:exp_bound_L1}, we have 
    \begin{align*}
        &\frac{\dd}{\dd t }\int_0^{+\infty} \big(|h_1(t,a)|+ |h_2(t,a)|\big)\dd a \leq \int_0^{+\infty} \big(- \frac{\partial}{\partial a}|h_1| - \frac{\partial}{\partial a}|h_2| - \gamtot(S_1(t), S_2(t))|h_1(t,a)| \\
        &\qquad -\ks(a) |h_1(t,a)| +|  \gamtot(S_1(t), S_2(t)) -  \gamtot(\tilde{S}_1(t), \tilde{S}_2(t))|\tilde{\no}(t,a)\\
        & \qquad  - \gamt(S_1(t), S_2(t))|h_2(t,a)| + |  \gamt(S_1(t), S_2(t)) -  \gamt(\tilde{S}_1(t), \tilde{S}_2(t))|\nt^2(t,a) \\
        & \qquad - \kd(a) |h_2(t,a)| \dd a\\
        & \leq \int_0^{+\infty}b(a)|h_1(t,a)| \dd a + \int_0^{+\infty}\tilde{b}(a)|h_2(t,a)| \dd a - \int_0^{+\infty}\kd(a)|h_2(t,a)| \big)\dd a \\
        & \qquad + L(|S_1(t) - \tilde{S}_1(t)| + |S_2(t) - \tilde{S}_2(t)|)\int_0^{+\infty} \big(2\tilde{\no}(t,a) + \tilde{\nt}(t,a)\big) \dd a \\
        & \qquad - \int_0^{+\infty} \big(\gamt(S_1(t), S_2(t))|h_2(t,a)| + \gamt(S_1(t), S_2(t))|h_1(t,a)| \big)\dd a\\
        &\leq \max(||b||_{\infty},||\tilde{b}||_{\infty})\int_0^{+\infty} \big(|h_1(t,a)|+ |h_2(t,a)|\big)\dd a \\
        &\qquad + L\sup_{0\leq t\leq T}(|S_1(t) - \tilde{S}_1(t)| + |S_2(t) - \tilde{S}_2(t)|) 2e^{\max(||b||_{\infty},||\tilde{b}||_{\infty}) t}(||\no(0,.)||_{1}+||\nt(0,.)||_{1}).
    \end{align*}
    Then, by Grönwall's Lemma, we have 
    \begin{align*}
       \int_0^{+\infty} \big( |h_1(t,a)|+ |h_2(t,a)|\big)\dd a \leq& Lt 2e^{2\max(||b||_{\infty},||\tilde{b}||_{\infty}) t}(||\no(0,.)||_{1}+||\nt(0,.)||_{1})\\
       & \times \sup_{0\leq t\leq T}(|S_1(t) - \tilde{S}_1(t)| + |S_2(t) - \tilde{S}_2(t)|). 
    \end{align*}
Hence, when taking the supremum on $[0,T]$, 
    \begin{align*}
       \sup_{0\leq t \leq T}& \int_0^{+\infty} \big(|h_1(t,a)|+ |h_2(t,a)| \big)\dd a \\
       &\leq e^{||b||_{\infty}T}LT 2e^{\max(||b||_{\infty},||\tilde{b}||_{\infty}) T}(||\no(0,.)||_{1}+||\nt(0,.)||_{1}) \\
       & \qquad \qquad \qquad \times \sup_{0\leq t\leq T}(|S_1(t) - \tilde{S}_1(t)| + |S_2(t) - \tilde{S}_2(t)|).
    \end{align*}
    Finally, we have 
    \begin{align*}
       D(\mathcal{T}(S^1)&, \mathcal{T}(S^2)) = \sup_{0\leq t \leq T} \Big(\Big| \int_0^{+\infty}\psi_1(a) (\no^1(t,a) - \no^2(t,a))\dd a \Big| \\
       & \qquad \qquad \qquad  \qquad \qquad + \Big| \int_0^{+\infty}\psi_2(a) (\nt^1(t,a) - \nt^2(t,a))\dd a \Big| \Big)\\
        & \leq ||\psi_2 + \psi_1||_{\infty} \sup_{0\leq t \leq T} \Big(\int_0^{+\infty}|h_1(t,a)| \dd a + \int_0^{+\infty}|h_2(t,a)|\dd a \Big)\\
        & \leq ||\psi_2 + \psi_1||_{\infty} e^{\max(||b||_{\infty},||\tilde{b}||_{\infty})T} LT 2e^{\max(||b||_{\infty},||\tilde{b}||_{\infty}) T}(||\no(0,.)||_{1}+||\nt(0,.)||_{1})\\
        & \qquad \times \sup_{0\leq t\leq T}(|S_1(t) - \tilde{S}_1(t)| + |S_2(t) - \tilde{S}_2(t)|).
    \end{align*}
    Thus, we can choose $T$ small enough such that $\mathcal{T}$ is a contraction map, which allows us to use the Banach fixed point theorem to conclude. By iterating on intervals of the form $[iT, (i+1)T]$ for $i$ an integer, this proves the existence of solutions on $C(\R_+, L^1(\R_+))^2$
\end{proof}

\begin{proof}[Proof of Proposition \ref{prop:positivity}.]
Let $(n_1, n_2)$ be a solution to \eqref{syst:edp}, and denote  $S_1, S_2$ the corresponding environmental factors as defined by \eqref{eq:def_2}.$S_1$ and $ S_2$ are continuous on $\R_+^*$ and have a finite value at $t=0$ and are therefore locally bounded on $\R_+$. Now consider the following linear system where $S_1, S_2$ are considered as given functions of time. 
\begin{align*}
    \begin{cases}
        &\frac{\partial u_1(t,a) }{\partial t} + \frac{\partial u_2(t,a) }{\partial a} = -(\ks(a) + \CS(S_1(t),S_2(t)) + \gamo(S_1(t),S_2(t))u_1(t,a)\\
        &u_1(0,t) = \int_0^{+\infty}b(a)u_1(t,a)+\tilde{b}(a)u_2(t,a) \dd a \\
        &\frac{\partial u_2(t,a) }{\partial t} + \frac{\partial u_2(t,a) }{\partial a} = -(\kd(a) +\gamt(S_1(t),S_2(t)) )u_2(t,a)\\
        &u_2(0,t) = \int_0^{+\infty}u_1(t,a)(\ks(a)+\CS(S_1(t),S_2(t)))\dd a \\
        &u_1(a,0)= 0 \quad u_2(a,0) = 0.
    \end{cases}
\end{align*}
As the initial conditions are $0$, the solution to this system is $u_1 \equiv u_2 \equiv 0$. 
Thus, we can apply the comparison principle enunciated for the linear system in Theorem \ref{thm:exis_lin} and conclude that the solutions to \eqref{syst:edp} stay positive provided the initial conditions are positive. 

\end{proof}
\subsection{Proof of Theorem \ref{thm:stab} }
\label{sec:app_thm}
\paragraph{Case 1 $\int_0^{+\infty} b(a) e^{-\int_0^{a} \ks(u) \dd u} \dd a > 1$.}
The existence of a unique non-trivial positive steady state is given by Theorem \ref{thm:unique_2S}. We now prove its stability. Since $N_1^* = \int_0^{+\infty} n_1^*(a) \dd a > 0$ by Theorem \ref{thm:unique_2S}, we have $\omega_{ess}(\mathcal{A} + D\mathcal{F}(u^*) ) <  0$ by Theorem \ref{thm:omega_ess}. There only remains to study the eigenvalues of the linearised system around $u^* = (\no^*(.), N_2^*, 0)$.  
For ease of notation, for any $f$ in $L_1(\R_+)$ we denote $I(f) = \int_0^{+\infty} f(u) \dd u$. 
    Let $(x,y,0) \in X$ and $\lambda \in \mathbb{C}$ be such that $(\mathcal{A} + D\mathcal{F}(u^*))e^{\lambda t} (x(a),y,0)^T = 0$.  We have 
\begin{align}
   & \qquad \qquad  (\mathcal{A} + D\mathcal{F}(u^*))e^{\lambda t} \begin{pmatrix}
        x(a)\\ y \\ 0
    \end{pmatrix} = 0 \nonumber\\
    &\iff
     e^{\lambda t} \begin{pmatrix}
        -(\ks(a) + \lambda + \CtotNS N_1^* + \CtotS N_2^* ) x(a) - x'(a)\\
        -(\CDSNS N_1^* + \CDSS N_2^* + \CDSS N_2^* + \kd + \lambda) y   + \CSS  N_1^* y \\
        -x(0)
    \end{pmatrix} \nonumber \\
  + e^{\lambda t} &\begin{pmatrix}
        - (\CtotNS X + \CtotS y )\no^*(a) \\
        - \CDSNS I(x) N_2^* +  \int_0^{+\infty} (\ks(a) + \CSNS N_1^* + \CSS N_2^* ) x(a) \dd a + \CSNS I(x) N_1^*\\
        \int_0^{+\infty} b(a) x(a) \dd a
    \end{pmatrix}=0. \label{eq:diff_lambda}
\end{align}
Recall $\lambda_0 = \CtotNS N_1^* + \CtotS N_2^*$ as shown in the proof of Theorem \ref{thm:unique_2S}. 
We introduce 
\begin{align*}
&G_b(\lambda) = \int_0^{+\infty}b(a) e^{-\int_0^{a}(\ks(u) + \lambda +\lambda_0 )\dd u } \dd a \quad \text{and} \quad 
G(\lambda) = \int_0^{+\infty} e^{-\int_0^{a}(\ks(u) + \lambda + \lambda_0 )\dd u } \dd a .
\end{align*}
By solving \eqref{eq:diff_lambda}, the system verified by $\lambda$ and $(x,y)$ is
\begin{equation}\label{syst:xyI}
\begin{cases}
     &x(a) = e^{-\int_0^{a} (\ks(t) + \lambda_0 + \lambda) \dd u } (x(0) - (\CtotNS  I(x) + \CtotS y) \frac{\no^*(0) (e^{\lambda a}-1)}{\lambda})\\
    &y =  \frac{ \int_0^{+\infty} \ks(a) x(a) \dd a + 2\CSNS I(x) N_1^* +  \CSS N_2^*I(x) -\CDSNS I(x) N_2^* }{\CDSNS N_1^* + 2\CDSS N_2^*  + \kd + \lambda- \CSS N_1^* }\\
    &x(0) = x(0)G_b(\lambda) - \no^*(0)\frac{\CtotNS I(x) + \CtotS y}{\lambda}(1 - G_b(\lambda))\\
    & I(x)(1 + \frac{\CtotNS }{\lambda}(N_1^* - \no^*(0)G(\lambda))) = x(0)G(\lambda) - \frac{\CtotS  y}{\lambda}(N_1^* - \no^*(0)G(\lambda)).
    \end{cases}
\end{equation}
Furthermore, following the first equation in \eqref{syst:xyI} and by integration by parts, we have
\begin{align*}
     &\int_0^{+\infty} \ks(a) x(a) \dd a  = x(0) \int_0^{+\infty}\ks(a)  e^{-\int_0^{a}( \ks(t) + \lambda_0 + \lambda) \dd u } \dd a \\
     &   - (\CtotNS  I(x) + \CtotS y) \frac{\no^*(0) }{\lambda} \int_0^{+\infty} \!\ks(a) \Big( e^{-\int_0^{a}( \ks(t) + \lambda_0 )\dd u } \dd a \! -\!  \int_0^{+\infty} \!\ks(a)  e^{-\int_0^{a} (\ks(t) + \lambda_0 +\lambda )\dd u } \dd a \Big)\\
     & = x(0) \Big(1 - (\lambda_0 + \lambda) G(\lambda)\Big) - (\CtotNS  I(x) + \CtotS y) \frac{\no^*(0) }{\lambda} \Big(1 - \lambda_0 \frac{N_1^*}{n_1^*(0)} - (1 - (\lambda + \lambda_0) G(\lambda))\Big)\\
     & =  x(0) \Big(1 - (\lambda_0 + \lambda) G(\lambda)\Big) - (\CtotNS  I(x) + \CtotS y) \frac{1}{\lambda} \Big( - \lambda_0 N_1^*  + n_1^*(0)(\lambda + \lambda_0) G(\lambda))\Big). 
\end{align*}

Thus system \eqref{syst:xyI} can be rewritten only in terms of the three variables  $(x(0), y , I(x))$ in the following way, 
\begin{align*}
\begin{cases}
 & (1-G_b(\lambda))\Big(x(0) + \no^*(0)\frac{\CtotNS I(x) + \CtotS y}{\lambda} \Big) = 0\\
&y(\CDSNS N_1^* + 2\CDSS N_2^*  + \kd + \lambda- \CSS  N_1^*  + \frac{\CtotS}{\lambda} ((\lambda_0 +\lambda) \no^*(0)G(\lambda)- \lambda_0 N_1^*)) \\
& \qquad  - I(x)\Big(2\CSNS  N_1^* +  \CSS N_2^* -\CDSNS  N_2^* +\frac{\CtotNS}{\lambda}  ((\lambda_0 +\lambda) \no^*(0)G(\lambda)- \lambda_0 N_1^*)\Big)\\
& \qquad -  x(0) \Big(1- (\lambda+\lambda_0) G(\lambda)\Big) = 0 \\
& I(x)\Big(1 + \frac{\CtotNS }{\lambda}(N_1^* - \no^*(0)G(\lambda))\Big) - x(0)G(\lambda) + y\frac{\CtotS}{\lambda}\Big(N_1^* - \no^*(0)G(\lambda)\Big) = 0.
\end{cases}
\end{align*}

A non-trivial solution to the previous system exists iff the determinant $D$ associated to the linear system solved by $(x(0), y, I(x))$ is 0.

The determinant $D(\lambda)$ of the previous system is
\begin{align}
 D(\lambda) &= (1-G_b(\lambda))
    \begin{vmatrix}
         1 &  a_2& a_3 \\
        b_1 & b_2 & b_3\\
        c_1 & c_2 & c_3
    \end{vmatrix}
   \nonumber \\ \label{eq:det}
    & 
    = (1-G_b(\lambda)) (b_2c_3 + b_1c_2a_3+c_1a_2b_3 - a_3b_2c_1 - b_3c_2 - c_3a_2b_1).
\end{align}
Introducing the notations $b_2^*$ and $b_3^*$, the coefficients of \eqref{eq:det} can be explicited as follows,  
\begin{align*}
&a_2 := \frac{\CtotS \no^*(0)}{\lambda} \\
&a_3 := \frac{\CtotNS  \no^*(0)}{\lambda}\\
&b_1 := -1 + (\lambda_0 +\lambda) G(\lambda) \\
&b_2 := \CDSNS N_1^* + 2\CDSS N_2^*  + \kd - \CSS  N_1^* + \lambda +\frac{\CtotS }{\lambda}((\lambda_0 +\lambda) \no^*(0)G(\lambda)- \lambda_0 N_1^*)\\
& \quad = b_2^* +\lambda + (\lambda_0 +\lambda )G(\lambda) a_2 -\frac{\lambda_0 N_1^* \CtotS }{\lambda}\\
&b_3 := \CDSNS N_2^* - 2 \CSNS N_1^* - \CSS N_2^* + \frac{ \CtotNS }{\lambda}((\lambda_0 +\lambda) \no^*(0)G(\lambda)- \lambda_0 N_1^*)\\
& \quad = b_3^* +(\lambda_0+\lambda) G(\lambda) a_3 -\frac{\lambda_0 N_1^* \CtotNS }{\lambda}\\
& c_1 := -G(\lambda) \\
& c_2 := \frac{\CtotS }{\lambda}( N_1^* - \no^*(0)G(\lambda)) = \frac{\CtotS N_1^*}{\lambda} - G(\lambda)a_2\\
& c_3 := 1 + \frac{\CtotNS }{\lambda}(N_1^* - \no^*(0)G(\lambda)) = 1 + \frac{\CtotNS N_1^*}{\lambda} - G(\lambda)a_3.
\end{align*}

By expanding \eqref{eq:det} and expressing $b_2,b_3,c_2,c_3$ in terms of the other coefficients as done above, all the terms in $G(\lambda)$ and $1/\lambda^2$ cancel out and we are left with 
\begin{align*}
   D(\lambda) =  \frac{1}{\lambda}(1-G_b(\lambda))(\lambda^2 \!+ \!\lambda (b_2^* \!+ \!\CtotNS N_1^*) \!+ \!b_2^*\CtotNS  N_1^* -b_3^*\CtotS  N_1^* \!+ \!\CtotS \no^*(0) -\lambda_0\CtotS N_1^*).
\end{align*}
As $1 < \int_0^{+\infty} b(a) e^{-\int_0^{+\infty }\ks(u) \dd u } \dd a$ by Assumption \ref{ass:for_lambda_0}, $1-G_b(\lambda) \neq 0$ when $Re(\lambda) >0$, and there remains to check that the roots of the right factor second-degree polynomial in the expression of $D(\lambda)$ have negative real parts. By the Routh-Hurwitz criterion, this happens iff the three coefficients of the polynomial are strictly positive. The square coefficient is $1$. 
Furthermore, by using the properties of the steady states, we notice that 
\begin{align*}
   & b_2^* = \CDSNS N_1^* + 2\CDSS N_2^*  + \kd  - \CSS N_1^* = \frac{1}{N_2^*} \Big( \int_0^{+\infty}\ks(a)\no^*(a) \dd a  + \CSNS (N_1^*)^2 +  \CDSS (N_2^*)^2\Big),\\
    &b_3^* = \CDSNS N_2^* - 2 \CSNS N_1^* - \CSS N_2^* = \frac{1}{N_1^*}\Big(\int_0^{+\infty}\ks(a)\no^*(a) \dd a -\CDSS (N_2^*)^2 -\kd N_2^* -\CSNS(N_1^*)^2\Big).
\end{align*}
Thus, we have $b_2^* >0$, hence $b_2^* + \CtotNS N_1^* > 0$ and the linear coefficient is strictly positive. Finally, for the constant coefficient we have
\begin{align*}
    -b_3^*&\CtotS  N_1^* \!+ \!\CtotS \no^*(0) -\lambda_0\CtotS N_1^* \\
    &= -\CtotS \int_0^{+\infty}\ks(a)\no^*(a) \dd a + \CtotS  \CDSS (N_2^*)^2 + \CtotS \kd N_2^* +\CtotS  \CSNS(N_1^*)^2 \\
    & \qquad +\!\CtotS \no^*(0) -\lambda_0\CtotS N_1^*\\
    & =  \CtotS  \CDSS (N_2^*)^2 + \CtotS \kd N_2^* +\CtotS  \CSNS(N_1^*)^2 \\
    & \qquad +\!\CtotS \no^*(0)\Big(1 - \int_0^{+\infty}( \lambda_0+\ks(a))e^{-\int_0^{a}\ks(u) \dd u - \lambda_0 a} \dd a\Big)\\
    & =  \CtotS  \CDSS (N_2^*)^2 + \CtotS \kd N_2^* +\CtotS  \CSNS(N_1^*)^2  \geq  0. 
\end{align*}
Hence, the constant coefficient is strictly positive. 

Thus, all the roots have strictly negative real parts, and the positive stationary state is stable. 

\paragraph{Case 2 $\int_0^{+\infty} b(a) e^{-\int_0^{a} \ks(u) \dd u} \dd a < 1$.}

We state the equation verified by a potential eigenvalue of $\mathcal{A} + D\mathcal{F}(0) $,  $\lambda$. 
    We have for $(x,y,0) \in X$ and $\lambda \in \mathbb{C}$, 
    \begin{equation*}
        (\mathcal{A} + D\mathcal{F}(0))\begin{pmatrix}
            x(a) e^{\lambda t}\\
            y e^{\lambda t}\\
            0 
        \end{pmatrix} = e^{\lambda t} \begin{pmatrix}
            -(\ks(a) + \lambda)x(a) -x'(a) \\
            -(\kd + \lambda)y +  \int_0^{+\infty}\ks(a) x(a) \dd a\\
            -x(0)+ \int_0^{+\infty}b(a) x(a) \dd a. 
        \end{pmatrix}
    \end{equation*}
Thus, the equation $(\mathcal{A} + D\mathcal{F}(0))Y= 0, Y \in X $ has a non trivial solution $(x(.)e^{\lambda t},ye^{\lambda t} ,0)$ iff 
\begin{equation*}
    \begin{cases}
        &x(a) = x(0)e^{-\int_0^{a}(\ks(u) + \lambda) \dd u}\\
        &y = \frac{x(0)}{\kd + \lambda} (1- \lambda \int_0^{+\infty} e^{-\int_0^{a}(\ks(u) + \lambda) \dd u} \dd a)\\
        & 1 =  \int_0^{+\infty} b(a) e^{-\int_0^{a}(\ks(u) + \lambda )\dd u} \dd a. 
    \end{cases}
\end{equation*}
If $1 < \int_0^{+\infty} b(a) e^{-\int_0^{+\infty }\ks(u) \dd u } \dd a$ and $Re(\lambda) \geq  0$, the last condition yields  
\begin{align*}
    1 = \Big|\int_0^{+\infty} b(a) e^{-\int_0^{a}(\ks(u) + Re(\lambda) )\dd u} cos(Im(\lambda) a) \dd a\Big| \leq \int_0^{+\infty} b(a) e^{-\int_0^{a}(\ks(u) + Re(\lambda)) \dd u} \dd a < 1
\end{align*}
 which is a contradiction, hence $Re(\lambda) < 0$.    
 Similarly, $1 > \int_0^{+\infty} b(a) e^{-\int_0^{+\infty }\ks(u) \dd u } \dd a$ implies $Re(\lambda) > 0$. Finally, Theorems \ref{thm:eigen_2table} and \ref{thm:omega_ess} allow us to conclude.

\subsection{Technical lemmas}

We present here two technical results on ordinary differential equations. .
\begin{lemma} \label{lemma:eq_diff_0}
     Let $f:\R_+ \to \R_+$ be a function such that $\exists \varepsilon : \R_+ \to \R, \forall t \geq 0$, 
    \begin{equation*}
        \frac{\dd f(t)}{\dd t} \leq -\alpha f(t) + \varepsilon(t)
    \end{equation*}
    where $\alpha >0$ and $\varepsilon(t) \to 0$ as $t \to + \infty$. 
    Then, $f(t) \to 0$ as $t \to \infty$. 
\end{lemma}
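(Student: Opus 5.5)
The plan is a variation-of-constants argument (Grönwall) that handles the forcing term $\varepsilon$ by splitting the time integral at a large intermediate time. I will assume $f$ is absolutely continuous, so that the differential inequality holds almost everywhere. I will also assume $\varepsilon$ is locally integrable and bounded on compact sets, which holds in every application in the paper.

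\textbf{Step 1: integrating factor.} Multiplying the inequality by $e^{\alpha t}$ gives $\frac{\dd}{\dd t}\big(e^{\alpha t} f(t)\big) \leq e^{\alpha t}\varepsilon(t)$. Integrating between $s$ and $t$ yields, for $t \geq s \geq 0$,
\begin{equation*}
f(t) \leq e^{-\alpha (t-s)} f(s) + \int_s^t e^{-\alpha(t-u)} \varepsilon(u) \dd u .
\end{equation*}

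\textbf{Step 2: fix a starting time and bound the forcing.} Fix $\delta>0$. Since $\varepsilon(t)\to 0$, there is a time $T_\delta$ such that $\varepsilon(u) \leq \alpha\delta/2$ for all $u\geq T_\delta$. Applying Step 1 with $s=T_\delta$ gives
\begin{equation*}
f(t) \leq e^{-\alpha(t-T_\delta)} f(T_\delta) + \frac{\alpha\delta}{2}\int_{T_\delta}^t e^{-\alpha(t-u)}\dd u \leq e^{-\alpha(t-T_\delta)} f(T_\delta) + \frac{\delta}{2}.
\end{equation*}
Here $f(T_\delta)$ is finite because $f$ is real-valued.

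\textbf{Step 3: conclude.} The first term on the right tends to $0$ as $t \to +\infty$. Hence $f(t) \leq \delta$ for all $t$ large enough. Since $f \geq 0$ and $\delta>0$ was arbitrary, $f(t)\to 0$.

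The only delicate point is regularity. In the application (Proposition \ref{prop:conv_pop_2ize}), $f$ is a weighted sum of $(N_1-N_1^*)^2$ and $(N_2-N_2^*)^2$. Also, $\varepsilon$ collects the terms involving $\lambda(n_1,t)-\lambda_0$ and $\kappa(n_1,t)-\kappa_0$ multiplied by bounded quantities, where the bounds come from Proposition \ref{prop:bound_Pop_2ize}. I need $f$ to be differentiable almost everywhere and equal to the integral of its derivative. This follows because $N_1$ and $N_2$ satisfy integral equations with bounded integrands. Beyond checking this, I expect no real obstacle.
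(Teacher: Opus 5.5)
Your proof is correct and takes essentially the same route as the paper: the paper also uses the variation-of-constants formula (it compares $f$ with the solution $f^0$ of the equality ODE) and controls the forcing by splitting the Duhamel integral, only it splits at $t/2$ rather than integrating from a fixed large time $T_\delta$. Integrating from $T_\delta$ is slightly cleaner, because it needs no global bound on $\varepsilon$ (the paper uses $\|\varepsilon\|_{\infty}$) and it correctly keeps the factor $1/\alpha$ coming from $\int e^{-\alpha(t-u)}\,\dd u$, which the paper's displayed estimates omit.
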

\begin{proof}
Let $f^0$ be the function such that,
\begin{equation*}
\begin{cases}
        &\frac{\dd f^0(t)}{\dd t} =  -\alpha f^0(t) + \varepsilon(t) \qquad  \forall t \geq 0\\
        &f^0(0) = f(0).
        \end{cases}
\end{equation*}       
We have for $t \geq 0$, 
\begin{align*}
    f^0(t) &= e^{-\alpha t}\Big( f(0) + \int_0^t \varepsilon(u) e^{\alpha u} \dd u\Big)\\
    & = e^{-\alpha t}\Big( f(0) + \int_{t/2}^t \varepsilon(u) e^{\alpha u} \dd u + \int_{0}^{t/2} \varepsilon(u) e^{\alpha u} \dd u\Big)\\
    & \leq e^{-\alpha t}\Big( f(0) + \sup_{u \in [t/2,t]} (\varepsilon(u)) e^{\alpha t}  + ||\varepsilon||_{\infty} e^{\alpha t/2} \Big)\\
    & \leq e^{-\alpha t} f(0) + \sup_{u \in [t/2,t]} (\varepsilon(u))   + ||\varepsilon||_{\infty} e^{-\alpha t/2} \xrightarrow[t \to + \infty]{} 0.
\end{align*}
And $0 \leq f \leq f^0$ hence $f(t) \to 0 $ as $t \to \infty$. 
\end{proof}

\begin{lemma} \label{lemma:eq_diff}
     Let $f:\R_+ \to \R$ be a function such that there exists a locally Lipschitz function $u$ such that $u(x) = 0$ has a unique non trivial solution $x^* > 0$ and such that $u(x) \xrightarrow[x \to + \infty]{} -\infty$ (resp such that $u \geq 0$ on $[0,x^*]$) . 
    \begin{equation*}
        \frac{\dd f(t)}{\dd t} \leq u(f(t)) (\text{resp.} \geq).
    \end{equation*}
    Then, for all $t \in \R_+$, $f(t) \leq \max(f(0), x^*)$ (resp. $\geq \min(f(0), x^*)$).  
\end{lemma}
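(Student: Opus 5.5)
This is a one-dimensional comparison argument. I treat the upper-bound case (the lower-bound case is symmetric). I set $m := \max(f(0), x^*)$ and argue by contradiction. Suppose there is $t_1 > 0$ with $f(t_1) > m$. Put $t_0 := \sup\{ t \le t_1 : f(t) \le m\}$. Then $f(t_0) = m$ by continuity, since $f(0) \le m$ so the set is non-empty, and $f > m$ on $(t_0, t_1]$. Here I take $f$ absolutely continuous, which is how the lemma is used in the paper, where $f$ is $N$, $N_1$, $N_2$ or $N_0$.

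The key structural input is the sign of $u$ above $x^*$. Since $u$ is continuous, has $x^*$ as its only positive zero, and $u(x) \to -\infty$, we get $u < 0$ on $(x^*, +\infty)$. Indeed, if $u$ were positive somewhere beyond $x^*$, the intermediate value theorem would produce a second positive zero.

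\textbf{Case $m > x^*$.} On $(t_0, t_1]$ we have $f(t) > m > x^*$, so $f'(t) \le u(f(t)) < 0$ almost everywhere. Integrating gives $f(t_1) \le f(t_0) = m$, a contradiction.

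\textbf{Case $m = x^*$.} This is the delicate step, because $u(x^*) = 0$ gives no strict decrease at the crossing point. Here I use the local Lipschitz property of $u$. Let $g := f - x^*$. On $(t_0, t_1]$ we have $g > 0$ and $u(f) \le 0$, so $g' \le 0$ almost everywhere. Together with $g(t_0) = 0$, this gives $g(t_1) \le 0$, which is a contradiction. An alternative is Grönwall: $g' \le u(x^* + g) - u(x^*) \le L|g| = Lg$, with $g(t_0) = 0$, forces $g \equiv 0$. In fact only the sign of $u$ is needed, and Lipschitz continuity merely guarantees well-posedness of the comparison ODE.

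\textbf{Lower bound.} Let $m' := \min(f(0), x^*)$ and suppose $f(t_1) < m'$. Take $t_0 := \sup\{t \le t_1 : f(t) \ge m'\}$. On $(t_0, t_1]$ we have $f \in [0, x^*]$, using nonnegativity of $f$ in the applications. The hypothesis $u \ge 0$ on $[0, x^*]$ then gives $f' \ge 0$ there, so $f(t_1) \ge f(t_0) = m'$, a contradiction.

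The main obstacle is the case where $f$ touches exactly $x^*$, or starts at $x^*$, so that strict monotonicity is unavailable. The sup-argument combined with the sign of $u$ resolves it. The only subtlety is justifying integration of $f' \le u(f)$ on the interval, which holds for absolutely continuous $f$.
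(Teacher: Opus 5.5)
Your proof is correct, but at the key step it takes a different route from the paper. Both arguments start from the last time $f$ sits at the threshold level $\ell$ ($\ell=\max(f(0),x^*)$, resp.\ $\min(f(0),x^*)$) before an excursion beyond it. The paper then bounds $g'\le u(f)-u(\ell)\le Lg$ for $g=\pm(f-\ell)$ and concludes by Gr\"onwall. It therefore uses only the sign of $u$ at the single point $\ell$, together with the local Lipschitz bound around $\ell$. You instead use the sign of $u$ on the whole excursion region and conclude by plain monotonicity.

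For the upper bound your route is cleaner and more general. Obtaining $u<0$ on $(x^*,+\infty)$ from continuity and the intermediate value theorem shows the Lipschitz hypothesis is not needed there. Your cases $m>x^*$ and $m=x^*$ are also really one case: on $(t_0,t_1]$ one has $f>m\ge x^*$, hence $u(f)<0$, so nothing delicate happens at the contact point.

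For the lower bound the trade reverses. You need $f\ge 0$ along the whole trajectory, which is not a hypothesis of the lemma. The Gr\"onwall route needs only $f(0)\ge 0$, a condition both proofs tacitly require, since the claim fails otherwise (take $u(x)=x(1-x)$, $f'=u(f)$, $f(0)=-1$). If $\min(f(0),x^*)>0$, you can localise your argument near $t_0$, where $f$ stays positive by continuity, and drop the extra assumption. But when $f(0)=0$, your nonnegativity assumption does exactly the job of the Lipschitz hypothesis. For instance, take $u(x)=x(1-x)$ for $x\ge 0$ and $u(x)=-\sqrt{|x|}$ for $x<0$: then $f(t)=-t^2/4$ satisfies $f'=u(f)$ and $f(0)=0$, yet is negative for $t>0$.

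Since every application of the lemma in the paper concerns quantities already known to be nonnegative, your version suffices in context.
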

\begin{proof}
Suppose $f(0) < x^*$. We introduce $g = f-x^*$. By contradiction, we assume that there is a point $c>0$ such that $g(c) >0$. By continuity, and since $g(0) < 0 $, there is a point $b\in [0,c)$ such that $g(b) = 0$ and $g(t) > 0$ for $t \in (b,c]$. Thus, on $(b,c]$, we have 
\begin{align*}
   g'(t) \leq u(f(t)) \leq u(f(t)) - u(x^*) \leq L|f(t) - x^*| = L g(t)
\end{align*}
where $L$ is the local Lipschitz constant of $u$. By Grönwall, this entails $g(t) \leq 0 $ on $(b,c]$ which is a contradiction. 

We now suppose that $f(0) \geq x^*$. Since $x^*$ is the only strictly positive solution to $u(x) = 0$ and $u(x) \xrightarrow[x \to + \infty]{} -\infty$, we have $u(f(0)) < 0$. Similarly as before, We introduce $g = f-f(0)$ we assume that there is a point $c>0$ such that $g(c) >0$. By continuity, and since $g(0) = 0 $, there is a point $b\in [0,c)$ such that $g(b) = 0$ and $g(t) > 0$ for $t \in (b,c]$. Thus, on $(b,c]$, we have 
\begin{align*}
   g'(t) \leq u(f(t)) \leq u(f(t)) - u(f(0)) \leq L|f(t) - f(0)| = L g(t)
\end{align*}
where $L$ is the local Lipschitz constant of $u$. By Grönwall, this entails $g(t) \leq 0 $ on $(b,c]$ which is once again a contradiction. 

The case with the opposite inequality can be treated similarly by studying the function  $g := x^* -f$ when $f(0) > x^*$ and  $g := f(0) -f$ when  $0 \leq f(0) \leq x^*$, in which wase $u(f(0)) \geq 0$ by assumption. 
\end{proof}

\end{document}